\documentclass[final,twoside]{siamart1116}


\usepackage{graphics} 
\usepackage{amsmath} 
\usepackage{amssymb}  

\usepackage{listings}
\usepackage{enumerate}

\usepackage{booktabs}

\usepackage{tikz}
\usetikzlibrary{arrows}



\usepackage{algorithm}
\usepackage{algorithmic}

\usepackage{flushend}

\usepackage{url}

\usepackage[noadjust]{cite}


\usepackage[caption=false]{subfig}


\numberwithin{theorem}{section}
\numberwithin{equation}{section}

\newenvironment{example}{\refstepcounter{theorem} {\em Example} \thetheorem.}{}
\newenvironment{remark}{\refstepcounter{theorem} {\em Remark} \thetheorem.}{}


\newcommand{\derivd}{d}
\newcommand{\dmuh}{\derivd \nu}
\newcommand{\dmu}{\derivd \mu}
\newcommand{\dm}{\derivd m}
\newcommand{\dmuhat}{\derivd \hat{\mu}}

\DeclareMathOperator*{\argmin}{arg\,min}

\DeclareMathOperator*{\supp}{supp}

\DeclareMathOperator*{\st}{subject\;to}

\newcommand{\mZ}{\mathbb{Z}}

\newcommand{\mD}{\mathbb{D}}
\newcommand{\mJ}{\mathbb{J}}
\newcommand{\mI}{\mathbb{I}}
\newcommand{\mR}{\mathbb{R}}
\newcommand{\mT}{\mathbb{T}}

\newcommand{\fC}{\mathfrak{C}}

\newcommand{\PP}{\mathfrak{P}_+}
\newcommand{\PPC}{\bar{\mathfrak{P}}_+}

\newcommand{\CP}{\mathfrak{C}_+}
\newcommand{\CPC}{\bar{\mathfrak{C}}_+}
\newcommand{\SW}{\mathfrak{S}_W}
\newcommand{\ScW}{\mathfrak{S}_{c,W}}

\newcommand{\thetab}{{\boldsymbol \theta}}
\newcommand{\kb}{{\boldsymbol k}}
\newcommand{\nollb}{{\boldsymbol 0}}

\newcommand{\expfunt}{e^{i(\tb,\thetab)}}
\newcommand{\expfunk}{e^{i(\kb,\thetab)}}
\newcommand{\expfunkm}{e^{-i(\kb,\thetab)}}
\newcommand{\tb}{{\boldsymbol t}}

\newcommand{\kl}{\mathbb D}

\DeclareMathOperator*{\ExpOp}{\mathbb{E}}

\makeatletter
\def\iddots{\mathinner{\mkern1mu\raise\p@
\vbox{\kern7\p@\hbox{.}}\mkern2mu
\raise4\p@\hbox{.}\mkern2mu\raise7\p@\hbox{.}\mkern1mu}}
\makeatother


\def \myFigPath {figures/}

\def \spect_est_fig {spect_est/}

\def \textureOne {"text_gen/granite001-inca-100dpi-00_reg_0point01/"}
\def \textureTwo {"text_gen/paper010-inca-100dpi-00_reg_0point01/"}
\def \textureThree {"text_gen/plastic008-inca-100dpi-00_reg_0point01/"}


\hyphenation{op-tical net-works semi-conduc-tor}

\title{
Multidimensional Rational Covariance Extension\\ with Approximate Covariance Matching%
\thanks{This work was supported by the Swedish Research Council (VR), the Swedish Foundation of Strategic Research (SSF), and the ACCESS Linnaeus Center, KTH  Royal Institute of Technology.}
} 

\author{Axel Ringh\footnotemark[2] \and Johan Karlsson\footnotemark[2] \and Anders Lindquist\footnotemark[3] \footnotemark[2]}


\newcommand{\TheTitle}{Approximate Multidimensional Rational Covariance Extension} 
\newcommand{\TheAuthors}{A.~Ringh, J.~Karlsson, and A.~Lindquist}

\headers{\TheTitle}{\TheAuthors}


\begin{document}

\maketitle
\slugger{sicon}{xxxx}{xx}{x}{x--x}

\renewcommand{\thefootnote}{\fnsymbol{footnote}}
\footnotetext[2]{Division of Optimization and Systems Theory, Department of Mathematics, KTH  Royal Institute of Technology, 100 44 Stockholm, Sweden. (\email{aringh@kth.se}, \email{johan.karlsson@math.kth.se})}
\footnotetext[3]{Department of Automation and School of Mathematics, Shanghai Jiao Tong University,
200240 Shanghai, China. (\email{alq@kth.se})}

\renewcommand*{\thefootnote}{\arabic{footnote}}
\setcounter{footnote}{0}

\begin{abstract}
In our companion paper \cite{ringh2015multidimensional} we discussed the multidimensional rational covariance extension problem (RCEP), which has important applications in image processing, and spectral estimation in radar, sonar, and medical imaging. This is an inverse problem where a power spectrum with a rational absolutely continuous part  is reconstructed from a finite set of moments. However, in most applications these moments are determined from observed data and are therefore only approximate, and RCEP may not have a solution. In this paper we extend the results \cite{ringh2015multidimensional} to handle approximate covariance matching. We consider two problems, one with a soft constraint and the other one with a hard constraint, and show that they are connected via a homeomorphism. We also demonstrate that the problems are well-posed and illustrate the theory by examples in spectral estimation and texture generation.
%
%
%
%
%
\end{abstract}

\begin{keywords}Approximate covariance extension, trigonometric moment problem, convex optimization, multidimensional spectral estimation, texture generation.\end{keywords}


\section{Introduction}\label{sec:intro}

Trigonometric moment problems are ubiquitous in systems and control, such as spectral estimation, signal processing, system identification, image processing and remote sensing \cite{bose2003multidimensional,ekstrom1984digital,stoica1997introduction}. In the  (truncated) multidimensional trigonometric moment problem we seek a nonnegative measure $\dmu$ on $\mT^d$ satisfying  the moment equation 
\begin{equation} \label{eq:Cov}
c_\kb = \int_{\mT^d} \expfunk \dmu(\thetab) \quad\text{for all $\kb\in\Lambda$},
\end{equation}
where $\mT:=(-\pi,\pi]$, $\thetab:=(\theta_1,\ldots, \theta_d)\in\mT^d$, and $(\kb,\thetab):=\sum_{j=1}^d k_j\theta_j$ is the scalar product in $\mR^d$. Here $\Lambda \subset \mathbb{Z}^d$ is a finite index set satisfying $0 \in \Lambda$ and $- \Lambda = \Lambda$. A necessary condition for \eqref{eq:Cov} to have a solution is that the sequence 
\begin{equation}
\label{eq:covariances}
c:= [ c_\kb \mid \kb:=(k_1,\ldots, k_d) \in \Lambda ]
\end{equation}
satisfy the symmetry condition $c_{-\kb}=\bar{c}_\kb$. The space of sequences \eqref{eq:covariances} with this symmerty  will be denoted $\fC$ and will be represented by vectors  $c$, formed by ordering the coefficient in some prescribed manner, e.g., lexiographical. Note that $\fC$ is isomorphic to $\mR^{|\Lambda|}$, where $|\Lambda|$ is the cardinality of $\Lambda$. 
However, as we shall see below, not all $c\in\fC$ are {\em bona fide\/} moments for nonnegative measures $\dmu$. 

In many  of the applications mentioned above there is a natural complexity constraint prescribed by design specifications. In the context of finite-dimensional systems these constraints often arise in the requirement that transfer functions be rational. This leads to the {\em rational covariance extension problem}, which  has been  studied in various degrees of generality in \cite{georgiou2005solution,georgiou2006relative,karlsson2015themultidimensional,ringh2015themultidimensional,ringh2015multidimensional} and can be posed as follows. 

Define $e^{i\thetab}:=(e^{i\theta_1},\ldots, e^{i\theta_d})$ and let
\begin{subequations}\label{eq:ratinaldmu}
\begin{equation}\label{eq:decomp}
\dmu(\thetab) = \Phi(e^{i\thetab})\dm(\thetab) + \dmuh(\thetab),
\end{equation}
be the (unique) Lebesgue decomposition of $d\mu$  (see, e.g., \cite[p. 121]{rudin1987real}), 
where $$dm (\thetab):=(1/2\pi)^d\prod_{j=1}^d d\theta_j$$ is the (normalized)
Lebesgue measure and $\dmuh$ is a singular measure. Then given a $c\in\fC$, we are interested in parameterizing solutions to \eqref{eq:Cov} such that the absolutely continuous part of the measure \eqref{eq:decomp}  takes the form
\begin{equation}\label{eq:rat}
\Phi(e^{i\thetab})=\frac{P(e^{i\thetab})}{Q(e^{i\thetab})},\quad p, q \in \PPC \backslash \{0\},
\end{equation}
\end{subequations}
where $\PPC$ is the closure of the convex cone $\PP$ of the coefficients $p\in\fC$ corresponding to trigonometric polynomials
\begin{equation}\label{Ptrigpol}
P(e^{i\thetab})=
\sum_{\kb\in \Lambda}p_\kb e^{-i(\kb,\thetab)}, \quad p_{-\kb}=\bar{p}_\kb
\end{equation}
that are positive for all $\thetab\in \mT^d$. 

The reason for referring to this problem as a rational covariance extension problem is that the numbers \eqref{eq:covariances} correspond to covariances $c_\kb:= \ExpOp \{y({\bf t}+\kb)\overline{y({\bf t})}\}$ of a  discrete-time, zero-mean, and homogeneous\footnote{Homogeneity  generalizes stationarity in the case $d=1$. }  stochastic process  $\{y({\bf t});\,{\bf t}\in\mZ^d\}$. The corresponding power spectrum, representing the energy distribution across frequencies, is defined as the nonnegative measure $d\mu$ on $\mT^d$ whose Fourier coefficients are the covariances \eqref{eq:covariances}.
A scalar version of this problem ($d=1$) was first posed by Kalman \cite{kalman1981realization} and has been extensively studied and solved in the literature \cite{georgiou1983partial, byrnes1995acomplete, byrnes2002identifyability, enqvist2004aconvex, nurdin2006new, CarliGeorgiou2011, lindquist2013thecirculant, byrnes2000anewapproach, zorzi2014rational, musicus1985maximum}.
It has been generalized to more general scalar moment problems \cite{byrnes2001ageneralized, georgio2003kullback, byrnes2006generalizedinterpolation} and to the multidimensional setting \cite{georgiou2006relative, georgiou2005solution, ringh2015multidimensional, ringh2015themultidimensional, karlsson2015themultidimensional}. Also worth mentioning here is work by Lang and McClellan \cite{lang1982multidimensional, lang1983spectral, mcclellan1982multi-dimensional, mcclellan1983duality, lang1982theextension, lang1981spectral} considering the multidimensional maximum entropy problem, which hence has certain overlap with the above literature.

The multidimensional rational covariance extension problem posed above has a solution if and only if $c\in\CP$, where $\CP$ is the open convex cone 
\begin{equation*}
\CP := \left\{ c \mid \langle c, p \rangle > 0, \quad \text{for all $p \in \PPC \setminus \{0\}$} \right\},
\end{equation*}
where $\langle c, p \rangle := \sum_{\kb \in \Lambda} c_\kb \bar{p}_\kb$ is the inner product in $\fC$ (Theorem~\ref{thm:rational}).  However, the covariances $[ c_\kb \mid \kb:=(k_1,\ldots, k_d) \in \Lambda]$ are generally determined from statistical  data. Therefore the condition $c\in\CP$ may not be satisfied, and testing this condition is difficult in the multidimensional case. 
Therefore we may want to find a positive measure $\dmu$ and  a corresponding $r\in\CP$, namely
\begin{equation}
\label{eq:r}
r_\kb = \int_{\mT^d} \expfunk \dmu(\thetab), \quad \kb\in\Lambda,
\end{equation}
so that $r$ is close  to $c$ in some norm, e.g.,  the Euclidian norm $\|\centerdot\|_2$. This is an ill-posed inverse problem which in general has an infinite number of  solutions $\dmu$. As we already mentioned, we are interested in rational solutions  \eqref{eq:ratinaldmu}, and to obtain such solutions we use  regularization as in \cite{ringh2015multidimensional}. Hence, we seek a $d\mu$  that minimizes
\begin{displaymath}
\lambda  \kl (P\dm, \dmu)+\frac12\|r-c\|_2^2
\end{displaymath}
subject to \eqref{eq:r},
where $\lambda\in\mR$ is a regularization parameter and
\begin{equation}\label{eq:normalizedKullbackLeibler}
 \kl (P\dm, \dmu) :=  \int_{\mT^d}  \left(  P \log\frac{P}{\Phi} \dm +d\mu -Pdm \right)
\end{equation}
is the nomalized Kullback-Leibler divergence \cite[ch. 4]{gzyl1995method} \cite{csiszar1991why, zorzi2014rational}. As will be explained in Section~\ref{sec:prel}, $ \kl (P\dm, \dmu)$ is always nonnegative and has the property $\kl (P\dm,P\dm)=0$. 

In this paper we shall consider a more general problem in the spirit of \cite{enqvist2007approximative}. To this end, for any Hermitian, positive definite matrix $M$,  we define the weighted vector norm $\|x\|_M:=(x^*Mx)^{1/2}$ and consider the problem
\begin{align}\label{eq:primal_relax}
\min_{\dmu \geq 0, \, r}  \qquad & \kl (P\dm, \dmu) + \frac{1}{2}\|r - c\|_{W^{-1}}^2 \\
\st  \quad &r_\kb = \int_{\mT^d} \expfunk \dmu(\thetab), \quad \kb\in\Lambda, \nonumber
\end{align}
which is the same as the problem above with $W=\lambda I$. We shall refer to $W$ as the {\em weight matrix}.

Using the same principle as in \cite{schott1984maximum}, we shall also consider the problem to minimize $\kl (P\dm, \dmu)$ subject to \eqref{eq:r} and the hard constraint 
\begin{equation}
\label{eq:hardconstraint}
\|r - c \|^2 \le \lambda \,.
\end{equation}
Since \eqref{eq:r} are {\em bona fide\/} moments and hence $r\in\CP$, while $c\not\in\CP$ in general, this problem will not have a solution if the distance from $c$ to $\CP$ is greater than $\sqrt{\lambda}$. Hence the choice of $\lambda$ must be made with some care. Analogously with the {\em rational covariance extension with soft  constraints\/} in \eqref{eq:primal_relax}, we shall consider the more general problem 
\begin{align}\label{eq:primal_relax_new}
\min_{\dmu \geq 0, \, r}  \qquad &\kl (P\dm, \dmu)  \\
\st  \quad &r_\kb = \int_{\mT^d} \expfunk \dmu(\thetab), \quad \kb\in\Lambda, \nonumber \\
     \quad &\|r - c \|_{W^{-1}}^2 \le 1, \nonumber
\end{align}
to which we shall refer as the {\em rational covariance extension problem with hard constraints}. Again this problem reduces to the simpler problem by setting $W=\lambda I$.

As we shall see, the soft-constrained problem \eqref{eq:primal_relax} always has a solution, while the hard-constrained problem  \eqref{eq:primal_relax_new} may fail to have a solution for some weight matrices $W$. However, in Section~\ref{sec:homeomorphism} we show that the two problems are in fact equivalent in the sense that whenever \eqref{eq:primal_relax_new} has a solution there is a corresponding $W$ in \eqref{eq:primal_relax} that gives the same solution, and any solution of \eqref{eq:primal_relax} can also be obtained from \eqref{eq:primal_relax_new} by a suitable choice of $W$. The reason for considering both formulations is that  one formulation might be more suitable than the other for the particular application at hand. For example, an absolute error estimate for the covariances is more naturally incorporated in the formulation with hard constraints. A possible choice of the weight matrix $W$ in either formulation would be the covariance matrix of the estimated moments, as suggested in \cite{enqvist2007approximative}. This corresponds to the Mahalanobis distance  and could be a natural way to incorporate uncertainty of the covariance estimates in the spectral estimation procedure.

Previous work in this direction can be found in \cite{shankwitz1990onthe, enqvist2007approximative, byrnes2003theuncertain, schott1984maximum, karlsson2016confidence}, 
where \cite{shankwitz1990onthe,karlsson2016confidence,byrnes2003theuncertain} consider the problem of selecting an appropriate covariances sequence to match in a given confidence region. The two approximation problems considered here are similar to the ones considered in \cite{schott1984maximum} and \cite{enqvist2007approximative}. (For more details, also see \cite[Ch. B]{avventi2011spectral}.)

We begin in Section~\ref{sec:prel}  by reviewing the regular multidimensional rational covariance extension problem for exact covariance matching  in a broader perspective. In Section~\ref{sec:approxsoft} we present our main results on approximate rational covariance extension with soft  constraints, and in Section~\ref{sec:wellposed} we show that the dual solution is well-posed. In Section~\ref{sec:no_singular_part} we investigate conditions under which there are solutions without a singular part. The approximate rational covariance extension with hard  constraints is considered in 
Section~\ref{sec:approxhard}, and in Section~\ref{sec:homeomorphism} we establish a homeomorhism between the weight matrices in the two problems, showing that the problems are actually equivalent when solutions exist. We also show that under certain conditions the homeomorphism can be extended to hold between all sets of parameters, allowing us to carry over results from the soft-constrained setting to the hard-constrained one.  In Section~\ref{sec:covest} we discuss the properties of various covariance estimators, in Section~\ref{sec:example} we give a 2D example from spectral estimation, and in Section~\ref{sec:ex_texture}  we apply our theory to system identification and texture reconstruction. 
Some of the results of this paper were announced in \cite{ringh2016multidimensional} without proofs.


\section{Rational covariance extension with exact matching}\label{sec:prel}

The trigonometric moment problem to determine a positive measure $\dmu$ satisfying \eqref{eq:Cov} is an inverse problem that has a solution if and only if $c\in\CPC$ \cite[Theorem 2.3]{karlsson2015themultidimensional}, where $\CPC$ is the closure of $\CP$, and then in general it has infinitely many solutions. However, the nature of possible rational solutions \eqref{eq:ratinaldmu} will depend on the location of $c$ in $\CPC$. To clarify this point we need the following lemma.

\begin{lemma}\label{lem:PsubsetC}
$\PPC \setminus \{0\} \subset \CP$.
\end{lemma}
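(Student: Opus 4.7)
The goal is to show that every nonzero element $p\in\PPC$ satisfies $\langle p,q\rangle>0$ for all $q\in\PPC\setminus\{0\}$. My plan is to rewrite the inner product as an integral of a product of nonnegative trigonometric polynomials on $\mT^d$, and then argue that this integral must be strictly positive.

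First, I would fix $p,q\in\PPC\setminus\{0\}$ with associated polynomials $P$ and $Q$ as in \eqref{Ptrigpol}. Using the conjugate-symmetry $p_{-\kb}=\bar p_\kb$ and $q_{-\kb}=\bar q_\kb$, both $P$ and $Q$ are real-valued, and a direct Parseval-type calculation on $\mT^d$ gives
\begin{equation*}
\langle p,q\rangle=\sum_{\kb\in\Lambda}p_\kb\bar q_\kb=\int_{\mT^d}P(e^{i\thetab})\,\overline{Q(e^{i\thetab})}\,\dm(\thetab)=\int_{\mT^d}P(e^{i\thetab})Q(e^{i\thetab})\,\dm(\thetab),
\end{equation*}
which is the reduction I will work with.

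Next, I would observe that membership in $\PPC=\overline{\PP}$ makes $P$ and $Q$ pointwise nonnegative on $\mT^d$, as pointwise limits of strictly positive trigonometric polynomials. The hypothesis that $p$ and $q$ are nonzero in $\fC$ translates, by linear independence of the exponentials $\{e^{-i(\kb,\thetab)}\}_{\kb\in\Lambda}$, into $P\not\equiv 0$ and $Q\not\equiv 0$.

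The main technical point is to conclude that $\int_{\mT^d}PQ\,\dm>0$ from the fact that $P,Q\ge 0$ are nonzero. For this, I would use that any nonzero trigonometric polynomial is real-analytic on $\mT^d$, so its zero set has Lebesgue measure zero. Hence $\{P>0\}$ and $\{Q>0\}$ each have full measure on $\mT^d$, and so does their intersection, on which $PQ>0$. This forces $\int_{\mT^d}PQ\,\dm>0$, giving $\langle p,q\rangle>0$. Since $q$ was arbitrary in $\PPC\setminus\{0\}$, we obtain $p\in\CP$, as required. The step I expect to demand the most care is the a.e.-positivity argument, since in dimension $d\ge 2$ there is no Fejér–Riesz-type factorization to lean on; the real-analytic zero-set fact is the cleanest way I see to handle it in the multidimensional setting.
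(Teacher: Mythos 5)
Your argument is correct and follows essentially the same route as the paper's proof: both reduce $\langle p,q\rangle$ to $\int_{\mT^d}PQ\,\dm$ and conclude positivity from the fact that the nonnegative polynomials $P$ and $Q$ can vanish only on a set of measure zero. Your real-analyticity justification of that zero-set fact is a welcome elaboration of a step the paper states without proof, but it does not change the approach.
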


\begin{proof}
Obviously the inner product $\langle q, p \rangle := \sum_{\kb \in \Lambda} q_\kb \bar{p}_\kb$ can be expressed in the integral form
\begin{equation}
\label{innerprod}
\langle q, p \rangle = \int_{\mT^d} Q(e^{i\thetab})\overline{P(e^{i\thetab})}\dm(\thetab),
\end{equation}
and therefore $\langle q,p\rangle >0$ for all $q,p\in\PPC \setminus \{0\}$, as $P$ and $Q$ can have zeros only on sets of measure zero. Hence the statement of the lemma follows.
\end{proof}

Therefore, under certain particular conditions, the multidimensional rational covariance extension problem has a very simple solution with a polynomial spectral density, namely
\begin{equation}
\label{eq:dmu=Pdm}
\dmu= P(e^{i\thetab})dm(\thetab), \quad p \in \PPC \backslash \{0\}. 
\end{equation}

\begin{proposition}\label{prop:polsolution}
The multidimensional rational covariance extension problem has a unique polynomial solution \eqref{eq:dmu=Pdm} if and only if $c\in\PPC \backslash \{0\}$, namely $P=C$, where 
\begin{displaymath}
C(e^{i\thetab}):=\sum_{\kb\in \Lambda}c_\kb e^{-i(\kb,\thetab)}.
\end{displaymath}
\end{proposition}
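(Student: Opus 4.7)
The plan is to reduce both implications, as well as uniqueness, to the orthogonality relation $\int_{\mT^d} e^{i(\kb-\lb,\thetab)}\dm(\thetab)=\delta_{\kb\lb}$ applied to the truncated Fourier expansions of $P$ and $C$. Everything follows by matching coefficients on the index set $\Lambda$.

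For the sufficiency direction, assume $c\in\PPC\setminus\{0\}$ and set $P:=C$, so that $C(e^{i\thetab})=\sum_{\kb\in\Lambda}c_\kb e^{-i(\kb,\thetab)}$ is a nonnegative trigonometric polynomial on $\mT^d$ (recall that $c_{-\kb}=\bar c_\kb$ ensures $C$ is real-valued, and $c\in\PPC$ ensures nonnegativity). Then $\dmu:=C(e^{i\thetab})\dm(\thetab)$ is a nonnegative measure, and for each $\kb\in\Lambda$
\begin{equation*}
\int_{\mT^d}\expfunk\,C(e^{i\thetab})\dm(\thetab)
=\sum_{\lb\in\Lambda}c_\lb\int_{\mT^d}e^{i(\kb-\lb,\thetab)}\dm(\thetab)=c_\kb,
\end{equation*}
so \eqref{eq:Cov} is satisfied. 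Moreover, by Lemma~\ref{lem:PsubsetC}, $c\in\PPC\setminus\{0\}\subset\CP$, so this is consistent with solvability of the rational covariance extension problem.

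For the necessity direction together with uniqueness, suppose that $\dmu=P(e^{i\thetab})\dm(\thetab)$ with $p\in\PPC\setminus\{0\}$ solves the moment equation \eqref{eq:Cov}. The same orthogonality computation as above yields
\begin{equation*}
c_\kb=\int_{\mT^d}\expfunk P(e^{i\thetab})\dm(\thetab)=p_\kb\qquad\text{for all $\kb\in\Lambda$}.
\end{equation*}
Since both $p$ and $c$ are indexed by $\Lambda$, this forces $p=c$; in particular $c\in\PPC\setminus\{0\}$ and $P=C$. This simultaneously proves the converse implication and shows that the polynomial solution is unique.

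There is essentially no hard obstacle here: the only point requiring any care is to keep track of the dual pairing between coefficient vectors in $\fC$ and trigonometric polynomials on $\mT^d$, and to make sure that the symmetry $p_{-\kb}=\bar p_\kb$ is used consistently so that $P=C$ truly identifies $p$ and $c$ as elements of $\fC$.
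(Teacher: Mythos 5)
Your proof is correct and follows essentially the same route as the paper, which notes in one line that the result is immediate because $C$ is a \emph{bona fide} spectral density and $c_\kb=\int_{\mT^d}e^{i(\kb,\thetab)}C(e^{i\thetab})\dm(\thetab)$; you have simply written out the orthogonality computation explicitly and observed that it also forces $p=c$ for any polynomial solution, which gives both necessity and uniqueness.
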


The proof of Proposition~\ref{prop:polsolution} is immediate by noting that any such $C$ is a {\em bona fide\/} spectral density and noting that  $c_\kb=\int_{\mT^d}e^{i(\kb,\thetab)}C(e^{i\thetab})dm(\thetab)$. 

As seen from the following result presented in \cite[Section 6]{karlsson2015themultidimensional}, the other extreme occurs for $c\in\partial\CP:=\CPC\setminus\CP$, when only singular solutions exist.

\begin{proposition}\label{prop:singular}
For any $c\in\partial\CP$ there is a solution $\dmu$ of \eqref{eq:Cov} with support in at most $|\Lambda|-1$ points. There is no solution with a absolutely continuous part $\Phi\dm$. 
\end{proposition}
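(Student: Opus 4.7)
The plan is to extract from the boundary condition a witness polynomial $p_0\in\PPC\setminus\{0\}$ with $\langle c,p_0\rangle=0$, and to use it to constrain the support of every solution. Such a $p_0$ exists by the definitions: $c\in\CPC$ forces $\langle c,p\rangle\ge 0$ on all of $\PPC$, while $c\notin\CP$ forces equality for at least one nonzero $p_0$.

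I would treat the ``no absolutely continuous part'' claim first. Using the moment representation and the reality of $P_0$ (coming from $p_{0,-\kb}=\bar{p}_{0,\kb}$), any solution $d\mu$ of \eqref{eq:Cov} satisfies
\[
0=\langle c,p_0\rangle=\int_{\mT^d}P_0(e^{i\thetab})\,d\mu(\thetab).
\]
Since $P_0\ge 0$ and $d\mu\ge 0$, the support of $d\mu$ must lie in the zero set $Z:=\{\thetab\in\mT^d:P_0(e^{i\thetab})=0\}$. Because $P_0$ is a nonzero real-analytic function on the connected manifold $\mT^d$, the identity principle for real-analytic functions implies that $Z$ has Lebesgue measure zero; hence any absolutely continuous component $\Phi\,dm$ in the Lebesgue decomposition of $d\mu$ must vanish.

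For the existence of an atomic solution with at most $|\Lambda|-1$ support points, the plan is to apply Carath\'eodory inside the supporting hyperplane $H:=p_0^\perp\subset\mR^{|\Lambda|}$. Since $c\in\CPC$, the moment problem admits at least one solution $d\mu$, which by the previous step is supported in $Z$, so that
\[
c=\int_{\mT^d}c(\thetab)\,d\mu(\thetab)\;\in\;\mathrm{cone}\{c(\thetab):\thetab\in Z\}\subseteq H,
\]
where $c(\thetab)_\kb:=e^{i(\kb,\thetab)}$. Applying Carath\'eodory's theorem for cones within the $(|\Lambda|-1)$-dimensional subspace $H$ then yields $c=\sum_{i=1}^k\mu_i c(\thetab_i)$ with $\mu_i>0$, $\thetab_i\in Z$, and $k\le|\Lambda|-1$; this corresponds to the atomic solution $d\mu=\sum_{i=1}^k\mu_i\delta_{\thetab_i}$.

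The main obstacle I foresee is the analytic lemma that a nontrivial nonnegative trigonometric polynomial on $\mT^d$ vanishes only on a Lebesgue null set; the remainder is a standard convex-geometric reduction to Carath\'eodory.
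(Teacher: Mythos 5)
Your proof is correct and takes the standard route: the paper itself gives no proof of Proposition~\ref{prop:singular}, deferring to \cite[Section 6]{karlsson2015themultidimensional}, where the argument is essentially the one you give --- a supporting polynomial $P_0\in\PPC\setminus\{0\}$ with $\langle c,p_0\rangle=0$ confines every solution to the measure-zero set $\{P_0=0\}$, which kills the absolutely continuous part, and conic Carath\'eodory inside the $(|\Lambda|-1)$-dimensional hyperplane $p_0^\perp$ yields the finitely supported solution. The one step you flag as an obstacle --- that a nonzero nonnegative trigonometric polynomial vanishes only on a Lebesgue-null set --- is exactly the real-analyticity fact the paper already invokes in the proof of Lemma~\ref{lem:PsubsetC}, so nothing is missing.
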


However, for any $c\in\CP$, there is a rational solution \eqref{eq:ratinaldmu} parametrized by $p\in\PPC\backslash\{0\}$, as demonstrated in \cite{ringh2015multidimensional} by considering a primal-dual pair of convex optimization problems.  In that paper the primal problem is a weighted maximum entropy problem, but as also noted in \cite[Sec. 3.2]{ringh2015multidimensional}, it is equivalent to
\begin{align}\label{eq:primal}
\min_{\dmu \geq 0} & \quad \int_{\mT^d} P \log \frac{P}{\Phi} \dm(\thetab) \\
\st & \quad c_\kb = \int_{\mT^d} \expfunk \dmu(\thetab), \quad \kb \in \Lambda, \nonumber
\end{align}
where $\Phi dm$ is the absolutely continuous part of $d\mu$.
This amounts to minimizing the (regular) Kullback-Leibler divergence between $P\dm$ and $\dmu$, subject to $\dmu$ matching the given data \cite{georgio2003kullback,ringh2015multidimensional}. In the present case of exact covariance matching, this problem is equivalent to minimizing \eqref{eq:normalizedKullbackLeibler} subject to \eqref{eq:Cov}, since $P$ is fixed and the total mass of $\dmu$ is determined by the $0$:th moment  $c_{\boldsymbol 0} = \int_{\mT^d} \dmu$. 
Hence both $\int_{\mT^d} \dmu$ and $\int_{\mT^d} P\dm$ are constants in this case. Hence problem \eqref{eq:primal_relax} is the natural extension of \eqref{eq:primal} for the case where the covariance sequence is not known exactly. 

The primal problem \eqref{eq:primal} is a problem in infinite dimensions, but with a finite number of constraints. The dual to this problem will then have a finite number of variables but an infinite number of constraints and is given by
\begin{align}\label{eq:dual}
\min_{q\in\PPC} & \quad \langle c, q \rangle - \int_{\mT^d} P \log Q \,\dm(\thetab). 
\end{align}
In particular, Theorem  2.1 in \cite{ringh2015multidimensional}, based on corresponding analysis in \cite{karlsson2015themultidimensional}, reads as follows.

\begin{theorem}\label{thm:rational}
Problem \eqref{eq:primal} has a solution if and only if $c \in \CP$. For every $c \in \CP$ and $p \in \PPC \setminus \{0\}$ the functional in \eqref{eq:dual} is strictly convex and has a unique minimizer $\hat{q} \in \PPC \setminus \{0\}$. Moreover, there exists a unique $\hat{c} \in \partial \CP$ and a (not necessarily unique) nonnegative singular measure $\derivd\hat\nu$ with support 
\begin{equation}
\label{supp(dnu)}
\supp(\derivd\hat\nu) \subseteq \{ \thetab \in \mT^d  \mid \hat{Q}(e^{i\thetab}) = 0 \}
\end{equation}
such that
\begin{subequations}
\begin{align}
& c_{\kb} = \int_{\mT^d} \expfunk \left( \frac{P}{\hat Q}  \dm  + \derivd\hat\nu \right),\quad \kb \in \Lambda, \\
& \hat{c}_\kb = \int_{\mT^d} \expfunk \derivd\hat\nu, \quad\kb \in \Lambda.
\end{align}
\end{subequations}
For any such $\derivd\hat\nu$, the measure 
\begin{equation}
\label{ }
\derivd\hat\mu(\thetab) = \frac{P(e^{i\thetab})}{\hat{Q}(e^{i\thetab})}\dm(\thetab) + \derivd\hat\nu(\thetab)
\end{equation}
is an optimal solution to the problem  \eqref{eq:primal}. Moreover, $\derivd\hat\nu$ can be chosen with support in at most $|\Lambda|-1$ points, where $|\Lambda|$ is the cardinality of the index set $\Lambda$.
\end{theorem}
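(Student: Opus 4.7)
The natural approach is to establish a Lagrange-duality relation between the primal problem \eqref{eq:primal} and the dual problem \eqref{eq:dual}, and then to extract the structural claims of the theorem from the first-order conditions of the dual.

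First I would settle necessity of $c \in \CP$. Existence of any feasible measure requires $c \in \CPC$ by the trigonometric moment theorem, so it remains to rule out $c \in \partial \CP$. By Proposition \ref{prop:singular}, in that boundary case every feasible $\derivd\mu$ is purely singular, so $\Phi \equiv 0$ a.e.; but $p \in \PPC \setminus \{0\}$ implies $P>0$ almost everywhere, so the integrand $P \log(P/\Phi)$ is $+\infty$ on a set of positive measure and no finite-objective solution exists.

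For sufficiency I would form the Lagrangian with Hermitian multipliers $q$ indexed by $\Lambda$. Decomposing $\derivd\mu = \Phi \,\dm + \derivd\nu$, minimization over the singular part $\derivd\nu \geq 0$ forces $Q \geq 0$, i.e.\ $q \in \PPC$, while pointwise minimization over $\Phi \geq 0$ yields the stationarity relation $\Phi = P/Q$ on $\{Q>0\}$. Substituting back recovers the dual \eqref{eq:dual}. The functional $J(q) := \langle c,q\rangle - \int_{\mT^d} P \log Q \,\dm$ is strictly convex on $\PPC$ since $-\log$ is strictly convex and $P>0$ on a set of positive measure. To obtain a minimizer I would prove coercivity: writing $q = r \tilde q$ with $\tilde q \in \PPC$, $\|\tilde q\|=1$, one has
\begin{equation*}
J(q) = r \langle c, \tilde q\rangle - (\log r) \int_{\mT^d} P \,\dm - \int_{\mT^d} P \log \tilde Q \,\dm,
\end{equation*}
and since $c \in \CP$ the quantity $\langle c, \tilde q\rangle$ is bounded below by a positive constant on the compact unit sphere of $\PPC$, so the linear term dominates as $r\to\infty$. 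The main technical difficulty is controlling $\int P \log \tilde Q \,\dm$ uniformly as $\tilde q$ approaches the boundary of $\PPC$, where $\tilde Q$ can vanish on positive-measure sets in the multidimensional setting; one must show that this integral is bounded above (since $\tilde Q$ is uniformly bounded) and lower semi-continuous in $\tilde q$. Strict convexity together with coercivity and lower semicontinuity then delivers a unique minimizer $\hat q \in \PPC \setminus \{0\}$.

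Finally I would reconstruct the primal optimum from $\hat q$. The stationarity relation identifies the absolutely continuous part of $\derivd\hat\mu$ as $(P/\hat Q)\,\dm$. Setting $\hat c_\kb := c_\kb - \int_{\mT^d} \expfunk (P/\hat Q)\,\dm$, any feasible singular completion $\derivd\hat\nu \geq 0$ must have $\Lambda$-moments $\hat c$. Complementary slackness, obtained by testing perturbations of $\hat q$ that leave $\PPC$, forces $\supp(\derivd\hat\nu) \subseteq \{\hat Q = 0\}$; any measure supported there cannot admit an absolutely continuous matching companion, so necessarily $\hat c \in \partial\CP$, and uniqueness of $\hat q$ gives uniqueness of $\hat c$. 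The support bound then follows by applying Proposition \ref{prop:singular} to $\hat c \in \partial\CP$, which furnishes a representing singular measure with support in at most $|\Lambda|-1$ points, while $\derivd\hat\nu$ itself need not be unique.
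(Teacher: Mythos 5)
The paper does not actually prove this theorem: it is imported verbatim from the companion paper \cite{ringh2015multidimensional} (``Theorem 2.1 in \cite{ringh2015multidimensional}, based on corresponding analysis in \cite{karlsson2015themultidimensional}''). Your Lagrangian-duality route is the same strategy that those references use and that this paper replays for the soft-constrained analogue (Theorem~\ref{thm:softcontraints}): dualize, show the dual is strictly convex with compact sublevel sets, and read the primal structure off the KKT conditions. So the architecture is right, but there are three points where your sketch has genuine gaps or errors.

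First, a nonzero trigonometric polynomial cannot vanish on a set of positive measure (its zero set is a proper real-analytic variety), so your stated ``main technical difficulty'' is misdiagnosed; the actual issue is lower semicontinuity of $q\mapsto -\int P\log Q\,\dm$ as $\hat Q$ acquires zeros, handled by a Fatou-type argument in \cite[Lemma 3.1]{ringh2015multidimensional}. Second, and more seriously, you posit ``any feasible singular completion $\derivd\hat\nu\geq 0$'' of $\hat c$ without establishing that one exists. This is the crux of the theorem: you must first show $\hat c\in\CPC$, which follows from the variational inequality at the optimum --- for every feasible direction $p'-\hat q$ with $p'\in\PPC$ the directional derivative of the dual functional is nonnegative, giving $\langle\hat c,p'\rangle\geq\langle\hat c,\hat q\rangle$ for all $p'\in\PPC$, whence (scaling $p'=2\hat q$ and $p'=\hat q/2$) $\langle\hat c,\hat q\rangle=0$ and $\hat c$ lies in the dual cone $\CPC$. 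Only then does the solvability of the truncated moment problem for $\hat c$ furnish a representing measure $\derivd\hat\nu$ with at most $|\Lambda|-1$ atoms, and $\int\hat Q\,\derivd\hat\nu=\langle\hat c,\hat q\rangle=0$ with $\hat Q\geq 0$ forces the support condition \eqref{supp(dnu)} for any such measure; your causal chain (support condition first, then $\hat c\in\partial\CP$) runs backwards. Third, you never close the duality loop: having built the candidate $\derivd\hat\mu$, one must verify it is primal-optimal, e.g.\ by checking that every feasible $\dmu$ satisfies $\mathcal{L}(\Phi,\derivd\nu,\hat q)\leq\mathcal{L}(\hat\Phi,\derivd\hat\nu,\hat q)$ and invoking weak duality, as the paper does explicitly at the end of the proof of Theorem~\ref{thm:softcontraints}.
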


If $c\in \partial \CP$, only a singular measure with finite support would match the moment condition (Proposition~\ref{prop:singular}). In this case, the problem \eqref{eq:primal} makes no sense, since any feasible solution has infinite objective value.  

In \cite{karlsson2015themultidimensional} we also derived the KKT conditions
\begin{subequations}\label{eq:opt_cond_exact}
\begin{align}
& \hat{q} \in \PPC, \quad \hat c \in \partial \CP, \quad \langle \hat{c}, \hat{q} \rangle = 0 \label{eq:opt_cond_exact_slack} \\
& c_\kb = \int_{\mT^d} \expfunk \frac{P}{\hat Q}\dm + \hat{c}_\kb,\quad \kb \in \Lambda  ,\label{eq:opt_cond_exact_KKT}
\end{align}
\end{subequations}
which are necessary and sufficient for optimality of the primal and dual problems. 

Since  \eqref{eq:primal}  is an inverse problem, we are interested in how the solution depends on the parameters of the problem. From Propositions 7.3 and 7.4 in \cite{karlsson2015themultidimensional} we have the following result.

\begin{proposition}\label{prop:cp2qhat}
Let $c$,  $p$ and $\hat{q}$  be as in Theorem~\ref{thm:rational}. Then the map $(c,p)\mapsto \hat{q}$ is continuous.
\end{proposition}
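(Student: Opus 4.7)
The plan is a standard sequential-compactness argument exploiting the strict convexity of the dual objective
\[
J_{c,p}(q) := \langle c, q \rangle - \int_{\mT^d} P \log Q \, \dm(\thetab)
\]
and the uniqueness of its minimizer furnished by Theorem~\ref{thm:rational}. Fix a sequence $(c_n, p_n) \to (c, p)$ in $\CP \times (\PPC \setminus \{0\})$, write $\hat{q}_n$ for the unique minimizer of $J_{c_n, p_n}$, and let $\hat{q}$ denote the minimizer associated with $(c, p)$. The goal is to show $\hat{q}_n \to \hat{q}$; since $\fC$ is finite-dimensional, it suffices to prove that $\{\hat{q}_n\}$ is bounded and that every convergent subsequence has limit $\hat{q}$.

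For boundedness, openness of $\CP$ and compactness of the unit sphere in $\PPC$ yield $\alpha > 0$ with $\langle c, q \rangle \geq \alpha \|q\|$ for all $q \in \PPC$, hence $\langle c_n, q \rangle \geq \tfrac{\alpha}{2}\|q\|$ for $n$ large. Since $-\int_{\mT^d} P_n \log Q \, \dm \geq -p_{n,\nollb} \log\|Q\|_\infty$ and $\|Q\|_\infty$ is controlled by a constant multiple of $\|q\|$, the log term grows only logarithmically in $\|q\|$. Comparing with the uniform upper bound $J_{c_n, p_n}(\hat{q}_n) \leq J_{c_n, p_n}(q_0) = c_{n,\nollb}$ for the reference polynomial $q_0 \equiv 1$, linear-versus-logarithmic coercivity bounds $\|\hat{q}_n\|$. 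Extract a convergent subsequence $\hat{q}_n \to q^* \in \PPC$ (not relabeled). If $q^* = 0$, then $Q_n \to 0$ uniformly on $\mT^d$, so $-\log Q_n \to +\infty$ pointwise; Fatou (using $P_n \to P$ uniformly and $P \not\equiv 0$) forces $-\int P_n \log Q_n \, \dm \to +\infty$, while $\langle c_n, \hat{q}_n \rangle \to 0$, contradicting the above upper bound. Hence $q^* \in \PPC \setminus \{0\}$.

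To identify $q^*$, fix any test $q \in \PP$ (strictly positive), so that $\log Q$ is bounded on $\mT^d$ and therefore $J_{c_n, p_n}(q) \to J_{c, p}(q)$. The uniformly bounded $\hat{Q}_n$ make $-P_n \log \hat{Q}_n$ uniformly bounded below, so Fatou combined with pointwise convergence gives $\liminf_n \left( -\int P_n \log \hat{Q}_n \, \dm \right) \geq -\int P \log Q^* \, \dm$, and together with $\langle c_n, \hat{q}_n \rangle \to \langle c, q^* \rangle$ we obtain
\[
J_{c, p}(q^*) \leq \liminf_n J_{c_n, p_n}(\hat{q}_n) \leq \lim_n J_{c_n, p_n}(q) = J_{c, p}(q).
\]
Thus $q^*$ minimizes $J_{c,p}$ over $\PP$. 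Approximating $\hat{q}$ via $\hat{q} + \varepsilon q_1 \in \PP$ for any $q_1 \in \PP$, monotone convergence (applicable since $Q_\varepsilon \downarrow \hat{Q}$ and $\int P \log \hat{Q}\, \dm > -\infty$ because $\hat{q}$ has finite optimal value) yields $J_{c,p}(\hat{q} + \varepsilon q_1) \to J_{c,p}(\hat{q})$ as $\varepsilon \downarrow 0$, so $J_{c,p}(q^*) \leq J_{c,p}(\hat{q})$. Uniqueness of the dual minimizer forces $q^* = \hat{q}$, and since every subsequential limit equals $\hat{q}$ the full sequence converges.

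The main obstacle is the possible unboundedness of $\int P \log Q \, \dm$ on the boundary $\partial\PPC$, a phenomenon more delicate in dimension $d \geq 2$ than in the scalar case where Szeg\H{o}-type integrability is classical. Restricting the test vectors to the interior cone $\PP$ and then invoking uniqueness of the minimizer, rather than attempting to pass to the limit directly on the boundary, is what allows the argument to sidestep this subtlety.
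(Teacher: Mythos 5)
Your argument is correct, and it lands on the same circle of ideas the paper relies on: coercivity (linear beats logarithmic growth) to bound the minimizers, extraction of a subsequential limit, identification of that limit via uniqueness of the dual minimizer, and the interior perturbation $q+\varepsilon q_0$ with $q_0\in\PP$ to sidestep the possible divergence of $\int_{\mT^d}P\log Q\,\dm$ on $\partial\PPC$. The organization differs in a worthwhile way, though. The paper (deferring to Propositions 7.3--7.4 of the companion reference, and replaying the scheme in its Theorem~\ref{thm:W2qcont} and Lemma~\ref{lem:W2Jcont}) first proves a standalone, \emph{two-sided} continuity result for the optimal value $(\ca,\pa)\mapsto \mJ_{\ca,\pa}(\hat q)$ -- itself a page of symmetric perturbation estimates -- and then exploits $\lim_k\mJ_k(\hat q_k)=\mJ(\hat q)$ together with a perturbation of the \emph{approximating} minimizers $\hat q_k+\varepsilon q_0$. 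You instead get by with only the easy half: lower semicontinuity of the objective along $\hat q_n$ (Fatou, using the uniform upper bound on $\hat Q_n$), an upper bound on $\liminf_n J_{c_n,p_n}(\hat q_n)$ from fixed interior test polynomials, and then a monotone-convergence approximation of $\hat q$ from inside $\PP$ to close the gap $\inf_{\PP}J_{c,p}=J_{c,p}(\hat q)$. This is a leaner route that avoids proving full continuity of the value function, at the cost of not yielding that continuity as a byproduct (the paper reuses its Lemma~\ref{lem:W2Jcont} elsewhere, e.g.\ for the boundedness statement needed in Theorem~\ref{thm:W2Whom}). All the individual steps check out: the uniform coercivity $\langle c_n,q\rangle\ge\tfrac{\alpha}{2}\|q\|$ from openness of $\CP$, the exclusion of $q^*=0$ via $p_{\nollb}>0$, and the finiteness of $\int P\log\hat Q\,\dm$ needed for the monotone convergence step all follow as you indicate.
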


To get a full description of well-posedness of the solution we would like to extend this continuity result to the map $(c,p)\mapsto (\hat{q},\hat{c})$. However, such a generalization is only possible under certain conditions. The following result is a consequence of Proposition~\ref{prop:cp2qhat} and \cite[Corollary 2.3]{ringh2015multidimensional}.

\begin{proposition}\label{prop:c2qhatchat2}
Let $c$, $p$, $\hat{q}$ and $\hat{c}$ be as in Theorem~\ref{thm:rational}. Then,  for  $d\leq 2$ and all $(c,p)\in\CP\times\PP$, the mapping $(c,p)\to(\hat q, \hat c)$  is continuous.  
\end{proposition}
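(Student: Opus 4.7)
The plan is to leverage Proposition~\ref{prop:cp2qhat}, which already guarantees continuity of the map $(c,p) \mapsto \hat{q}$, and to recover $\hat{c}$ via the KKT relation so that its continuity in $(c,p)$ follows from continuity of an integral functional of $(p,\hat{q})$. Specifically, from \eqref{eq:opt_cond_exact_KKT},
\begin{equation*}
\hat{c}_\kb \;=\; c_\kb \;-\; \int_{\mT^d} \expfunk \frac{P(e^{i\thetab})}{\hat{Q}(e^{i\thetab})}\,\dm(\thetab), \qquad \kb\in\Lambda,
\end{equation*}
so that $\hat{c}$ is expressed as a difference between $c$ (trivially continuous in itself) and an integral depending only on the polynomials $P$ and $\hat{Q}$. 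Once continuity of that integral in $(p,\hat{q})$ is established, composition with Proposition~\ref{prop:cp2qhat} delivers continuity of $(c,p)\mapsto \hat{c}$, and therefore of the full mapping $(c,p)\mapsto(\hat{q},\hat{c})$.

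The first concrete step is thus to define, for each fixed $\kb\in\Lambda$, the functional
\begin{equation*}
\Psi_\kb(p,q) \;:=\; \int_{\mT^d} \expfunk \frac{P(e^{i\thetab})}{Q(e^{i\thetab})}\,\dm(\thetab)
\end{equation*}
on $\PP \times (\PPC\setminus\{0\})$, and argue that it is jointly continuous when $d\leq 2$. This is where the dimensional restriction enters: for $p\in\PP$, the numerator $P$ is bounded away from zero on the compact torus $\mT^d$, so integrability of $P/Q$ is equivalent to integrability of $1/Q$. In dimension $d\leq 2$, the structure of positive trigonometric polynomials makes the assignment $q\mapsto 1/Q$ well-behaved in an $L^1$ sense even as $Q$ develops zeros on $\mT^d$; this is precisely the content of \cite[Corollary~2.3]{ringh2015multidimensional}, which the proposition is stated to rely on. Joint continuity in $(p,q)$ then follows by dominated convergence once an integrable upper envelope for $P/Q$ is available on a neighborhood of each target $(p,q)$.

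With continuity of $\Psi_\kb$ in hand, the proof assembles as follows: given a convergent sequence $(c_n,p_n)\to (c,p)$ in $\CP\times \PP$, Proposition~\ref{prop:cp2qhat} yields $\hat{q}_n \to \hat{q}$; applying $\Psi_\kb$ gives $\Psi_\kb(p_n,\hat{q}_n)\to \Psi_\kb(p,\hat{q})$ for every $\kb\in\Lambda$; and then the KKT relation gives $\hat{c}_n = c_n - \Psi_\kb(p_n,\hat{q}_n)_{\kb\in\Lambda} \to c - \Psi_\kb(p,\hat{q})_{\kb\in\Lambda} = \hat{c}$, component by component. Membership $\hat{c}_n\in\partial\CP$ is preserved in the limit because $\partial\CP$ is closed.

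The main obstacle, and the reason for the dimension restriction, is the possible loss of integrability of $P/\hat{Q}$ as $\hat{Q}$ approaches a polynomial with zeros on $\mT^d$. In $d=1$ a positive trigonometric polynomial with zeros on $\mT$ has zeros of even order, so $1/Q$ fails to be integrable at all, and equilibria with a singular part can be analyzed directly; in $d=2$ the more subtle estimate supplied by \cite[Corollary~2.3]{ringh2015multidimensional} is needed to rule out pathological concentration of mass of $P/\hat{Q}_n$ in the limit. For $d\geq 3$ no analogous estimate is available, which is exactly why the statement is restricted to $d\leq 2$.
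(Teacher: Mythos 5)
Your overall skeleton (recover $\hat{c}$ from the KKT relation \eqref{eq:opt_cond_exact_KKT} and compose with the continuity of $(c,p)\mapsto\hat{q}$ from Proposition~\ref{prop:cp2qhat}) is reasonable, but the central analytic claim is wrong as stated, and the role of the restriction $d\leq 2$ is mischaracterized. The functional $\Psi_\kb(p,q)=\int_{\mT^d}\expfunk P/Q\,\dm$ is \emph{not} jointly continuous on $\PP\times(\PPC\setminus\{0\})$: for $d\leq 2$ and $p\in\PP$, if $Q$ has a zero on $\mT^d$ then $1/Q\notin L^1(\mT^d)$ and $\Psi_{\boldsymbol 0}(p,q)=+\infty$, so the functional is not even finite on the domain you work with, and your sentence asserting that $q\mapsto 1/Q$ is ``well-behaved in an $L^1$ sense even as $Q$ develops zeros'' is the opposite of what happens in low dimensions. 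Likewise, \cite[Corollary~2.3]{ringh2015multidimensional} is not an $L^1$ estimate ruling out ``concentration of mass''; what it actually says is that for $d\leq 2$ and $p\in\PP$ the optimal $\hat{q}$ lies in the open cone $\PP$ and consequently $\hat{c}=0$.

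That fact is also what makes the paper's proof essentially a one-liner: since $\hat{c}\equiv 0$ on $\CP\times\PP$ when $d\leq 2$, the $\hat{c}$-component of the map is constant, and continuity of $(c,p)\mapsto(\hat{q},\hat{c})$ reduces to Proposition~\ref{prop:cp2qhat}. Your argument can be repaired along the lines you sketch, but only by first establishing this interiority: once you know $\hat{Q}>0$ on the compact torus at the target point, $\hat{Q}$ is uniformly bounded away from zero in a neighborhood of $\hat{q}$, the dominated convergence step is immediate, and in fact the limit of $\hat{c}_n=0$ is trivially $\hat{c}=0$. Without that step your proof has a genuine gap, because the dominated convergence argument has no integrable envelope precisely in the regime ($\hat{Q}$ acquiring zeros) that the dimension restriction is meant to exclude. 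Note also that the delicate $L^1$-continuity of $q\mapsto Q^{-1}$ near boundary points is exactly the issue the paper defers to $d\geq 3$ (Proposition~\ref{prp:qQinvLt}), where zeros of $\hat{Q}$ genuinely can occur at the optimum.
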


Corollary 2.3 in \cite{ringh2015multidimensional} actually ensures that $\hat{c}=0$ for $d\leq 2$ and $p\in\PP$. However, in Section~\ref{ssec:qhat2chat} we present a generalization of Proposition~\ref{prop:c2qhatchat2} to cases with $d\geq3$, where then $\hat{c}$ may be nonzero. (The proof of this generalization can be found in \cite{ringh2017further}.)
Here we shall also consider an example where continuity fails when $p$ belongs to the boundary $\partial\PP:= \PPC \setminus \PP$, i.e.,  the corresponding nonnegative trigonometric polynomial $P(e^{i\thetab})$ is zero in at least one point. 

\section{Approximate covariance extension with soft constraints}\label{sec:approxsoft}

To handle the case with noisy covariance data, when $c$ may not even belong to $\CP$, we  relax the exact covariance matching constraint \eqref{eq:Cov} in the primal problem \eqref{eq:primal} to obtain the problem \eqref{eq:primal_relax}. In this case  it is natural to reformulate the objective function in \eqref{eq:primal} to include a term that also accounts for changes in the total mass of $\dmu$. Consequently, we have exchanged the objective function in \eqref{eq:primal} by the normalized Kullback-Leibler divergence \eqref{eq:normalizedKullbackLeibler} plus a term that ensures approximate data matching.

Using the normalized Kullback-Leibler divergence, as proposed in  \cite[ch. 4]{gzyl1995method} \cite{csiszar1991why, zorzi2014rational}, is an advantage in the approximate covariance matching problem since this divergence is always nonnegative, precisely as is the case for probability densities. To see this, observe that, in view of the basic inequality  $x - 1 \geq \log x$,
\begin{align*}
  \kl(P\dm, \dmu) &= \int_{\mT^d}  \left(  P \left( - \log\frac{\Phi}{P} \right) \dm +d\mu -Pdm \right) \\
 & \geq \int_{\mT^d}  \left(  P (1 - \frac{\Phi}{P})\dm + \Phi\dm -Pdm \right) + \int_{\mT^d} \dmuh \geq 0,
 \end{align*}
 since $\dmuh$ is a nonnegative measure. 
Moreover, $ \kl(P\dm, P\dm)=0$, as can be seen by taking $\dmu=Pdm$ in \eqref{eq:normalizedKullbackLeibler}.

The problem under consideration is to find a nonnegative measure $\dmu=\Phi dm +\derivd\nu$ minimizing $$\kl(P\dm, \dmu)+\frac{1}{2}\|r - c\|_{W^{-1}}^2$$ subject to \eqref{eq:r}. To derive the dual of this problem we consider the corresponding maximization problem and form the Lagrangian 
\begin{align*}
\mathcal{L}(\Phi,\derivd\nu, r, q) = & \; -\kl(P\dm, \dmu) - \frac{1}{2}\|r - c\|_{W^{-1}}^2 + \sum_{\kb \in \Lambda} q_\kb^* \Big( r_\kb - \int_{\mT^d} \expfunk \dmu(\thetab) \Big)\\
=&\;-\kl(P\dm, \dmu) - \frac{1}{2}\|r - c\|_{W^{-1}}^2  +\langle r,q\rangle - \int_{\mT^d}Q\dmu \, ,
\end{align*}
where $q := [ q_\kb \mid \kb:=(k_1,\ldots, k_d) \in \Lambda]$ are Lagrange multitipliers and $Q$ is the corresponding trigonometric polynomial \eqref{Ptrigpol}. However, 
\begin{equation}
\label{D(Pdm,dmu)}
\kl(P\dm, \dmu)=\int_{\mT^d}P(\log P -1)dm - \int_{\mT^d}P\log\Phi dm + r_\nollb\, ,
\end{equation}
and therefore 
\begin{align}\label{eq:Lagrangian}
\mathcal{L}(\Phi,\derivd\nu, r, q) = &\; \int_{\mT^d}P\log\Phi dm -  \int_{\mT^d}Q\Phi dm -  \int_{\mT^d} Q\derivd\nu  -\int_{\mT^d}P(\log P -1)dm\notag \\
&\; +\langle r,q-e\rangle - \frac{1}{2}\|r - c\|_{W^{-1}}^2 \, , 
\end{align}
where  $e := [e_\kb]_{\kb \in \Lambda}$, $e_{\boldsymbol 0} = 1$ and $e_{\kb} = 0$ for $\kb \in \Lambda \setminus \{ \boldsymbol 0 \}$, and hence $r_\nollb=\langle r,e\rangle$.

In deriving the dual functional $$\varphi(q)=\sup_{\Phi\geq 0,\derivd\nu\geq 0,r}\mathcal{L}(\Phi,\derivd\nu, r, q),$$ to be minimized, we only need to consider $q\in\PPC\setminus\{0\}$, as $\varphi$ will take infinite values for $q\not\in\PPC$. In fact, following along the lines of \cite[p. 1957]{ringh2015multidimensional}, we note that, if $Q(e^{i\thetab_0})<0$, \eqref{eq:Lagrangian} will tend to infinity when $\nu(\thetab_0)\to\infty$. Moreover, since $p\in\PPC\setminus\{0\}$, there is a neighborhood where $P(e^{i\thetab})>0$, letting $\Phi$ tend to infinity in this neighborhood, \eqref{eq:Lagrangian} will tend to infinity if $Q\equiv 0$. We also note that the nonnegative function $\Phi$ can only be zero on a set of measure zero; otherwise the first term in \eqref{eq:Lagrangian} will be $-\infty$.

The directional derivative\footnote{Formally, the Gateaux differential \cite{luenberger1969optimization}.} of the Lagrangian \eqref{eq:Lagrangian} in any feasible direction $\delta\Phi$, i.e., any direction $\delta\Phi$ such that $\Phi +\varepsilon \delta\Phi \geq 0$ for sufficiencly small $\varepsilon >0$,  is easily seen to be
\begin{displaymath}
\delta\mathcal{L}(\Phi,\derivd\nu, r, q;\delta\Phi)= \int_{\mT^d}\left(\frac{P}{\Phi}-Q\right)\delta\Phi\dm.
\end{displaymath} 
In particular, the direction $\delta\Phi:=\Phi\,\text{sign}(P-Q\Phi)$ is feasible since $(1\pm\varepsilon)\Phi\geq 0$ for $0<\varepsilon < 1$. Therefore, any maximizing $\Phi$ must satisfy $\int_{\mT^d}|P-Q\Phi|\dm\leq 0$ and hence 
\eqref{eq:rat}. Moreover, a maximizing choice of $\derivd\nu$ will require that 
\begin{equation}
\label{eq:intQdnu=0}
\int_{\mT^d} Q\derivd\,\nu=0, 
\end{equation}
as this nonnegative term can be made zero by the simple choice $\derivd\nu\equiv 0$, and consequently \eqref{supp(dnu)} must hold.  Finally, the directional derivative 
\begin{displaymath}
\delta\mathcal{L}(\Phi,\derivd\nu, r, q;\delta r)= \langle \delta r, q-e + W^{-1}(r-c)\rangle
\end{displaymath}
is zero for all $\delta r\in \fC$ if
\begin{equation}
\label{eq:q2r}
r=c+W(q-e).
\end{equation} 
Inserting this together with \eqref{eq:rat} and \eqref{eq:intQdnu=0} into  \eqref{eq:Lagrangian} then yields the dual functional 
\begin{displaymath}
\varphi(q)=\langle c, q\rangle - \int_{\mT^d} P\log Q\,\dm +\frac12\|q-e\|_W^2 +c_0.
\end{displaymath}
Consequently the dual of the (primal) optimization problem \eqref{eq:primal_relax} is equivalent to 
\begin{align}\label{eq:dual_relax}
\min_{q\in\PPC} & \quad \langle c, q \rangle - \int_{\mT^d} P \log Q\, \dm+ \frac{1}{2}\|q - e\|_{W}^2 .
\end{align}

\begin{theorem}\label{thm:softcontraints}
For every $p \in \PPC \setminus \{0\}$ the functional in \eqref{eq:dual_relax} is strictly convex and has a unique minimizer $\hat{q}\in \PPC \setminus \{0\}$. Moreover, there exists a unique $\hat{r}\in\CP$, a unique $\hat{c} \in \partial \CP$ and a (not necessarily unique) nonnegative singular measure $\derivd\hat\nu$ with support 
\begin{equation}
\label{supp(dnu)soft}
\supp(\derivd\hat\nu) \subseteq \{ \thetab \in \mT^d  \mid \hat{Q}(e^{i\thetab}) = 0 \}
\end{equation}
such that
\begin{subequations}\label{rhat+chat}
\begin{align}
& \hat{r}_{\kb} = \int_{\mathbb{T}^d} \expfunk \left( \frac{P}{\hat Q}  \dm  + \derivd\hat\nu \right)\, \text{ for all } \kb \in \Lambda, \label{rhat}\\
& \hat{c}_\kb = \int_{\mathbb{T}^d} \expfunk \derivd\hat\nu, \text{ for all } \kb \in \Lambda \, ,\ \label{chat}
\end{align}
\end{subequations}
and the measure 
\begin{equation}
\label{eq:optdmu(soft)}
\derivd\hat\mu(\thetab) = \frac{P(e^{i\thetab})}{\hat{Q}(e^{i\thetab})}\dm(\thetab) + \derivd\hat\nu(\thetab)
\end{equation}
is an optimal solution to the primal problem \eqref{eq:primal_relax}. Moreover, $\derivd\hat\nu$ can be chosen with support in at most $|\Lambda|-1$ points.
\end{theorem}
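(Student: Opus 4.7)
The plan is to follow the proof of Theorem~\ref{thm:rational} in \cite{karlsson2015themultidimensional} with two structural modifications: the dual now carries an extra strictly convex quadratic regularizer $\tfrac12\|q-e\|_W^2$, and the prescribed moments $c$ get replaced in the KKT analysis by the dual-determined sequence $\hat r := c + W(\hat q - e)$. First I would establish existence and uniqueness of the dual minimizer. Denote by $J(q)$ the functional in \eqref{eq:dual_relax}. Strict convexity is immediate: the quadratic term is strictly convex because $W \succ 0$, the functional $q \mapsto -\int_{\mT^d} P\log Q\,dm$ is convex on $\PPC$ (as $-\log$ is convex and $Q$ depends linearly on $q$), and $\langle c, q\rangle$ is linear. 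Coercivity follows from the quadratic term, and $J(0)=+\infty$ because $\log 0=-\infty$, so any minimizing sequence stays bounded and bounded away from $0$. Combined with lower semicontinuity of $-\int P\log Q\,dm$ on $\PPC$ (justified by Fatou's lemma, allowing the value $+\infty$), this yields a unique minimizer $\hat q \in \PPC\setminus\{0\}$.

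Next I would extract the KKT conditions. The directional derivative of $J$ at $\hat q$ in a feasible direction $\delta q$ takes the form
\begin{equation*}
DJ(\hat q)[\delta q] = \bigl\langle \hat r - \rho,\, \delta q\bigr\rangle, \qquad \rho_\kb := \int_{\mT^d} e^{i(\kb,\thetab)} \frac{P(e^{i\thetab})}{\hat Q(e^{i\thetab})}\,dm(\thetab),
\end{equation*}
with $\hat r := c + W(\hat q - e)$. Setting $\hat c := \hat r - \rho$, the optimality condition $DJ(\hat q)[\delta q] \geq 0$ for every feasible $\delta q$ reads $\langle \hat c, \delta q\rangle \geq 0$. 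Since $\PPC$ is a closed convex cone, every $q' \in \PPC$ is a feasible direction from $\hat q$, giving $\hat c \in \CPC$; and $\delta q = -\hat q$ is also feasible (by cone homogeneity), giving $\langle \hat c, \hat q\rangle \leq 0$. Together these force $\langle \hat c, \hat q\rangle = 0$, and since $\hat q \neq 0$ this excludes $\hat c \in \CP$, leaving $\hat c \in \partial\CP$. Uniqueness of $\hat r$ and $\hat c$ is inherited from that of $\hat q$.

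Finally, Proposition~\ref{prop:singular} applied to $\hat c \in \partial\CP$ yields a non-negative singular measure $d\hat\nu$ with support in at most $|\Lambda|-1$ points representing $\hat c$. The identity $\langle \hat c, \hat q\rangle = \int_{\mT^d} \hat Q\,d\hat\nu = 0$ together with $\hat Q, d\hat\nu \geq 0$ localises $\supp(d\hat\nu) \subseteq \{\hat Q = 0\}$, giving \eqref{supp(dnu)soft}. Setting $d\hat\mu := (P/\hat Q)\,dm + d\hat\nu$, its moment sequence is precisely $\hat r$, and $\hat r \in \CP$ because $\langle \hat r, q\rangle = \int_{\mT^d} Q\,(P/\hat Q)\,dm + \int_{\mT^d} Q\,d\hat\nu > 0$ for every $q\in\PPC\setminus\{0\}$. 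Plugging $(d\hat\mu, \hat r, \hat q)$ back into the Lagrangian \eqref{eq:Lagrangian} eliminates the duality gap, so $d\hat\mu$ is primal-optimal by weak duality. The main technical obstacle is to justify the existence step rigorously on the closed cone $\PPC$---in particular to rule out a minimizing sequence escaping towards $\partial\PP$ where $-\int P\log\hat Q\,dm$ may become infinite---and to handle the one-sided feasibility of directional perturbations when $\hat q$ lies on $\partial\PP$; both are handled by exploiting the convex-cone structure of $\PPC$.
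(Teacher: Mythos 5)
Your proposal is correct and follows essentially the same route as the paper: establish strict convexity and existence of the dual minimizer via coercivity of the quadratic term plus lower semicontinuity, read off the stationarity conditions on the cone $\PPC$ to identify $\hat c=\hat r-\rho\in\partial\CP$ with $\langle\hat c,\hat q\rangle=0$, represent $\hat c$ by a discrete singular measure supported where $\hat Q$ vanishes, and close the duality gap through the Lagrangian. The one technical point you flag at the end --- validity of the directional-derivative/KKT argument when $\hat q\in\partial\PP$, including finiteness of $\int_{\mT^d}P\hat Q^{-1}\dm$ --- is precisely the part the paper itself outsources to the analysis in \cite[sect.~3.1.5]{ringh2015multidimensional}, so your sketch matches the paper's level of detail.
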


\begin{proof}
The objective functional $\mJ$ of the dual problem \eqref{eq:dual_relax} can be written as the sum of two terms, namely 
\begin{displaymath}
\mJ_1(q) =  \langle \tilde{c}, q \rangle - \int_{\mT^d} P \log(Q) \dm\quad\text{and}\quad \mJ_2(q)=\langle c-\tilde{c}, q \rangle +\frac{1}{2}\|q - e\|_{W}^2\, ,
\end{displaymath}
where $\tilde{c}\in\CP$.
The functional $\mJ_1$ is strictly convex (Theorem~\ref{thm:rational}), and trivially the same holds for $\mJ_2$ since it is a positive definite quadratic form. Consequently, $\mJ=\mJ_1+\mJ_2$ is strictly convex, as claimed. Moreover, $\mJ_1$ is lower semicontinuous  \cite[Lemma 3.1]{ringh2015multidimensional} with compact sublevel sets $\mJ_1^{-1}(-\infty,\rho]$ \cite[Lemma 3.2]{ringh2015multidimensional}. Likewise,  $\mJ_2$ is continuous with compact sublevel sets. Therefore $\mJ$ is lower semicontinuous with compact sublevel sets and therefore has a minimum $\hat{q}$, which must be unique by strict convexity. 

In view of \eqref{eq:q2r}, the optimal value of $r$ is given by
\begin{equation}
\label{eq:rhat}
\hat{r}=c +W(\hat{q}-e)
\end{equation}
and is hence unique. Since therefore the linear term $c+W(q-e)$  in the gradient of $\mJ$ takes the value $\hat{r}$ at the optimal point, the analysis in \cite[sect. 3.1.5]{ringh2015multidimensional} applies with obvious modifications, showing that there is a $\hat{c}\in \CPC$, which then must be unique, such that 
\begin{displaymath}
 \hat{r}_{\kb} = \int_{\mathbb{T}^d} \expfunk\frac{P}{\hat Q}  \dm +\hat{c}_{\kb}.
\end{displaymath}
Moreover, there is a discrete measure $\derivd\hat\nu$ with support in at most $|\Lambda|-1$ points such that \eqref{chat} holds; see, e.g., \cite[Proposition 2.4]{karlsson2015themultidimensional}. Then \eqref{rhat} holds as well. In view of \eqref{eq:intQdnu=0}, 
\begin{equation}
\label{eq:compslacknes}
\langle\hat{c},\hat{q}\rangle =\int_{\mathbb{T}^d} \hat{Q}\derivd\hat\nu =0,
\end{equation}
and consequently $\hat{c}\in\partial\CP$, and the support of $\derivd\hat\nu$ must satisfy \eqref{supp(dnu)soft}.


Finally, let $r$ be given in terms of $\dmu$ by \eqref{eq:r}, and let $\mI(\dmu)$ be the corresponding primal functional in \eqref{eq:primal_relax}.  Then, for any such $\dmu$, 
\begin{displaymath}
\mI(\dmu)= \mathcal{L}(\Phi,\derivd\nu, r, \hat{q})\leq \mathcal{L}(\hat{\Phi},\derivd\hat\nu, \hat{r}, \hat{q}) =\mI(\derivd\hat\mu),
\end{displaymath}
and hence $\dmuhat$ is an optimal solution to the primal problem \eqref{eq:primal_relax}, as claimed. 
\end{proof}

We collect the KKT conditions in the following corollary. 

\begin{corollary}\label{cor:KKTsoft}
The conditions
\begin{subequations}\label{eq:opt_cond_relax}
\begin{align}
& \hat{q} \in \PPC, \quad \hat c \in \partial \CP, \quad \langle \hat{c}, \hat{q} \rangle = 0 \label{eq:opt_cond_relax_slack} \\
& \hat{r}_\kb = \int_{\mT^d} \expfunk \frac{P}{\hat Q}\dm + \hat{c}_\kb,\quad \kb\in\Lambda \label{eq:opt_cond_relax_KKT} \\
&\hat{r}- c = W(\hat q - e). \label{eq:opt_cond_relax_matchError}
\end{align}
\end{subequations}
are necessary and sufficient conditions for optimality of the dual pair  \eqref{eq:primal_relax} and  \eqref{eq:dual_relax} of optimization problems. 
\end{corollary}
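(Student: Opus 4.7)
The plan is to show that \eqref{eq:opt_cond_relax} are precisely the first-order optimality (KKT / variational inequality) conditions for the strictly convex dual problem \eqref{eq:dual_relax}, after which both necessity and sufficiency follow from standard convex analysis together with Theorem~\ref{thm:softcontraints}.

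For necessity, I would point out that each component of \eqref{eq:opt_cond_relax} has in effect already been assembled in the proof of Theorem~\ref{thm:softcontraints}: the statement $\hat q\in\PPC$ and $\hat c\in\partial\CP$ are asserted there, the complementary slackness $\langle\hat c,\hat q\rangle=0$ is exactly equation \eqref{eq:compslacknes}, the moment identity \eqref{eq:opt_cond_relax_KKT} is obtained from \eqref{rhat} by separating off the singular part \eqref{chat} under the integral, and the residual relation \eqref{eq:opt_cond_relax_matchError} is \eqref{eq:rhat}.

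For sufficiency, suppose $(\hat q,\hat r,\hat c)$ satisfy \eqref{eq:opt_cond_relax}. I would first compute the gradient of the dual objective
\begin{equation*}
\mJ(q)=\langle c,q\rangle-\int_{\mT^d}P\log Q\,\dm+\tfrac{1}{2}\|q-e\|_W^2,
\end{equation*}
obtaining
\begin{equation*}
[\nabla\mJ(\hat q)]_\kb=c_\kb-\int_{\mT^d}\frac{P}{\hat Q}e^{i(\kb,\thetab)}\dm+[W(\hat q-e)]_\kb.
\end{equation*}
Using \eqref{eq:opt_cond_relax_matchError} to substitute $W(\hat q-e)=\hat r-c$ and then \eqref{eq:opt_cond_relax_KKT} to identify the integral with $\hat r-\hat c$, this collapses to $\nabla\mJ(\hat q)=\hat c$. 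For minimization of a convex functional over the closed convex cone $\PPC$, optimality is equivalent to the variational inequality $\langle\nabla\mJ(\hat q),q-\hat q\rangle\ge 0$ for every $q\in\PPC$. Since $\CPC$ is by definition the dual cone of $\PPC$, the condition $\hat c\in\partial\CP\subset\CPC$ gives $\langle\hat c,q\rangle\ge 0$ for every $q\in\PPC$, and combining this with $\langle\hat c,\hat q\rangle=0$ shows that the variational inequality holds. Strict convexity of $\mJ$, established in Theorem~\ref{thm:softcontraints}, then forces $\hat q$ to be the unique dual minimizer; $\hat r$ and $\hat c$ are determined from $\hat q$ by \eqref{eq:opt_cond_relax_matchError} and \eqref{eq:opt_cond_relax_KKT}, and the optimal primal measure is reconstructed as in the theorem.

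The only real subtlety is that $\PPC$ is a cone rather than an affine subspace, so stationarity takes the form of a variational inequality rather than $\nabla\mJ(\hat q)=0$. The key conceptual observation is that the triple of conditions in \eqref{eq:opt_cond_relax_slack} is exactly the classical dual-cone/complementary-slackness encoding of this variational inequality, with $\hat c$ playing the role of the gradient and the duality between $\PPC$ and $\CPC$ (Lemma~\ref{lem:PsubsetC} and the defining inequalities of $\CP$) supplying the needed nonnegativity. Once this translation is made, the verification reduces to the short gradient computation above.
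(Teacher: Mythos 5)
Your proof is correct, and it is worth noting that the paper itself offers no explicit argument for this corollary: it simply ``collects'' the conditions that were assembled inside the proof of Theorem~\ref{thm:softcontraints}, where sufficiency is implicitly handled by the Lagrangian saddle-point chain $\mI(\dmu)=\mathcal{L}(\Phi,\derivd\nu,r,\hat q)\leq\mathcal{L}(\hat\Phi,\derivd\hat\nu,\hat r,\hat q)=\mI(\derivd\hat\mu)$. Your necessity argument is exactly that reading of the theorem's proof (with \eqref{eq:compslacknes}, \eqref{rhat+chat} and \eqref{eq:rhat} supplying the three conditions), while your sufficiency argument takes a mildly different and arguably cleaner route: instead of returning to the Lagrangian, you verify the first-order variational inequality $\langle\nabla\mJ(\hat q),q-\hat q\rangle\geq0$ for the finite-dimensional dual directly, identifying $\nabla\mJ(\hat q)=\hat c$ and observing that \eqref{eq:opt_cond_relax_slack} is precisely the dual-cone/complementary-slackness encoding of that inequality (here one uses that $\CPC$ is the dual cone of $\PPC$, which follows from the definition of $\CP$ and the pointedness of $\PPC$). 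What this buys is a self-contained optimality certificate on the dual side; what it quietly relies on is that the integrals $\int_{\mT^d}P\hat Q^{-1}\expfunk\dm$ are finite (guaranteed by the hypothesis \eqref{eq:opt_cond_relax_KKT} itself) and, for recovering the primal optimum, the existence of a discrete $\derivd\hat\nu$ with moments $\hat c$ supported on the zero set of $\hat Q$, for which you correctly defer to the theorem. Only a cosmetic caveat: your formula for $[\nabla\mJ(\hat q)]_\kb$ should be read with the Hermitian symmetry $c_{-\kb}=\bar c_\kb$ in mind so that the sign of the exponent in the integral is consistent with the convention $\langle c,q\rangle=\sum c_\kb\bar q_\kb$; this does not affect the conclusion.
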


\section{On the well-posedness of the soft-constrained problem}\label{sec:wellposed}

In the previous sections we have shown that the primal and dual optimization problems are well-defined. Next we investigate the well-posedness of the primal problem  as an inverse problem. Thus, we first  establish continuity of the solutions $\hat{q} $ in terms of the parameters $W$, $c$ and $p$.


\newcommand{\ca}{c}
\newcommand{\cb}{c^{(0)}}
\newcommand{\cc}{c^{(1)}}
\newcommand{\cd}{c^{(2)}}
\newcommand{\ck}{c^{(k)}}
\newcommand{\pa}{p}
\newcommand{\pb}{p^{(0)}}
\newcommand{\pc}{p^{(1)}}
\newcommand{\pd}{p^{(2)}}
\newcommand{\pk}{p^{(k)}}
\newcommand{\Pa}{P}
\newcommand{\Pb}{P^{(0)}}
\newcommand{\Pc}{P^{(1)}}
\newcommand{\Pd}{P^{(2)}}
\newcommand{\Pk}{P^{(k)}}

\newcommand{\Wa}{W}
\newcommand{\Wb}{W^{(0)}}
\newcommand{\Wc}{W^{(1)}}
\newcommand{\Wd}{W^{(2)}}
\newcommand{\Wk}{W^{(k)}}

\newcommand{\PARa}{{\ca, \pa, \Wa}}
\newcommand{\PARb}{{\cb, \pb, \Wb}}
\newcommand{\PARc}{{\cc, \pc, \Wc}}
\newcommand{\PARd}{{\cd, \pd, \Wd}}
\newcommand{\PARk}{{\ck, \pk, \Wk}}

\subsection{Continuity of $\hat{q}$ with respect to $c$, $p$ and $W$} 

We start considering the continuity of the optimal solution with respect to the parameters. The parameter set of interest is
\begin{equation}
\label{eq:Pcal}
\mathcal{P}=\{(\PARa)\mid c\in \fC, p\in \PPC\setminus\{0\}, W>0\}.
\end{equation}

\begin{theorem}\label{thm:W2qcont} 
Let 
\begin{equation}
\label{eq:Jsoft}
\mJ_{\PARa}(q)=\langle c, q \rangle - \int_{\mT^d} P \log Q\, \dm+ \frac{1}{2}\|q - e\|_{W}^2.
\end{equation}
Then the map $(\PARa)\mapsto \hat{q}:=\argmin_{q\in\PPC} \mJ_{\PARa}(q)$ is continuous on  $\mathcal{P}$.
\end{theorem}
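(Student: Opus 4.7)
The plan is a standard Berge-type compactness argument that combines coercivity of $\mJ_{c,p,W}$ in $q$, appropriate lower semicontinuity in all four arguments, and the strict convexity already established in Theorem~\ref{thm:softcontraints}. Fix a convergent sequence $(c^{(k)}, p^{(k)}, W^{(k)}) \to (c^{(0)}, p^{(0)}, W^{(0)})$ in $\mathcal{P}$, write $\mJ^{(k)} := \mJ_{c^{(k)}, p^{(k)}, W^{(k)}}$, and let $\hat q^{(k)}$ denote its unique minimizer on $\PPC$; the goal is to prove $\hat q^{(k)} \to \hat q^{(0)}$.

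First I would establish boundedness of $\{\hat q^{(k)}\}$ in $\fC$. Evaluating at $q = e$ (so that the associated polynomial is identically $1$ and both the integral and the weighted quadratic terms vanish) yields the uniform estimate
$$\mJ^{(k)}(\hat q^{(k)}) \leq \mJ^{(k)}(e) = c^{(k)}_{\nollb}.$$
Since $W^{(k)} \to W^{(0)} \succ 0$, there is a $\lambda>0$ with $W^{(k)} \succeq \lambda I$ for all large $k$, so $\tfrac12\|q-e\|_{W^{(k)}}^2 \geq \tfrac{\lambda}{2}\|q-e\|_2^2$. Using $|Q(e^{i\thetab})| \leq \|q\|_1 \leq C\|q\|_2$ on $\mT^d$ together with the uniform bound on $\int P^{(k)} dm = p^{(k)}_{\nollb}$, the logarithmic term obeys $-\int P^{(k)} \log Q\,dm \geq -p^{(k)}_{\nollb}\log(C\|q\|_2)$, which the quadratic dominates for large $\|q\|_2$. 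Combined with the at most linear growth of $\langle c^{(k)},q\rangle$, this confines the sublevel sets $\{\mJ^{(k)} \leq c^{(k)}_{\nollb}\}$ to a common bounded subset of $\fC$; closedness of $\PPC$ then gives relative compactness of $\{\hat q^{(k)}\}$.

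Next I would pass to a subsequence $\hat q^{(k_j)} \to \hat q^* \in \PPC$ and identify $\hat q^* = \hat q^{(0)}$ through two ingredients. \emph{Lower semicontinuity:} the linear and quadratic terms converge by continuity in $(c,W)$; since $P^{(k)} \to P^{(0)}$ and $Q^{(k_j)} \to Q^*$ uniformly on $\mT^d$ and $P^{(k)}\log Q^{(k_j)}$ is uniformly bounded above, a reverse-Fatou estimate yields $\limsup_j \int P^{(k_j)} \log Q^{(k_j)}\,dm \leq \int P^{(0)} \log Q^*\,dm$, so $\liminf_j \mJ^{(k_j)}(\hat q^{(k_j)}) \geq \mJ^{(0)}(\hat q^*)$. \emph{Optimality comparison:} the inequality $\mJ^{(k)}(\hat q^{(k)}) \leq \mJ^{(k)}(\hat q^{(0)})$ together with $\mJ^{(k)}(\hat q^{(0)}) \to \mJ^{(0)}(\hat q^{(0)})$ yields $\limsup_j \mJ^{(k_j)}(\hat q^{(k_j)}) \leq \mJ^{(0)}(\hat q^{(0)})$. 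Combining the two, $\mJ^{(0)}(\hat q^*) \leq \mJ^{(0)}(\hat q^{(0)})$, and strict convexity of $\mJ^{(0)}$ forces $\hat q^* = \hat q^{(0)}$. As this is the unique cluster point of the bounded sequence, the full sequence converges.

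The main obstacle is the optimality comparison $\mJ^{(k)}(\hat q^{(0)}) \to \mJ^{(0)}(\hat q^{(0)})$ when $\hat q^{(0)}$ lies on the boundary $\partial \PP$: then $\hat Q^{(0)}$ vanishes on some (necessarily null) subset of $\mT^d$ and $\log \hat Q^{(0)} = -\infty$ there. The exchange of limit and integral relies on $\log \hat Q^{(0)} \in L^1(\mT^d)$, a fact that is not immediate in the multidimensional setting but follows from a Lojasiewicz-type estimate: a non-zero non-negative trigonometric polynomial vanishes at most polynomially near its real zero variety, so $\log \hat Q^{(0)}$ develops only logarithmic singularities, which are Lebesgue integrable in any dimension. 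Dominated convergence with the uniform convergence $P^{(k)} \to P^{(0)}$ then completes the argument.
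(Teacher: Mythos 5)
Your proof is correct, but it follows a genuinely different route from the paper's. The paper proves the theorem by combining a separate lemma (continuity of the optimal \emph{value} $(c,p,W)\mapsto \mJ_{c,p,W}(\hat q)$ together with boundedness of the minimizers over compacta) with an interior perturbation trick: it compares $\mJ_k(\hat q_k)$ with $\mJ_k(\hat q_k+\varepsilon q_0)$ for a fixed $q_0\in\PP$, exploiting that $\int P_k\log\bigl((\hat Q_k+\varepsilon Q_0)/\hat Q_k\bigr)\dm\geq 0$ and that $\mJ$ is jointly continuous on $\mathcal{P}\times\PP$, and only at the end lets $\varepsilon\to 0$. The whole point of that detour is to avoid ever evaluating $\int P\log Q\,\dm$ at a polynomial $Q$ that may vanish somewhere. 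Your argument instead attacks the boundary case head-on: coercivity at $q=e$ for boundedness, reverse Fatou for lower semicontinuity along the minimizing subsequence, and the comparison $\mJ^{(k)}(\hat q^{(k)})\leq\mJ^{(k)}(\hat q^{(0)})$ for the upper bound, with strict convexity closing the loop. The one load-bearing step that the paper's route avoids and yours requires is $\log\hat Q^{(0)}\in L^1(\mT^d)$ when $\hat q^{(0)}\in\partial\PP$; your \L{}ojasiewicz justification is sound (one also needs that the $\varepsilon$-neighbourhood of the real-analytic zero variety has measure $O(\varepsilon^{\beta})$ for some $\beta>0$, which holds by finiteness of its Minkowski content; alternatively a Fubini--Jensen induction on the dimension gives the same $L^1$ bound more elementarily), but since this fact is doing essentially all the work in the boundary case it deserves either a precise citation or a short self-contained proof rather than a one-line appeal. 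What each approach buys: the paper's perturbation argument stays entirely within elementary estimates already developed in the companion paper and never needs integrability of $\log Q$ on $\partial\PP$; yours is structurally the standard Berge/$\Gamma$-convergence template, is arguably more transparent, and isolates exactly where the multidimensional difficulty lives, at the price of importing one nontrivial real-analytic-geometry fact.
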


\begin{proof}
Following the procedure in  \cite[Proposition  7.3]{karlsson2015themultidimensional} we use the continuity of the optimal value (Lemma~\ref{lem:W2Jcont}) to show continuity of the optimal solution. To this end, let $(\PARk)$ be a sequence of parameters in $\mathcal{P}$ converging to $(\PARa)\in\mathcal{P}$ as $k\to\infty$. Moreover, defining $\mJ_k(q):=\mJ_\PARk (q)$ and $\mJ(q):=\mJ_\PARa (q)$  for simplicity of notation, let $\hat{q}_k = \argmin_{q \in \bar{\mathfrak{P}}_+} \mathbb{J}_{k}(q)$ and $\hat{q} = \argmin_{q \in \bar{\mathfrak{P}}_+} \mathbb{J}(q)$. By Lemma~\ref{lem:W2Jcont}, $(\hat{q}_k)$ is bounded, and hence there is a subsequence, which for simplicity we also call $(\hat{q}_k)$, converging to a limit  $q_\infty$. If we can show that $q_\infty = \hat{q}$, then the theorem follows.  To this end,  choosing a $q_0 \in \mathfrak{P}_+$, we have
\begin{align*}
\mathbb{J}_{k}(\hat{q}_k) &= \mathbb{J}_{k} (\hat{q}_k + \varepsilon q_0) - \langle \ck, \varepsilon q_0 \rangle + \int_{\mT^d} \Pk \log \left( \frac{\hat{Q}_k + \varepsilon Q_0}{\hat{Q}_k} \right)\dm \\ 
&\quad+ \frac12\|\hat{q}_k-e\|_{\Wk}^2 -\frac12 \|\hat{q}_k+\varepsilon q_0-e\|_{\Wk}^2 \\
&\geq \mathbb{J}_{k} (\hat{q}_k + \varepsilon q_0) - \langle \ck, \varepsilon q_0 \rangle + \frac12\|\hat{q}_k-e\|_{\Wk}^2-\frac12 \|\hat{q}_k+\varepsilon q_0-e\|_{\Wk}^2.
\end{align*}
Consequently, by Lemma~\ref{lem:W2Jcont}, 
\begin{equation*}
\mathbb{J}(\hat{q}) = \lim_{k \to \infty} \mathbb{J}_{k}(\hat{q}_k) \geq \lim_{k \to \infty} \mathbb{J}_{k}(\hat{q}_k + \varepsilon q_0) - \varepsilon \langle \ck, q_0 \rangle + \frac12\|\hat{q}_k-e\|_{\Wk}^2 -\frac12 \|\hat{q}_k+\varepsilon q_0-e\|_{\Wk}^2.
\end{equation*}
However $\hat{q}_k + \varepsilon q_0 \in \mathfrak{P}_+$, and, since $(\PARa,q)\mapsto\mathbb{J}_{\PARa}(q)$ is continuous in $\mathcal{P}\times\PP$, we obtain
\begin{align}
\mathbb{J}(\hat{q}) &\geq \lim_{k \to \infty} \left(\mathbb{J}_{k}(\hat{q}_k + \varepsilon q_0) - \varepsilon \langle \ck, q_0 \rangle + \frac12\|\hat{q}_k-e\|_{\Wk}^2-\frac12 \|\hat{q}_k+\varepsilon q_0-e\|_{\Wk}^2\right)\notag\\ 
&= \mathbb{J}(q_\infty + \varepsilon q_0) - \varepsilon \langle c, q_0 \rangle+ \frac12\|q_\infty-e\|_{W}^2 -\frac12 \|q_\infty+\varepsilon q_0-e\|_{W}^2 .\label{Jpestimate}
\end{align}
Letting $\varepsilon\to 0$ in \eqref{Jpestimate}, we obtain the inequality $\mathbb{J}(\hat{q})\geq \mathbb{J}(q_\infty)$. By strict convexity of $\mJ$ the optimal solution is unique, and hence $\hat q=q_\infty$. 
\end{proof}

\subsection{Continuity of $\hat{c}$ with respect to $\hat{q}$} \label{ssec:qhat2chat}

We have now established continuity from $(\PARa)$ to $\hat{q}$. In the same way as in Proposition~\ref{prop:c2qhatchat2} we are also interested in continuity of the map $(\PARa)\mapsto (\hat{q},\hat{c})$. This would follow if we could show that the map from $\hat{q}$ to $\hat{c}$ is continuous. From the KKT condition \eqref{eq:opt_cond_relax_matchError}, it is seen that $\hat{r}$ is continuous in $c$, $W$ and $\hat{q}$. In view of \eqref{eq:opt_cond_relax_KKT}, i.e.,  
\begin{displaymath}
\hat{r}_\kb = \int_{\mT^d} \expfunk \frac{P}{\hat Q}\dm + \hat{c}_\kb,\quad \kb\in\Lambda
\end{displaymath}
continuity of $\hat c$ would  follow if $\int_{\mT^d} P\hat{Q}^{-1}\dm$ is continuous in $(p,\hat{q})$ whenever it is finite. If $p\in\PP$, this follows from the continuity the map $\hat{q}\mapsto \hat{Q}^{-1}$ in $L_1(\mT^d)$.
For the case $d\leq 2$, this is trivial since if  $\int_{\mT^d} \hat{Q}^{-1}\dm$ is finite, then $\hat q\in \PP$ and $\hat Q$ is bounded away from zero (cf., Proposition~\ref{prop:c2qhatchat2}).  However, for the case  $d>2$ the optimal $\hat q$ may belong to the boundary $\partial \PP$, i.e., $\hat Q$ is zero in some point.  The following proposition shows $L_1$ continuity of $\hat{q}\mapsto \hat{Q}^{-1}$  for certain cases.

\begin{proposition} \label{prp:qQinvLt}
For $d\geq3$, let $\hat q\in\PPC$ and suppose that the Hessian $ \nabla_{\thetab\thetab} \, \hat Q$ is positive definite in each point  where $\hat Q$ is zero. Then $\hat Q^{-1}\in L_1(\mT^{d})$ and the mapping from the coefficient vector $q\in \PPC$ to $Q^{-1}$ is $L_1$ continuous in the point $\hat q$.
\end{proposition}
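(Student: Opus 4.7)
The plan is to combine a quadratic lower bound for $\hat Q$ near its zeros with a uniform integrability argument to obtain $L^1$ continuity via Vitali's convergence theorem.

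First, I would observe that the zeros of $\hat Q$ form a finite set. Since $\hat Q\ge 0$ on $\mT^d$, any zero $\thetab_0$ is a minimum, so $\nabla\hat Q(\thetab_0)=0$; by the hypothesis the Hessian at $\thetab_0$ is positive definite, so $\thetab_0$ is a non-degenerate strict local minimum and hence isolated. Compactness of $\mT^d$ then gives a finite zero set $\{\thetab_1,\dots,\thetab_N\}$. A second-order Taylor expansion in small balls $B_i:=B(\thetab_i,\delta)$ yields constants $c_i>0$ with $\hat Q(e^{i\thetab})\ge c_i|\thetab-\thetab_i|^2$ on $B_i$, while on the compact complement $\mT^d\setminus\bigcup_i B_i$ one has $\hat Q\ge \eta$ for some $\eta>0$. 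Since $d\ge 3$, the local bound $|\thetab-\thetab_i|^{-2}$ is integrable on $B_i$ (in polar coordinates, $\int_0^\delta r^{-2}\,r^{d-1}\,dr<\infty$ because $d-1>1$), so $\hat Q^{-1}\in L^1(\mT^d)$.

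Next, for the continuity at $\hat q$, I would fix a sequence $q^{(k)}\to\hat q$ in $\PPC$. Since the trigonometric polynomials live in a fixed finite-dimensional space, coefficient convergence implies $C^2$-convergence of $Q^{(k)}\to\hat Q$ on $\mT^d$. Thus for $k$ large, $\nabla^2 Q^{(k)}$ is positive definite on $B_i$ with smallest eigenvalue at least $c_i/2$, so $Q^{(k)}$ is strictly convex on $B_i$. By the implicit function theorem applied to $\nabla Q^{(k)}=0$ near $\thetab_i$, there is a unique interior minimizer $\thetab_i^{(k)}\in B_i$ with $\thetab_i^{(k)}\to\thetab_i$. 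A Taylor expansion at $\thetab_i^{(k)}$ then yields
\[
Q^{(k)}(e^{i\thetab})\ge Q^{(k)}(e^{i\thetab_i^{(k)}})+\tfrac{c_i}{4}|\thetab-\thetab_i^{(k)}|^2\ge \tfrac{c_i}{4}|\thetab-\thetab_i^{(k)}|^2, \quad \thetab\in B_i,
\]
while $Q^{(k)}\ge\eta/2$ on $\mT^d\setminus\bigcup_i B_i$, both bounds being uniform in $k$ for $k$ large.

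Finally, I would pick any exponent $s\in(1,d/2)$, which is non-empty since $d\ge 3$. From the quadratic floor, $(Q^{(k)})^{-s}\le C|\thetab-\thetab_i^{(k)}|^{-2s}$ on $B_i$, and this has $L^1$-norm bounded uniformly in $k$ since $\thetab_i^{(k)}$ varies in a compact set and the singularity can be enclosed in a slightly larger ball on which $|\cdot|^{-2s}$ is integrable. Hence $\|(Q^{(k)})^{-1}\|_{L^s(\mT^d)}$ is uniformly bounded, so $\{(Q^{(k)})^{-1}\}$ is uniformly integrable. Since $Q^{(k)}\to\hat Q$ uniformly and $\hat Q>0$ almost everywhere, $(Q^{(k)})^{-1}\to\hat Q^{-1}$ a.e., and Vitali's convergence theorem gives $(Q^{(k)})^{-1}\to\hat Q^{-1}$ in $L^1(\mT^d)$. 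The main obstacle I expect is the second step: transferring the non-degeneracy of $\hat Q$ at its zeros into a quadratic lower bound for nearby $Q^{(k)}$ with a constant independent of $k$, which requires both the finite-dimensional $C^2$-convergence and the implicit-function argument locating the shifted minimizers $\thetab_i^{(k)}$.
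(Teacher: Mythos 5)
The paper does not actually contain a proof of this proposition: immediately after the statement it says ``The proof of this proposition is given in \cite{ringh2017further}'', so there is no in-paper argument to compare yours against. On its own merits your proof is correct and complete: the nondegenerate-Hessian hypothesis makes the zero set finite, the quadratic lower bound $\hat Q \geq c_i|\thetab-\thetab_i|^2$ together with $d\geq 3$ gives integrability of $r^{-2}r^{d-1}$, and the continuity claim follows from the uniform $L^s$ bound ($1<s<d/2$) plus a.e.\ convergence via Vitali. You also correctly identify and handle the one genuinely delicate point: a naive bound $Q^{(k)}\geq \hat Q-\|Q^{(k)}-\hat Q\|_\infty$ fails near the zeros, so one must relocate the singularity to the perturbed critical points $\thetab_i^{(k)}$ (via the implicit function theorem and the uniform Hessian floor from $C^2$-convergence in the finite-dimensional coefficient space) and use $Q^{(k)}\geq 0$ to drop the value at the new minimizer. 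The only cosmetic issue is the double use of $c_i$ for both the quadratic-floor constant and the Hessian eigenvalue bound, which is harmless.
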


The proof of this proposition is given in \cite{ringh2017further}.  From Propositions~\ref{prp:qQinvLt} and \ref{prop:c2qhatchat2} the following continuity result follows directly.

\begin{corollary}
For all $c\in \fC, p\in\PP, W>0$, the mapping $(c,p,W)\to(\hat q, \hat c)$ is continuous in any point $(\PARa)$ for which the Hessian $ \nabla_{\thetab\thetab} \, \hat Q$ is positive definite in each point where $\hat Q$ is zero.
\end{corollary}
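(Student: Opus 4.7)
The plan is to chain Theorem~\ref{thm:W2qcont} (continuity of $(c,p,W)\mapsto\hat q$) with Proposition~\ref{prp:qQinvLt} ($L^1$ continuity of $q\mapsto Q^{-1}$ under the Hessian hypothesis) and to read off continuity of $\hat c$ directly from the KKT conditions \eqref{eq:opt_cond_relax}. The decisive observation is that \eqref{eq:opt_cond_relax_KKT} expresses $\hat c_\kb$ as the difference between $\hat r_\kb$ and the integral $\int_{\mT^d} e^{i(\kb,\thetab)} P/\hat Q\,\dm$, so the problem splits cleanly into showing that each of these two terms is continuous in $(c,p,W)$.

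First I would fix a base point $(c,p,W)\in\mathcal{P}$ satisfying the Hessian hypothesis on the corresponding $\hat Q$, and take an arbitrary sequence $(\ck,\pk,\Wk)\to(c,p,W)$ in $\mathcal{P}$. Theorem~\ref{thm:W2qcont} immediately yields $\hat q_k\to\hat q$, and the linear KKT identity \eqref{eq:opt_cond_relax_matchError}, $\hat r = c + W(\hat q - e)$, then gives $\hat r_k\to\hat r$ by continuity of its right-hand side. Consequently the entire burden reduces to proving that
\begin{displaymath}
\int_{\mT^d} e^{i(\kb,\thetab)}\,\frac{\Pk}{\hat Q_k}\,\dm \;\longrightarrow\; \int_{\mT^d} e^{i(\kb,\thetab)}\,\frac{P}{\hat Q}\,\dm
\end{displaymath}
for each $\kb\in\Lambda$. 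My plan is to establish $L^1$ convergence $\Pk/\hat Q_k\to P/\hat Q$ on $\mT^d$: the uniform convergence $\Pk\to P$ is immediate since convergence of the coefficient vectors forces uniform convergence of the associated trigonometric polynomials, and this combines with the $L^1$ convergence $\hat Q_k^{-1}\to \hat Q^{-1}$ from Proposition~\ref{prp:qQinvLt} through the triangle-inequality split $\Pk \hat Q_k^{-1} - P\hat Q^{-1} = (\Pk - P)\hat Q_k^{-1} + P(\hat Q_k^{-1} - \hat Q^{-1})$ and an $L^\infty$--$L^1$ H\"older bound on each piece (the factor $\|\hat Q_k^{-1}\|_1$ stays bounded since it converges). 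Multiplication by the unimodular kernel $e^{i(\kb,\thetab)}$ preserves the $L^1$ convergence and hence convergence of the integrals.

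The genuine obstacle is the $L^1$ continuity of $q\mapsto Q^{-1}$ at a point where $\hat Q$ actually vanishes, since this is where the Hessian hypothesis is used to control the rate of vanishing and prevent non-integrable singularities from forming in the limit. Fortunately this is precisely the content of Proposition~\ref{prp:qQinvLt} (proven in \cite{ringh2017further}), which the corollary permits me to invoke as a black box, so after that the remainder of the argument is essentially bookkeeping: every remaining step is a continuous operation applied to already-convergent objects, and no further technical difficulties arise.
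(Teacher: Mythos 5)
Your proposal is correct and follows essentially the same route as the paper: the paper also chains Theorem~\ref{thm:W2qcont} with the $L^1$ continuity of $\hat q\mapsto\hat Q^{-1}$ from Proposition~\ref{prp:qQinvLt}, reading off continuity of $\hat r$ from \eqref{eq:opt_cond_relax_matchError} and of $\hat c$ from \eqref{eq:opt_cond_relax_KKT}. The only difference is that you spell out the H\"older/triangle-inequality bookkeeping for $\Pk/\hat Q_k\to P/\hat Q$ in $L^1$, which the paper leaves implicit by stating the corollary "follows directly."
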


The condition  $p \in \PP$ is needed, since we may have pole-zero cancelations in $P/\hat{Q}$ when $p\in\partial\PP$, and then $\int_{\mT^d} P/\hat{Q}\dm$ may be finite even if $\hat{Q}^{-1}\not\in L_1$. The following example shows that this may lead to discontinuities in the map $p\mapsto \hat{c}$ (cf. Example 3.8 in \cite{karlsson2015themultidimensional}).

\begin{example}
Let
\begin{displaymath}
c = \begin{bmatrix}1 \\ 3 \\ 1\end{bmatrix} = 
\begin{bmatrix}0 \\ 2 \\ 0\end{bmatrix} +\begin{bmatrix}1 \\ 1 \\ 1\end{bmatrix}= 
\int_{-\pi}^\pi\begin{bmatrix}e^{-i\theta} \\ 1 \\ e^{i\theta}\end{bmatrix} \left( 2 \dm +\derivd\nu_0\right),
\end{displaymath}
where $\dm=d\theta/2\pi$ and $\derivd\nu_0$ is the singular measure $\delta_{0}(\theta)d\theta$ with support in $\theta=0$. Since $\dmu:=2 \dm +\derivd\nu_0$ is positive, $c \in \CPC$. Moreover, since
\[
T_{c} = \begin{bmatrix}
3 & 1\\
1 & 3
\end{bmatrix}
> 0
\]
we have that $c \in \CP$ (see, e.g., \cite[p. 2853]{lindquist2013thecirculant}). Thus we know \cite[Corollary 2.3]{ringh2015multidimensional} that for each $p \in \PP$ we have a unique $\hat{q} \in \PP$ such that $P/ \hat{Q}$ matches $c$, and hence $\hat{c} = 0$.  However, for $p = 2 (-1, 2, -1)'$ we have that $\hat{q} = (-1, 2, -1)'$ and $\hat{c} = (1, 1, 1)'$ (Theorem~\ref{thm:rational}). Then, for the sequence $(p_k)$, where $p_k=2 (-1, 2 + 1/k, -1) \in \PP$, we have $\hat{c}_k = 0$, so 
\[
\lim_{k \to \infty} \hat{c}_k = \lim_{k \to \infty}
\begin{bmatrix}
0 \\ 0 \\ 0
\end{bmatrix} \neq 
\begin{bmatrix}
1 \\ 1 \\ 1
\end{bmatrix},
\]
which shows that the mapping $p \to \hat{c}$ is not continuous.
\end{example}

\section{Tuning to avoid a singular part}\label{sec:no_singular_part}

In many situations we prefer solutions where there is no singular measure $d\nu$ in the optimal solution.  An interesting question is therefore for what prior $P$ and  weight $W$ we obtain $\derivd\hat\nu=0$. The following result provides a sufficient condition. 

\begin{proposition}\label{boundlprop}
Let $c\in\fC$  and let $p$ be the Fourier coefficients of the prior $P$. If the weight satisfies\footnote{Here $\|A\|_{2,1}=\max_{c\neq 0}\|Ac\|_1/\|c\|_2$ denotes the subordinate (induced) matrix norm.} 
\begin{equation}\label{eq:supnormbound}
\|W^{-1/2}\|_{2,1}<\|c-p\|_{W^{-1}}^{-1},
\end{equation}
 then the optimal solution of \eqref{eq:primal_relax} is on the form  $$\derivd\hat{\mu}=(P/\hat{Q}) dm,$$ i.e., the singular part $\derivd\hat{\nu}$ vanishes.
\end{proposition}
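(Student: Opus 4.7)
The plan is to show that, under the hypothesis, the solution of the restricted problem in which we force $\derivd\nu = 0$ is also optimal for the full problem \eqref{eq:primal_relax}, so that $\derivd\hat{\nu} = 0$. I first consider the restricted problem
\begin{displaymath}
\min_{\Phi \geq 0}\ F_r(\Phi) := \kl(P\dm, \Phi\dm) + \tfrac{1}{2}\|r_\Phi - c\|_{W^{-1}}^2, \qquad (r_\Phi)_\kb := \int_{\mT^d} \expfunk\,\Phi\dm,
\end{displaymath}
which is strictly convex and coercive, hence admits a unique minimizer $\Phi^*$ with moment vector $r^*$. Plugging the feasible point $\Phi = P$ into $F_r$ gives $F_r(\Phi^*) \leq F_r(P) = \tfrac{1}{2}\|p-c\|_{W^{-1}}^2$, and since the Kullback--Leibler term is nonnegative this yields the a priori estimate $\|r^* - c\|_{W^{-1}} \leq \|p - c\|_{W^{-1}}$.

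Next, letting $F(\Phi, \derivd\nu)$ denote the full primal objective of \eqref{eq:primal_relax}, I compute the directional derivative at $(\Phi^*, 0)$ along $(0, \derivd\nu_0)$, where $\derivd\nu_0$ is an arbitrary nonnegative singular measure with moments $\tilde c \in \CPC \setminus \{0\}$. Only the total-mass contribution of $\kl$ and the data-matching term produce nonzero derivatives, giving
\begin{displaymath}
\left.\tfrac{d}{dt} F(\Phi^*, t\,\derivd\nu_0)\right|_{t=0^+} = \tilde c_\nollb + \langle \tilde c, r^* - c\rangle_{W^{-1}} = \tilde c^{\,T}\bigl(e + W^{-1}(r^* - c)\bigr).
\end{displaymath}

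The crucial step is to verify that $v := e + W^{-1}(r^* - c)$ lies in $\PP$. Its associated trigonometric polynomial is $1 + V$, with $V$ having coefficients $W^{-1}(r^* - c)$. Using $\|V\|_\infty \leq \|W^{-1}(r^* - c)\|_1$ and the factorization $W^{-1} = W^{-1/2}\cdot W^{-1/2}$, the definition of the subordinate norm gives
\begin{displaymath}
\|V\|_\infty \leq \|W^{-1/2}\|_{2,1}\,\|r^* - c\|_{W^{-1}} \leq \|W^{-1/2}\|_{2,1}\,\|p - c\|_{W^{-1}} < 1
\end{displaymath}
by the first step and the hypothesis \eqref{eq:supnormbound}. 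Hence $1 + V > 0$ on $\mT^d$, so $v \in \PP$. By the polarity $\PP = \{v \in \fC : \langle v, c\rangle > 0 \text{ for all } c \in \CPC \setminus \{0\}\}$, the directional derivative displayed above is strictly positive whenever $\derivd\nu_0 \not\equiv 0$.

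Combined with the fact that $\Phi^*$ is restricted-optimal, so the derivative of $F$ at $(\Phi^*, 0)$ along any feasible direction $(\delta\Phi, 0)$ is nonnegative, convexity of $F$ implies that $(\Phi^*, 0)$ is the global minimum of the full primal problem. Therefore $\derivd\hat{\mu} = \Phi^*\dm$ with no singular part, and the KKT representation $\Phi^* = P/\hat Q$ from Theorem~\ref{thm:softcontraints} yields the stated form. The main obstacle I expect is executing the chain of norm inequalities carefully so that the strict inequality is preserved end to end, which is exactly what upgrades the conclusion from mere membership in $\PPC$ to membership in the open cone $\PP$ and thereby forces the directional derivative in singular directions to be strictly (not just weakly) positive.
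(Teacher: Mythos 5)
Your variational mechanism is exactly the right one, and your norm chain $\|V\|_\infty \le \|W^{-1}(r^*-c)\|_1 \le \|W^{-1/2}\|_{2,1}\|r^*-c\|_{W^{-1}} \le \|W^{-1/2}\|_{2,1}\|p-c\|_{W^{-1}} < 1$ is precisely the paper's Lemma~\ref{boundlemma}. But there is a genuine gap at the very first step: you assert that the restricted problem (over absolutely continuous measures only) ``is strictly convex and coercive, hence admits a unique minimizer $\Phi^*$.'' In this infinite-dimensional setting that inference is invalid: coercivity only bounds a minimizing sequence in $L^1(\mT^d)$, whose bounded sets are not weakly sequentially compact, and a minimizing sequence of densities can concentrate and converge weak-$*$ to a measure with a nonzero singular part. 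Indeed, since the restricted infimum coincides with the optimal value of \eqref{eq:primal_relax} (mollify any singular part; the term $\int P\log(P/\Phi)\,\dm$ only decreases and the moments converge), the restricted problem has a minimizer \emph{if and only if} some optimal solution of \eqref{eq:primal_relax} has vanishing singular part --- which is exactly the conclusion you are trying to prove. So the existence of $\Phi^*$ is not a technicality; it carries the entire content of the proposition, and the whole reason Theorem~\ref{thm:softcontraints} allows a singular $\derivd\hat\nu$ is that this existence can fail.

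The repair is to anchor the same computation at the object whose existence is already guaranteed: take the optimal $(\derivd\hat\mu,\hat r)$ of \eqref{eq:primal_relax} from Theorem~\ref{thm:softcontraints}, write $\derivd\hat\mu=\hat\Phi\,\dm+\derivd\hat\nu$, and evaluate the directional derivative of the primal objective at $(\derivd\hat\mu,\hat r)$ in the feasible direction $-\derivd\hat\nu$ (feasible since $(1-\varepsilon)\derivd\hat\nu\ge 0$). Your a priori estimate, obtained by comparison with the feasible point $P\dm$, holds verbatim for $\hat r$: $\|\hat r - c\|_{W^{-1}}\le\|p-c\|_{W^{-1}}$. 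The same inequality chain then gives $\|W^{-1}(\hat r-c)\|_1<1$, and since $|\delta r_\kb|\le\int_{\mT^d}\derivd\hat\nu$ for $\delta r_\kb=-\int_{\mT^d}\expfunk\derivd\hat\nu$, the derivative equals $-\int_{\mT^d}\derivd\hat\nu+\delta r^*W^{-1}(\hat r-c)<0$ whenever $\derivd\hat\nu\ne 0$, contradicting optimality. This is the paper's proof; everything in your argument after the existence claim (the derivative formula, the strict inequality $1+V>0$, and the pairing $\langle\tilde c, v\rangle=\int(1+V)\,\derivd\nu_0>0$) transfers directly once you make this change of anchor point.
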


\begin{remark}
Note that for a scalar weight, $W=\lambda I$ the bound \eqref{eq:supnormbound} simplifies to 
\begin{equation}
\label{eq:scalarbound}
\lambda> |\Lambda|^{1/2}\|c-p\|_2,
\end{equation}
 where $|\Lambda|$ is the cardinality of index set $\Lambda$.
\end{remark}

For the proof of Proposition~\ref{boundlprop} we need the following lemma.

\begin{lemma}\label{boundlemma}
Condition \eqref{eq:supnormbound} implies 
\begin{equation}\label{eq:newnormbound}
\|W^{-1}(\hat{r}-c)\|_1<1,
\end{equation}
where $\hat{r}$ is the optimal value of $r$ in problem \eqref{eq:primal_relax}.
\end{lemma}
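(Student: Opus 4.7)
The plan is to apply the KKT identity \eqref{eq:opt_cond_relax_matchError} to reduce \eqref{eq:newnormbound} to a bound on $\|\hat q - e\|_1$, and then to control this norm in two steps: a linear-algebra step that produces a $W$-weighted Euclidean norm, and a primal-comparison step that bounds this weighted norm via a convenient trial measure.

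First, since \eqref{eq:opt_cond_relax_matchError} gives $W^{-1}(\hat r - c) = \hat q - e$, the inequality \eqref{eq:newnormbound} is equivalent to $\|\hat q - e\|_1 < 1$. Writing $\hat q - e = W^{-1/2}\bigl(W^{1/2}(\hat q - e)\bigr)$ and using the definition of the subordinate norm $\|\cdot\|_{2,1}$ immediately yields
\[
\|\hat q - e\|_1 \;\leq\; \|W^{-1/2}\|_{2,1}\,\|\hat q - e\|_W,
\]
so in view of \eqref{eq:supnormbound} it will suffice to establish the sharper bound $\|\hat q - e\|_W \leq \|c - p\|_{W^{-1}}$.

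To obtain this sharp bound, I would use the identity $\|\hat r - c\|_{W^{-1}}^2 = \|\hat q - e\|_W^2$, which is a direct consequence of \eqref{eq:opt_cond_relax_matchError}, and compare the optimal primal value with the primal objective evaluated at the trial measure $\dmu = P\,\dm$. This trial measure is primal-feasible, has moments $r = p$, and satisfies $\kl(P\,\dm, P\,\dm) = 0$, so its primal objective equals $\tfrac12 \|p - c\|_{W^{-1}}^2$. Combining this with optimality of $(\dmuhat, \hat r)$ from Theorem~\ref{thm:softcontraints} and the nonnegativity of the Kullback--Leibler divergence then gives
\[
\tfrac12 \|\hat q - e\|_W^2 \;\leq\; \kl(P\,\dm, \dmuhat) + \tfrac12 \|\hat r - c\|_{W^{-1}}^2 \;\leq\; \tfrac12 \|p - c\|_{W^{-1}}^2,
\]
which is exactly the estimate required. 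Chaining the two bounds and invoking \eqref{eq:supnormbound} delivers $\|W^{-1}(\hat r - c)\|_1 < 1$.

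The main subtlety is obtaining the sharp constant in the second step. A purely dual argument that bounds $\mJ(\hat q) \leq \mJ(e) = c_{\nollb}$ and applies the Jensen-type inequality $\log \hat Q \leq \hat Q - 1$ produces only $\|\hat q - e\|_W \leq 2\|c - p\|_{W^{-1}}$, which is a factor of two too weak to reach \eqref{eq:newnormbound}. Exploiting the primal feasibility of the explicit trial measure $P\,\dm$, rather than reasoning through the dual, is what removes this spurious factor and delivers the required bound.
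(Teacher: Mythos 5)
Your proof is correct and follows essentially the same route as the paper's: both arguments compare the optimal primal value with the feasible trial point $(P\,\dm,\,p)$, use $\kl(P\dm,P\dm)=0$ and nonnegativity of $\kl$ to get $\|\hat r - c\|_{W^{-1}}\le\|p-c\|_{W^{-1}}$, and then apply the subordinate-norm estimate $\|W^{-1}(\hat r-c)\|_1\le\|W^{-1/2}\|_{2,1}\|\hat r-c\|_{W^{-1}}$. The detour through the KKT identity $\hat q - e = W^{-1}(\hat r - c)$ is harmless but unnecessary, since the paper performs the identical estimate directly on $\hat r - c$.
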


\begin{proof}
Let 
\begin{equation}
\label{eq:primalcost}
\mathbb{I}(\dmu,r):=\kl (P\dm, \dmu) + \frac{1}{2}\|r - c\|_{W^{-1}}^2 
\end{equation}
be the cost function of problem \eqref{eq:primal_relax}, and let $(\derivd\hat{\mu},\hat{r})$ be the optimal solution. Clearly, $\mathbb{I}(Pdm,p)\geq \mathbb{I}(\derivd\hat{\mu},\hat{r})$, and consequently 
\begin{displaymath}
\|\hat{r}-c\|_{W^{-1}} \leq \|p-c\|_{W^{-1}},
\end{displaymath}
since $\kl (P\dm, \derivd\hat{\mu})\geq 0$ and $\kl (P\dm,P\dm)=0$. Therefore,
\begin{align*}
\|W^{-1}(\hat{r} - c)\|_1
&\le \|W^{-1/2}\|_{2,1} \|W^{-1/2}(\hat{r} - c)\|_2\\
&= \|W^{-1/2}\|_{2,1}\|\hat{r} - c\|_{W^{-1}}\\
&\le \|W^{-1/2}\|_{2,1}  \|p - c\|_{W^{-1}},
\end{align*}
 which is less than one by \eqref{eq:supnormbound}. Hence  \eqref{eq:supnormbound} implies \eqref{eq:newnormbound}.
\end{proof}

\begin{proof}[{Proof of Proposition~\ref{boundlprop}}]
Suppose the optimal solution has a nonzero singular part $\derivd\hat{\nu}$, and form the  directional derivative of \eqref{eq:primalcost} at $(\derivd\hat{\mu},\hat{r})$ in the direction $-\derivd\hat{\nu}$. Then $\Phi$ in \eqref{eq:decomp} does not vary, and 
\begin{displaymath}
\delta\mathbb{I}(\derivd\hat{\mu},\hat{r};-\derivd\hat{\nu},\delta r)=-\int_{\mT^d}\derivd\hat{\nu} +\delta r^*W^{-1}(\hat{r} - c),
\end{displaymath}
where
\begin{displaymath}
\delta r_\kb = -\int_{\mT^d} \expfunk\derivd\hat{\nu}.
\end{displaymath}
Then $|\delta r_\kb|\leq \int \derivd\hat{\nu}$ for all $\kb\in\Lambda$, and hence
\begin{displaymath}
|\delta r^*W^{-1}(\hat{r} - c)|\leq \|W^{-1}(\hat{r}-c)\|_1 \int_{\mT^d} \derivd\hat{\nu} <\int_{\mT^d} \derivd\hat{\nu},
\end{displaymath}
by \eqref{eq:newnormbound} (Lemma~\ref{boundlemma}). Consequently,
\begin{displaymath}
\delta\mathbb{I}(\derivd\hat{\mu},\hat{r};-\derivd\hat{\nu},\delta r)<0
\end{displaymath}
whenever $\derivd\hat{\nu}\ne 0$, which contradicts optimality. Hence $\derivd\hat{\nu}$ must be zero. 
\end{proof}

The condition of Proposition~\ref{boundlprop} is just sufficient and is in general conservative. To illustrate this, we consider a simple one-dimensional example ($d=1$). 

\begin{example}\label{ex:singular}
Consider a covariance sequence $(1,c_1)$, where $c_1\ne 0$, and a prior $P(e^{i\theta})=1-\cos\theta$, and set  $W =\lambda I$. Then, since 
\begin{displaymath}
c=\begin{pmatrix} c_1\\ 1 \\c_1 \end{pmatrix} \quad \text{and} \quad p=\begin{pmatrix} -1/2\\ 1 \\-1/2 \end{pmatrix},
\end{displaymath}
the sufficient condition \eqref{eq:scalarbound} for an absolutely continuous solution is 
\begin{equation}
\label{bound}
\lambda > \sqrt{\tfrac32}\, |1+2c_1|.
\end{equation}
We want to investigate how restrictive this condition is. 

Clearly we will have a singular part  if and only if $\hat{Q} = q_0 P$, in which case we have 
\begin{displaymath}
\hat{q}=q_0\begin{pmatrix} -1/2\\ 1 \\-1/2 \end{pmatrix}\quad \text{and} \quad\hat c=\beta\begin{pmatrix} 1\\ 1 \\1
\end{pmatrix}
\end{displaymath}
for some $\beta >0$. In fact, it follows from $\langle \hat{c}, \hat{q} \rangle = 0$ in \eqref{eq:opt_cond_relax_slack} that $\hat{c}_1=\hat{c}_0$. Moreover, \eqref{eq:opt_cond_relax_KKT} and  \eqref{eq:opt_cond_relax_matchError} yield
\begin{align*}
\hat{r} =&\int \frac{P}{\hat{Q}}\begin{pmatrix} e^{i\theta}\\ 1 \\e^{-i\theta}
\end{pmatrix} dm +\hat c=
\begin{pmatrix} \beta\\ \beta+1/q_0 \\\beta
\end{pmatrix}\\
c = &\,\hat{r}-\lambda(q-e)=\begin{pmatrix} \beta+\lambda q_0/2,\\ \beta+1/q_0-\lambda  q_0+\lambda  \\\beta+\lambda  q_0/2
\end{pmatrix}.
\end{align*}
By eliminating $\beta$, we get 
\[
c_1=1-\frac{1}{q_0}-\frac{3}{2}q_0 \lambda  +\lambda,
\]
and solving for $q_0$ yields
\[
q_0=\frac{\lambda  +c_1-1+ (6\lambda  + (\lambda  +c_1-1)^2 )^{1/2}}{3\lambda }
\]
(note that $\lambda >0$ and $q_0>0$). Again, using \eqref{eq:opt_cond_relax_matchError} we have
\begin{align*}
\beta&=c_1-\lambda  q_0/2\\
&=c_1-\frac{1}{6}\left( \lambda  +c_1-1+ (6\lambda  + (\lambda  +c_1-1)^2)^{1/2}\right).
\end{align*}
We are interested in $\lambda$ for which $\beta >0$, i.e.,
\begin{equation}
\label{eq:inex}
6c_1-(\lambda  +c_1-1)>(6\lambda  + (\lambda  +c_1-1)^2)^{1/2},
\end{equation}
which is equivalent to the two conditions
\begin{subequations}\label{twoconditions}
\begin{equation}
1+5c_1>\lambda 
\end{equation}
\begin{equation}
2c_1(1+2c_1)>\lambda  (1+2c_1),
\end{equation}
\end{subequations}
which could be seen by noting that the left member of \eqref{eq:inex} must be positive and then squaring both sides.
To find out whether this has a solution we consider three cases, namely $c_1<-1/2$, $-1/2<c_1<0$, and $c_1>0$. For $c_1<-1/2$, condition \eqref{twoconditions} becomes 
$2c_1<\lambda <1+5c_1$,
which is impossible since $1+5c_1<2c_1$. Condition \eqref{twoconditions} cannot be satisfied when $-1/2<c_1<0$, because then $\lambda$ would be negative which contradicts $\lambda >0$. When $c_1>0$,  Condition \eqref{twoconditions} is satisfied if and only if $\lambda <2c_1$.

Consequently, there is no singular part if either $c_1$ is negative or $$\lambda\geq 2c_1.$$This shows that the condition \eqref{bound} is not tight.
\end{example}

\section{Covariance extension with hard constraints}\label{sec:approxhard}
The alternative optimization problem  \eqref{eq:primal_relax_new} amounts to minimizing $\kl (P\dm, \dmu)$ subject to the hard constraint  $\|r - c \|_{W^{-1}}^2 \le 1$, where $r_\kb = \int_{\mT^d} \expfunk \dmu$. Hard constraints of this type were used in \cite{schott1984maximum} in the context of entropy maximization. In general the data $c\not\in\CPC$, whereas, by definition, $r\in\CPC$. Consequently, a necessary condition for the existence of a solution is that $\CPC$ and the strictly convex set
\begin{equation}
\label{eq:SW}
\SW=\{ r\mid \|r - c \|_{W^{-1}}^2 \le 1\}
\end{equation}
have a nonempty intersection. In the case that $\SW\cap\CPC\subset \partial\CP$, this intersection only contains one point \cite[Section 3.12]{luenberger1969optimization}. In this case, any solution to the moment problem contains only a singular part (Proposition~\ref{prop:singular}), and then the primal problem \eqref{eq:primal_relax_new} has a unique feasible point $r$, but the objective function is infinite. Moreover, $\kl (P\dm, \dmu)\geq 0$ is strictly convex with $\kl (P\dm, P\dm)= 0$, so if $p\in\SW$ then \eqref{eq:primal_relax_new} has the trivial unique optimal solution $\dmuhat =P\dm$, and $\hat{r}=p$. The remaining case, $p\not\in\SW\cap\CP\ne\emptyset$ needs further analysis. 

To this end, setting $\dmu=\Phi\dm+\derivd\nu$, we consider the Lagrangian 
\begin{align*}
\mathcal{L}(\Phi,\derivd\nu, r, q, \gamma) = & \; -\kl (P\dm, \dmu)
+ \sum_{\kb \in \Lambda} q_\kb^* \left( r_\kb - \int_{\mT^d} \expfunk \dmu(\thetab) \right) \\
& \; +\gamma \left(1- \|r - c \|_{W^{-1}}^2 \right)\\
 = & \; -\kl (P\dm, \dmu) + \langle r,q\rangle - \int_{\mT^d} Q \dmu +\gamma \left(1- \|r - c \|_{W^{-1}}^2 \right), 
\end{align*}
where $\gamma\ge 0$. Therefore, in view of \eqref{D(Pdm,dmu)}, 
\begin{align}\label{eq:Lagrangian2}
\mathcal{L}(\Phi,\derivd\nu, r, q,\gamma) = &\; \int_{\mT^d}P\log\Phi dm -  \int_{\mT^d}Q\Phi dm -  \int_{\mT^d} Q\derivd\nu -\int_{\mT^d}P(\log P -1)dm \notag \\
&\; +\langle r,q-e\rangle +\gamma \left(1-\|r - c \|_{W^{-1}}^2 \right) , 
\end{align}
where, as before,  $e := [e_\kb]_{\kb \in \Lambda}$, $e_{\boldsymbol 0} = 1$ and $e_{\kb} = 0$ for $\kb \in \Lambda \setminus \{ \boldsymbol 0 \}$, and hence $r_\nollb=\langle r,e\rangle$. This Lagrangian differs from that in \eqref{eq:Lagrangian} only in the last term that does not depend on $\Phi$. Therefore, in deriving the dual functional $$\varphi(q,\gamma)=\sup_{\Phi\geq 0,\derivd\nu\geq 0,r}\mathcal{L}(\Phi,\derivd\nu, r, q,\gamma),$$  we only need to consider $q\in\PPC\setminus\{0\}$, and a first variation in $\Phi$ yields \eqref{eq:rat} and  \eqref{eq:intQdnu=0}. The directional derivative 
\begin{displaymath}
\delta\mathcal{L}(\Phi,\derivd\nu, r, q,\gamma;\delta r)= q-e + 2\gamma W^{-1}(r-c)
\end{displaymath}
is zero for 
\begin{equation}
\label{eq:q2rhard}
r=c+\frac{1}{2\gamma}W(q-e).
\end{equation} 
Thus inserting \eqref{eq:rat} and \eqref{eq:intQdnu=0} and \eqref{eq:q2rhard} into  \eqref{eq:Lagrangian2} yields the dual functional
\begin{equation}
\label{eq:harddual}
\varphi(q,\gamma)=\langle c, q\rangle - \int_{\mT^d} P\log Q\,\dm +\frac{1}{4\gamma}\|q-e\|_W^2 +\gamma -c_\nollb
\end{equation}
to be minimized over all $q\in\PPC\setminus\{0\}$ and $\gamma\geq 0$. Since $\frac{d\varphi}{d\gamma}= -\frac{1}{4\gamma^2}\|q-e\|_W^2 +1$, there is a stationary point
\begin{equation}
\label{eq:gammaopt}
\gamma =\frac12\|q-e\|_W
\end{equation}
that is nonnegative as required. 

For $\gamma=0$ we must have $q=e$, and consequently $\varphi(q,\gamma)$ tends to zero as $\gamma\to 0$. By weak duality zero is therefore a lower bound for the minimization problem \eqref{eq:primal_relax_new}, and $\kl (P\dm, \dmuhat)=0$, which corresponds to the trivial unique solution $\dmuhat=P\dm$ and $\hat{r}=p$ mentioned above. This solution is only feasible if $p\in\SW$. 
Therefore we can restrict our attention to the case $\gamma>0$. Inserting \eqref{eq:gammaopt} into \eqref{eq:harddual} and removing the constant term $c_\nollb$, we obtain the modified dual functional
\begin{equation}
\label{eq:moddual}
\mJ(q)=\langle c, q\rangle - \int_{\mT^d} P\log Q\,\dm + \|q-e\|_W.
\end{equation}
Moreover, combining \eqref{eq:q2rhard} and \eqref{eq:gammaopt}, we obtain
\begin{equation}
\label{eq:boundaryr}
\|r-c\|_{W^{-1}}=1\, ,
\end{equation}
which also follows from complementary slackness since $\gamma>0$ and restricts $r$ to the boundary of $\SW$. 

\begin{theorem}\label{thm:hardcontraints}
Suppose that $p\in\PPC \setminus\{0\}$, $p\not\in\SW$ and $\SW\cap\CP\ne\emptyset$. Then the modified dual problem 
\begin{equation}
\label{eq:moddualproblem}
\min_{q\in\PPC}\mJ(q)
\end{equation} 
has a unique solution $\hat{q}\in\PPC \setminus\{0\}$. Moreover, there exists a unique $\hat{r}\in\CP$, a unique $\hat{c} \in \partial \CP$ and a (not necessarily unique) nonnegative singular measure $\derivd\hat\nu$ with support 
\begin{equation}
\label{supp(dnu)hard}
\supp(\derivd\hat\nu) \subseteq \{ \thetab \in \mathbb{T}^d  \mid \hat{Q}(e^{i\thetab}) = 0 \}
\end{equation}
such that
\begin{subequations}\label{rhat2+chat2}
\begin{align}
& \hat{r}_{\kb} = \int_{\mathbb{T}^d} \expfunk \left( \frac{P}{\hat Q}  \dm  + \derivd\hat\nu \right)\, \text{ for all } \kb \in \Lambda, \label{rhat2}\\
& \hat{c}_\kb = \int_{\mathbb{T}^d} \expfunk \derivd\hat\nu, \text{ for all } \kb \in \Lambda \, ,\ \label{chat2}
\end{align}
\end{subequations}
and the measure 
\begin{equation}
\label{eq:optdmu(hard)}
\derivd\hat\mu(\thetab) = \frac{P(e^{i\thetab})}{\hat{Q}(e^{i\thetab})}\dm(\thetab) + \derivd\hat\nu(\thetab)
\end{equation}
is an optimal solution to the primal problem \eqref{eq:primal_relax_new}. Moreover, 
\begin{equation}
\label{eq:rhat&c}
\|\hat{r}-c\|_{W^{-1}}=1\, ,
\end{equation}
and $\derivd\hat\nu$ can be chosen with support in at most $|\Lambda|-1$ points. 

If $p\in\SW$, the unique optimal solution is $\dmuhat=P\dm$, and then $\hat{r}=p$. If $\SW\cap\CPC\subset \partial\CP$, any solution to the moment problem will have only a singular part.  Finally, if $\SW\cap\CPC=\emptyset$, then the problem \eqref{eq:primal_relax_new} will have no solution. 
\end{theorem}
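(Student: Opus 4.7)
The plan is to follow the architecture of the proof of Theorem~\ref{thm:softcontraints}, the new ingredient being the non-smooth norm term $\|q-e\|_W$ in the modified dual \eqref{eq:moddual}. I would first decompose $\mJ=\mJ_1+\|q-e\|_W$, where $\mJ_1(q):=\langle c,q\rangle-\int_{\mT^d}P\log Q\,\dm$ is the exact-problem dual functional. By \cite[Lemmas~3.1--3.2]{ringh2015multidimensional}, $\mJ_1$ is strictly convex and lower semicontinuous with compact sublevel sets on $\PPC$; adding the continuous convex function $\|q-e\|_W$ preserves these properties, so $\mJ$ attains a unique minimum $\hat{q}\in\PPC$, and $\hat{q}\neq 0$ since $Q\equiv 0$ forces the log term to $+\infty$.

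Next comes a subdifferential dichotomy. The subdifferential of $\|\,\cdot\,-e\|_W$ at $e$ equals the closed dual unit ball $\{v:\|v\|_{W^{-1}}\leq 1\}$, so $0\in\partial\mJ(e)$ if and only if $\|\nabla\mJ_1(e)\|_{W^{-1}}=\|c-p\|_{W^{-1}}\leq 1$, i.e., if and only if $p\in\SW$. The main-case hypothesis $p\notin\SW$ therefore forces $\hat{q}\neq e$; $\mJ$ is smooth there, and $\hat{\gamma}:=\tfrac{1}{2}\|\hat{q}-e\|_W>0$. Setting $\hat{r}:=c+\tfrac{1}{2\hat{\gamma}}W(\hat{q}-e)$ enforces \eqref{eq:rhat&c} automatically, and the first-order condition for $\mJ$ at $\hat{q}$ becomes the gradient condition of the exact dual with $c$ replaced by $\hat{r}$. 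The argument of \cite[sect.~3.1.5]{ringh2015multidimensional} (used analogously in Theorem~\ref{thm:softcontraints}) then produces $\hat{c}\in\CPC$ and a discrete $\derivd\hat\nu$ with at most $|\Lambda|-1$ atoms satisfying \eqref{rhat2}--\eqref{chat2}; complementary slackness $\langle\hat{c},\hat{q}\rangle=\int_{\mT^d}\hat{Q}\,\derivd\hat\nu=0$ then forces $\hat{c}\in\partial\CP$ and $\supp(\derivd\hat\nu)\subseteq\{\hat{Q}=0\}$. Primal optimality of $\derivd\hat\mu$ follows from the weak-duality chain $\mI(\dmu,r)\geq-\mathcal{L}(\Phi,\derivd\nu,r,\hat{q},\hat{\gamma})\geq-\varphi(\hat{q},\hat{\gamma})=\mI(\derivd\hat\mu,\hat{r})$ for every primal-feasible $(\dmu,r)$, exactly the closing step of Theorem~\ref{thm:softcontraints}.

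The degenerate cases are settled directly. If $p\in\SW$, then $\derivd\hat\mu=P\,\dm$ is primal feasible with $\kl(P\,\dm,P\,\dm)=0$ and hence optimal; strict convexity of $\kl(P\,\dm,\,\cdot\,)$ provides uniqueness. If $\SW\cap\CPC\subseteq\partial\CP$, strict convexity of $\SW$ combined with convexity of $\CPC$ reduces the feasible set to a single boundary point \cite[Sec.~3.12]{luenberger1969optimization}, which by Proposition~\ref{prop:singular} is represented only by a purely singular measure with infinite $\kl$-value. Finally, $\SW\cap\CPC=\emptyset$ leaves no nonnegative measure generating an $r\in\SW$, so the primal is infeasible.

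The principal obstacle will be the non-differentiability of $\|q-e\|_W$ at $e$ coupled with the vanishing of the Lagrange multiplier $\gamma$ at the same point. The subdifferential dichotomy in the second paragraph is what cleanly separates the binding-constraint regime ($p\notin\SW$, $\hat{\gamma}>0$, $\|\hat{r}-c\|_{W^{-1}}=1$) from the slack-constraint regime ($p\in\SW$, $\hat{q}=e$, $\derivd\hat\mu=P\,\dm$), thereby making it legitimate to import the exact-case KKT machinery of \cite{ringh2015multidimensional,karlsson2015themultidimensional} as a black box in the nontrivial regime.
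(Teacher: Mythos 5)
Your proposal is correct in its overall architecture and several steps are genuinely cleaner than the paper's (the subdifferential dichotomy at $q=e$, which identifies $0\in\partial\mJ(e)\iff\|c-p\|_{W^{-1}}\le 1\iff p\in\SW$, is a neat way to separate the binding and slack regimes, and deducing uniqueness directly from strict convexity of $-\int_{\mT^d}P\log Q\,\dm$ plus convexity of $\|q-e\|_W$ avoids the paper's Hessian computation for $\varphi(q,\gamma)$). However, there is a genuine gap in the existence step. You assert that $\mJ_1(q)=\langle c,q\rangle-\int_{\mT^d}P\log Q\,\dm$ has compact sublevel sets by \cite[Lemmas 3.1--3.2]{ringh2015multidimensional}, but that lemma requires $c\in\CP$. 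In the present theorem $c\in\fC$ is arbitrary data, and when $c\not\in\CPC$ there exists $q\in\PPC$ with $\langle c,q\rangle<0$, so $\mJ_1(\lambda q)\to-\infty$ as $\lambda\to\infty$ and $\mJ_1$ is not even bounded below. Adding $\|q-e\|_W$ does not rescue this automatically: unlike the soft-constrained case, where the quadratic penalty $\tfrac12\|q-e\|_W^2$ dominates any adverse linear term, here the penalty grows only linearly and cannot by itself dominate $\langle c,q\rangle$. Note that your main-case argument never invokes the hypothesis $\SW\cap\CP\ne\emptyset$, which is a warning sign, since the theorem itself states that the problem has no solution when $\SW\cap\CPC=\emptyset$ (and correspondingly the dual is unbounded below).

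This coercivity question is precisely where the paper spends most of its effort. The argument there is: take $\tilde r\in\SW\cap\CP$, use Theorem~\ref{thm:rational} to produce a primal-feasible measure $\derivd\tilde\mu$, and conclude by weak duality that $\mJ(q)\ge-\kl(P\dm,\derivd\tilde\mu)$ for all $q\in\PPC$; this lower bound forces $h(q):=\langle c,q-e\rangle+\|q-e\|_W>0$ on all of $\PPC$ (otherwise $\mJ(\lambda q)\to-\infty$), whence $h$ has a positive minimum $\varepsilon$ on the compact set $\{q\in\PPC\setminus\{0\}:\|q-e\|_\infty=1\}$, giving $h(q)\gtrsim\varepsilon\|Q\|_\infty/|\Lambda|$ and letting linear growth beat the logarithmic growth of $\int_{\mT^d}P\log\|Q\|_\infty\,\dm$. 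You need to supply this (or an equivalent) coercivity argument; once the sublevel sets of $\mJ$ are shown to be compact, the rest of your proof goes through essentially as written and matches the paper's treatment of the KKT conditions, the primal optimality chain, and the degenerate cases.
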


\begin{proof}
We begin by showing that the functional $\mJ$ has a minimum under the stated conditions. To this end,  
we first establish that the functional $\mJ$ has compact sublevel sets $\mJ^{-1}(-\infty,\rho]$, i.e., $\|q\|_\infty$ is bounded for all $q$ such  that  $\mJ(q)\leq\rho$, where $\rho$ is sufficiently large for the sublevel set to be nonempty. The functional \eqref{eq:moddual} can be decomposed in a linear and a logarithmic term as
\begin{displaymath}
\mJ(q)=h(q) - \int_{\mT^d} P\log Q\,\dm +c_\nollb\, ,
\end{displaymath}
where $h(q):= \langle c, q-e\rangle +\|q-e\|_W$. The integral term will tend to $-\infty$ as $\|q\|_\infty\to \infty$. 
Therefore we need to have the linear term to tend to $+\infty$ as $\|q\|_\infty\to \infty$, in which case we can appeal to the fact that linear growth is faster than logarithmic growth.  However, if $c\not\in\CPC$ as is generally assumed, there is a $q\in\PPC$ such that  $\langle c, q\rangle< 0$, so we need to ensure that the positive term $\|q-e\|_W$ dominates. 

Let $\tilde{r}\in\SW\cap\CP\ne\emptyset$. Then,  by Theorem~\ref{thm:rational},  there is a positive measure $\derivd\tilde{\mu}=\tilde{\Phi}dm+ \derivd\tilde{\nu}$ with a nonzero $\tilde{\Phi}$ such that 
\begin{displaymath}
\tilde{r}=\int_{\mT^d} \expfunk \derivd\tilde\mu \, ,
\end{displaymath} 
and $\tilde{r}$ satisfies the constraints in the primal problem  \eqref{eq:primal_relax_new}. Consequently,   
\begin{displaymath}
\varphi(q,\gamma)\geq\mathcal{L}(\tilde\Phi,\derivd\tilde\nu, \tilde{r}, q,\gamma)\geq -\mD(P\dm,\derivd\tilde{\mu})
\end{displaymath}
 for all $q\in\PPC$ and $\gamma \geq 0$, which in particular implies that 
\begin{equation}
\label{eq:lowerbound}
\mJ(q)\geq -\mD(P\dm,\derivd\tilde{\mu}) \quad \text{ for all $q\in\PPC$}.
\end{equation}
Now, if there is a $q\in\PPC$ such that $h(q)\leq 0$, then $\mJ(\lambda q)\to -\infty$ as $\lambda\to\infty$, which contradicts \eqref{eq:lowerbound}. Therefore, $h(q)> 0$ for all $q\in\PPC$. Then, since $h$ is continuous, it has  a minimum $\varepsilon$ on the compact set $K:=\{q\in\PPC\setminus\{0\}\mid \|q-e\|_\infty =1\}$. As $e\not\in K$,  $\epsilon >0$. Therefore,
\begin{displaymath}
h(q) \geq  \varepsilon  \|q-e\|_\infty \geq  \varepsilon\|q\|_\infty - \varepsilon\|e\|_\infty \geq \frac{\varepsilon}{|\Lambda |} \|Q\|_\infty  - \varepsilon\|e\|_\infty \, ,
\end{displaymath}
since $\|Q\|_\infty \leq |\Lambda|\|q\|_\infty$ \cite[Lem. A.1]{ringh2015multidimensional}.  Likewise, 
\begin{align*}
\int_{\mathbb{T}^d} \! \! P \log Q \dm &=\int_{\mathbb{T}^d} \! \!  P \log \left[ \frac{Q}{\|Q\|_\infty} \right] \dm\!  +\! \int_{\mathbb{T}^d}\! \!  P  \log \|Q\|_\infty \dm\\ & \leq \int_{\mathbb{T}^d}\! \!  P \log \|Q\|_\infty \dm\, ,
\end{align*}
since $Q/\|Q\|_\infty \leq 1$. Hence
\begin{equation}\label{eq:Jhard_ineq}
\rho \geq \mJ(q) \geq  \frac{\varepsilon}{|\Lambda |} \|Q\|_\infty -\int_{\mathbb{T}^d}\! \!  P \log \|Q\|_\infty \dm  - \varepsilon\|e\|_\infty \, .
\end{equation}
Comparing linear and logarithmic growth  we see that the sublevel set is bounded from above and below.  Moreover, a trivial modification of \cite[Lemma 3.1]{ringh2015multidimensional} shows that $\mJ$ is lower semi-continuous, and hence  $\mJ^{-1}(-\infty,\rho]$ is compact. Consequently, the problem \eqref{eq:moddualproblem} has an optimal solution $\hat{q}$.


Next we show that $\hat{q}$ is unique. For this we return to the original dual problem to find a minimum of \eqref{eq:harddual}. The solution $\hat{q}$ is a minimizer of $\varphi(q,\hat{\gamma})$, where 
\begin{displaymath}
\hat{\gamma}=\frac12\|\hat{q}-e\|_W\, ,
\end{displaymath}
and $\mJ(\hat{q})=\varphi(\hat{q},\hat{\gamma})+c_\nollb$.
To show that $\varphi$ is strictly convex, we form the Hessian
\begin{displaymath}
H=\begin{bmatrix} \int_{\mathbb{T}^d}P/Q^2\dm &0\\0&0 \end{bmatrix}
+\frac{1}{2\gamma^3}\begin{bmatrix} \gamma^2 W& -\gamma (q-e)^*W\\-\gamma W(q-e)&(q-e)^*W(q-e)\end{bmatrix}\
\end{displaymath}
and the quadratic form
\begin{displaymath}
\begin{bmatrix} x\\ \xi\end{bmatrix}^*H\begin{bmatrix} x\\ \xi\end{bmatrix}=
x^* \left(\int_{\mathbb{T}^d}P/Q^2\dm\right) x + \frac{1}{2\gamma^3} [\gamma x - \xi (q-e)]^*W[\gamma x - \xi (q-e)],
\end{displaymath}
which is positive for all nonzero $(x,\xi)$, since $(q-e)\ne 0$ and $\gamma>0$. Consequently, $\varphi$ has a unique minimizer $(\hat{q},\hat{\gamma})$, where $\hat{q}$ is the unique minimizer of $\mJ$.

It follows from \eqref{eq:q2rhard} and \eqref{eq:gammaopt} that 
\begin{equation}
\label{eq:rhathard}
\hat{r}=c + \frac{W(\hat{q}-e)}{\|\hat{q}-e\|_W}\, ,
\end{equation}
which consequently is unique. Moreover, $h(\hat{q})=\langle\hat{r},\hat{q}\rangle -\hat{r}_\nollb$, and hence we can follow the same line of proof as in Theorem~\ref{thm:softcontraints} to show that there is a unique $\hat{c}\in\partial\CP$ such that $\langle\hat{c},\hat{q}\rangle=0$ and a positive discrete measure $\derivd\hat\nu$ with support in $|\Lambda| -1$ points so that \eqref{supp(dnu)hard} and \eqref{rhat2+chat2} hold. Next, let $\mI(\dmu)=-\mD(Pdm,\dmu)$ be the primal functional in \eqref{eq:primal_relax_new}, where $\dmu$ is restricted to the set of positive measures $\dmu:=\Phi\dm + d\nu$  such that $r$, given by \eqref{eq:r}, satisfies the constraint $\|r-c\|_W\leq 1$. 
In view of \eqref{eq:rhat&c}, 
\begin{displaymath}
\mI(\dmu)= \mathcal{L}(\Phi,\derivd\nu, r, \hat{q},\hat{\gamma})\leq \mathcal{L}(\hat{\Phi},\derivd\hat\nu, \hat{r}, \hat{q},\hat{\gamma}) =\mI(\derivd\hat\mu)
\end{displaymath}
for any such $\dmu$, and hence $\dmuhat$ is an optimal solution to the primal problem \eqref{eq:primal_relax_new}. 
Finally, the cases $p\in\SW$, $\SW\cap\CPC\subset \partial\CP$, and $\SW\cap\CPC=\emptyset$ have already been discussed above.
\end{proof}

\begin{corollary}\label{cor:KKThard}
Suppose that $p\in\PPC \setminus\{0\}$ and $\SW\cap\CP\ne\emptyset$. The KKT conditions
\begin{subequations}\label{eq:opt_cond_relax_new}
\begin{align}
& \hat{q} \in \PPC, \quad \hat c \in \partial \CP, \quad \langle \hat{c}, \hat{q} \rangle = 0 \label{eq:opt_cond_relax_slack_new} \\
& \hat{r}_\kb = \int_{\mT^d} \expfunk \frac{P}{\hat Q}\dm + \hat{c}_\kb,\quad \kb\in\Lambda \label{eq:opt_cond_relax_KKT_new} \\
&(\hat{r}-c)\|\hat{q}-e\|_W = W(\hat{q}-e), \quad \hat{r}\in\SW \label{eq:opt_cond_relax_matchError_new}
\end{align}
\end{subequations}
are necessary and sufficient conditions for optimality of the dual pair  \eqref{eq:primal_relax_new} and  \eqref{eq:moddualproblem} of optimization problems. 
\end{corollary}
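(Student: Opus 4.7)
The plan is to obtain both directions essentially from the analysis leading to Theorem~\ref{thm:hardcontraints}.

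For necessity, let $\derivd\hat\mu$ and $\hat q$ be the primal and dual optimizers produced by the theorem. The memberships $\hat q\in\PPC$ and $\hat c\in\partial\CP$ are asserted in the theorem, and the slackness $\langle\hat c,\hat q\rangle=0$ follows from \eqref{supp(dnu)hard} together with $\hat Q\geq 0$, since $\langle\hat c,\hat q\rangle=\int_{\mT^d}\hat Q\,\derivd\hat\nu=0$. The moment relation \eqref{eq:opt_cond_relax_KKT_new} is \eqref{rhat2} after substituting \eqref{chat2}. Finally, \eqref{eq:opt_cond_relax_matchError_new} is a rearrangement of \eqref{eq:rhathard}, and the feasibility part $\hat r\in\SW$ is the active-constraint identity \eqref{eq:rhat&c}.

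For sufficiency, suppose $(\hat q,\hat c,\hat r)$ satisfies \eqref{eq:opt_cond_relax_new}. Since $\hat c\in\partial\CP\subset\CPC$, Proposition~2.4 of \cite{karlsson2015themultidimensional} yields a nonnegative discrete measure $\derivd\hat\nu$ with at most $|\Lambda|-1$ support points whose moments equal $\hat c$. Combining $\langle\hat c,\hat q\rangle=\int_{\mT^d}\hat Q\,\derivd\hat\nu=0$ with $\hat Q\geq 0$ forces $\supp(\derivd\hat\nu)\subseteq\{\hat Q=0\}$. Setting $\derivd\hat\mu:=(P/\hat Q)\dm+\derivd\hat\nu$, condition \eqref{eq:opt_cond_relax_KKT_new} makes $\hat r$ its moment sequence, and $\hat r\in\SW$ makes $\derivd\hat\mu$ primal feasible. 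With the choice $\hat\gamma:=\tfrac12\|\hat q-e\|_W$, relation \eqref{eq:opt_cond_relax_matchError_new} becomes the stationarity condition \eqref{eq:q2rhard}, which together with \eqref{eq:rat} and the support condition on $\derivd\hat\nu$ are the first-order conditions for the Lagrangian \eqref{eq:Lagrangian2}. Exactly as in the last step of the proof of Theorem~\ref{thm:hardcontraints},
\[
\mI(\dmu)=\mathcal{L}(\Phi,\derivd\nu,r,\hat q,\hat\gamma)\leq\mathcal{L}(\hat\Phi,\derivd\hat\nu,\hat r,\hat q,\hat\gamma)=\mI(\derivd\hat\mu)
\]
for every primal-feasible $\dmu$, giving primal optimality of $\derivd\hat\mu$; strict convexity of $\mJ$, already established in the theorem, then identifies $\hat q$ as the unique dual minimizer.

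The only delicate point is the degenerate case $\hat q=e$, for which $\hat\gamma=0$ and identity \eqref{eq:opt_cond_relax_matchError_new} collapses to $0=0$ with no further information; here $\hat c=0$, $\derivd\hat\nu=0$, $\derivd\hat\mu=P\,\dm$, and $\hat r=p$, so primal feasibility $\hat r\in\SW$ reduces to $p\in\SW$. This is precisely the trivial situation treated separately in Theorem~\ref{thm:hardcontraints} in which the inequality constraint is inactive, so the KKT interpretation remains consistent; the standing hypothesis $\SW\cap\CP\neq\emptyset$ of the corollary rules out the only remaining configuration in which no solution exists.
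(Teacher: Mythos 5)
Your proposal is correct and follows essentially the same route as the paper: the paper's own proof of this corollary is a single sentence that defers everything to the analysis in Theorem~\ref{thm:hardcontraints} and its Lagrangian derivation, only remarking that $p\in\SW$ yields the trivial solution $\hat q=e$ with $\dmuhat=P\dm$. You simply spell out the details (reading necessity off \eqref{rhat2+chat2}, \eqref{eq:rhathard}, \eqref{eq:rhat&c}, and reconstructing $\derivd\hat\nu$ via Proposition~2.4 of \cite{karlsson2015themultidimensional} for sufficiency) that the paper leaves implicit, including the correct treatment of the degenerate case.
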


The corollary follows by noting that, if $p\in\SW$, then we obtain the trivial solution $\hat{q}=e$, which corresponds to the primal optimal solution $\dmuhat =P\dm$. 

\begin{proposition}\label{prop:conditionW}
The condition
\begin{equation}
\label{eq:W>cc*}
W > cc^*
\end{equation}
is sufficient for the pair \eqref{eq:primal_relax_new} and \eqref{eq:moddualproblem} of dual problems to have optimal solutions. 
\end{proposition}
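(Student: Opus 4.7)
The plan is to reduce the proposition to the hypothesis $\SW \cap \CP \neq \emptyset$ required in Theorem~\ref{thm:hardcontraints}, which already provides the existence of primal and dual optima. The degenerate subcase $p \in \SW$ is already handled in that theorem (the trivial solution $\derivd\hat\mu = P\dm$), so it suffices to produce a point in $\SW \cap \CP$ under the assumption $W > cc^*$.

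First I would translate $W > cc^*$ into a statement about the location of the origin relative to $\SW$. Since $W$ is Hermitian positive definite, the Schur complement identity applied to the block matrix $\bigl[\begin{smallmatrix} 1 & c^* \\ c & W\end{smallmatrix}\bigr]$ shows $W - cc^* > 0$ is equivalent to $c^* W^{-1} c < 1$, that is, $\|c\|_{W^{-1}} < 1$. Consequently $0$ lies strictly in the interior of the ellipsoid $\SW = \{ r \mid \|r-c\|_{W^{-1}} \le 1\}$.

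Next I would exhibit a concrete element of $\CP$ that can be driven toward the origin. A natural candidate is $r_0 = e$, i.e., the covariance vector of the normalized Lebesgue measure on $\mT^d$. For any $p \in \PPC \setminus \{0\}$ one has $\langle e, p \rangle = p_{\boldsymbol 0} = \int_{\mT^d} P\,\dm > 0$, so $e \in \CP$. Because $\CP$ is a convex cone, $t e \in \CP$ for every $t > 0$. By continuity of $\|\cdot - c\|_{W^{-1}}$, since $\|t e - c\|_{W^{-1}} \to \|c\|_{W^{-1}} < 1$ as $t \to 0^+$, there exists $t^* > 0$ with $t^* e \in \SW$. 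This yields $t^* e \in \SW \cap \CP$, so the intersection is nonempty.

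Combining these two observations, Theorem~\ref{thm:hardcontraints} applies: the modified dual \eqref{eq:moddualproblem} has a unique minimizer $\hat q \in \PPC \setminus \{0\}$, and the associated measure \eqref{eq:optdmu(hard)} solves the primal \eqref{eq:primal_relax_new}; in the degenerate situation $p \in \SW$ the primal optimum is simply $\derivd\hat\mu = P\dm$. The only nontrivial step is the Schur-complement reformulation; the remainder is a short continuity argument exploiting the openness of $\CP$ together with the cone property.
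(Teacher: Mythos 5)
Your proof is correct, but it follows a different route from the paper's official proof --- one that the paper itself sketches in the remark immediately following the proposition, where it is explicitly labeled ``an alternative proof.'' The paper's primary argument never verifies the geometric condition $\SW\cap\CP\ne\emptyset$; instead it observes that $W>cc^*$ gives $(q-e)^*W(q-e)\geq|\langle c,q-e\rangle|^2$ with equality only at $q=e$, hence $h(q)=\langle c,q-e\rangle+\|q-e\|_W>0$ for all $q\in\PPC\setminus\{0\}$ with $q\ne e$, which is precisely the coercivity estimate that drives the sublevel-set compactness argument in the proof of Theorem~\ref{thm:hardcontraints}; it then reruns that proof. Your approach instead reduces the proposition to the stated hypothesis of Theorem~\ref{thm:hardcontraints}: you show $W>cc^*\iff c^*W^{-1}c<1$ (via Schur complements, where the remark uses the matrix inversion lemma --- both standard), so $0\in{\rm int}(\SW)$, and then exhibit $te\in\SW\cap\CP$ for small $t>0$ using $\langle e,p\rangle=\int_{\mT^d}P\,\derivd m>0$ and the cone property of $\CP$; this last step is actually spelled out more carefully than in the paper's remark. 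What the paper's quadratic-form argument buys is self-containment (it directly produces the key inequality used inside the theorem's proof, and handles the dual's existence uniformly without case-splitting on $p\in\SW$); what your argument buys is modularity, since you only invoke the theorem as a black box. The one point you treat slightly loosely is the degenerate case $p\in\SW$: the theorem's statement there only asserts the trivial primal solution $\derivd\hat\mu=P\,\derivd m$, and you should note (as Corollary~\ref{cor:KKThard} does) that the corresponding dual optimum is $\hat q=e$, so that both members of the dual pair indeed have solutions. This is a cosmetic omission, not a gap.
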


\begin{proof}
If $W>cc^*$, then $(q-e)^*W(q-e)\geq \langle c, q-e\rangle ^2$ with equality only for $q=e$. Hence, if $q\ne e$, $\|q-e\|_W > |\langle c, q-e\rangle|$, i.e., $h(q) > 0$ for all $q\in\PPC\setminus\{0\}$ except $q=e$. Then we proceed as in the proof of Theorem~\ref{thm:hardcontraints}.
\end{proof}

\begin{remark}
Condition \eqref{eq:W>cc*} guarantees that $0 \in {\rm int}(\SW)$ and hence in particular that $\SW\cap\CP\ne\emptyset$ as required in Theorem~\ref{thm:hardcontraints}. To see this, note that $0\in\CPC$ and that $r=0$ satisfies the hard constraint in \eqref{eq:primal_relax_new} if $c^*W^{-1}c\leq 0$. However, since $W>cc^*$, there is a $W_0 > 0$ such that $W = W_0 + cc^*$. Then the well-known Matrix Inversion Lemma (see, e.g., \cite[p. 746]{lindquist2015linear}) yields
\[
(W_0 + cc^*)^{-1} = W_0^{-1} - W_0^{-1} c (1 + c^*W_0^{-1}c)^{-1} c^* W_0^{-1},
\]
and therefore
\[
c^*W^{-1}c = c^*W_0^{-1}c - c^*W_0^{-1} c (1 + c^*W_0^{-1}c)^{-1} c^* W_0^{-1}c = \frac{c^*W_0^{-1}c}{1 + c^*W_0^{-1}c} < 1,
\]
which establishes that $0 \in {\rm int}(\SW)$. However, for $\SW\cap\CP$ to be nonempty, $r=0$ need not be contained in this set. Hence, condition \eqref{eq:W>cc*} is not necessary, although it is easily testable. In fact, this provides an alternative proof of Proposition~\ref{prop:conditionW}.
\end{remark}

\section{On the equivalence between the two problems}\label{sec:homeomorphism} 

Clearly $\SW\cap\CP$ is always nonempty if $c\in\CP$. Then both the problem \eqref{eq:primal_relax} with soft constraints and the problem \eqref{eq:primal_relax_new} with hard constraints  have a solution for any choice of W. On the other hand, if $c \not \in \CP$, the problem with soft constraints will always have a solution, while the problem with hard constraints may fail to have one for certain choices of $W$. However, if the weight matrix in the hard-constrained problem -- let us denote it $W_{\rm hard}$ -- is chosen in the set $\mathcal{W}:=\{W > 0 \mid \SW\cap\CP\ne\emptyset, p\not\in\SW\}$, then it can be seen  from Corollaries \ref{cor:KKTsoft} and \ref{cor:KKThard} that we obtain exactly the same solution $\hat{q}$ in the soft-constrained problem by choosing 
\begin{equation}
\label{eq:Whard2Wsoft}
W_{\rm soft}=W_{\rm hard}/\|\hat{q}-e\|_{W_{\rm hard}}. 
\end{equation}
We note that  \eqref{eq:Whard2Wsoft} can be written $W_{\rm hard}=\alpha W_{\rm soft}$, where $\alpha :=\|\hat{q}-e\|_{W_{\rm hard}}$. Therefore, substituting $W_{\rm hard}$ in \eqref{eq:Whard2Wsoft}, we obtain 
 \begin{displaymath}
W_{\rm soft}=\frac{\alpha W_{\rm soft}}{\|\hat{q}-e\|_{\alpha W_{\rm soft}}}=\alpha^{1/2}\frac{W_{\rm soft}}{\|\hat{q}-e\|_{W_{\rm soft}}},
\end{displaymath}
which yields  $\alpha=\|\hat{q}-e\|_{W_{\rm soft}}^2$. Hence the inverse of \eqref{eq:Whard2Wsoft} is given by 
\begin{equation}
\label{eq:Wsoft2Whard}
W_{\rm hard}=W_{\rm soft}\|\hat{q}-e\|_{W_{\rm soft}}^2. 
\end{equation}
By Theorem~\ref{thm:W2qcont} $\hat{q}$ is continuous in $W_{\rm soft}$, and hence, by \eqref{eq:Wsoft2Whard}, the corresponding $W_{\rm hard}$ varies continuously with $W_{\rm soft}$. In fact, this can be strengthened to a homeomorphism between the two weight matrices. 

\begin{theorem}\label{thm:W2Whom}
The map \eqref{eq:Whard2Wsoft} is a homeomorphism between $\mathcal{W}$ and the space of all (Hermitian positive definite) weight matrices, and the inverse is given by \eqref{eq:Wsoft2Whard}.
\end{theorem}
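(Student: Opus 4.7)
The plan is to establish, in order: (i) the map $F(W_{\rm hard}):=W_{\rm hard}/\|\hat{q}-e\|_{W_{\rm hard}}$ and its conjectured inverse $G(W_{\rm soft}):=W_{\rm soft}\|\hat{q}-e\|_{W_{\rm soft}}^2$ are well-defined maps between $\mathcal{W}$ and the open cone $\mathcal{W}_+$ of Hermitian positive definite $|\Lambda|\times|\Lambda|$ matrices; (ii) $G\circ F=\mathrm{id}_{\mathcal W}$ and $F\circ G=\mathrm{id}_{\mathcal W_+}$, so in particular $G(\mathcal W_+)=\mathcal W$; (iii) $G$ is continuous (almost immediate); (iv) $F$ is continuous. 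Step (iv) is the main obstacle, and my intended weapon is Brouwer's invariance of domain.

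For (i), the assumption $p\not\in\SW$ forces the unique dual optimum $\hat q_{\rm hard}$ of Theorem~\ref{thm:hardcontraints} to satisfy $\hat q_{\rm hard}\ne e$: otherwise $\hat Q\equiv 1$, and complementary slackness $\langle\hat c,\hat q\rangle=\hat c_{\boldsymbol 0}=0$ combined with $\hat c\in\partial\CP$ yields $\hat c=0$ and thus $\hat r=p$, contradicting $\hat r\in\SW$. Hence $\|\hat q_{\rm hard}-e\|_{W_{\rm hard}}>0$ and $F(W_{\rm hard})\in\mathcal W_+$. If $c=p$, then $p\in\SW$ for every $W$, so $\mathcal{W}=\emptyset$ and the theorem is vacuous; for $c\ne p$, the soft KKT \eqref{eq:opt_cond_relax_matchError} together with the analogous $\hat c=0$ argument rules out $\hat q_{\rm soft}=e$, so $G$ maps into $\mathcal W_+$.

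For (ii), with $W_{\rm soft}:=F(W_{\rm hard})$ and $\alpha:=\|\hat q_{\rm hard}-e\|_{W_{\rm hard}}$, dividing \eqref{eq:opt_cond_relax_matchError_new} by $\alpha$ gives $\hat r-c=W_{\rm soft}(\hat q_{\rm hard}-e)$, which is precisely the soft KKT \eqref{eq:opt_cond_relax_matchError}; by uniqueness in Theorem~\ref{thm:softcontraints}, $\hat q_{\rm soft}(W_{\rm soft})=\hat q_{\rm hard}(W_{\rm hard})$, and the direct computation $\|\hat q-e\|_{W_{\rm soft}}^2 = \alpha^{-1}\|\hat q-e\|_{W_{\rm hard}}^2 = \alpha$ then yields $G(F(W_{\rm hard}))=W_{\rm hard}$. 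A symmetric calculation establishes $F(G(W_{\rm soft}))=W_{\rm soft}$. To verify $G(W_{\rm soft})\in\mathcal W$, note that $\hat r_{\rm soft}\in\CP$ satisfies $\|\hat r_{\rm soft}-c\|_{G(W_{\rm soft})^{-1}}=1$, so $\SW\cap\CP\ne\emptyset$; and $p\not\in\SW$ under this weight, because otherwise Theorem~\ref{thm:hardcontraints} would give the unique optimum $\hat q_{\rm hard}=e$, contradicting the nontriviality $\hat q_{\rm soft}\ne e$ obtained in (i).

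Finally, (iii) follows immediately: Theorem~\ref{thm:W2qcont} gives continuity of $W_{\rm soft}\mapsto\hat q_{\rm soft}$, hence of the scalar $\|\hat q_{\rm soft}-e\|_{W_{\rm soft}}^2$, hence of $G$. For (iv), having shown $G:\mathcal W_+\to\mathcal W_+$ is a continuous injection between open subsets of the finite-dimensional real vector space of Hermitian matrices, Brouwer's invariance of domain shows $G$ is an open map and a homeomorphism onto its image; by (ii) that image equals $\mathcal W$, so $\mathcal W$ is open and $F=G^{-1}$ is continuous. A self-contained alternative more in the spirit of the rest of the paper would be to mimic the proof of Theorem~\ref{thm:W2qcont}, using the compactness of sublevel sets established via \eqref{eq:Jhard_ineq} and lower semi-continuity of the modified dual \eqref{eq:moddual} to prove continuity of $W_{\rm hard}\mapsto\hat q_{\rm hard}$ directly, whence $F$ is continuous by composition.
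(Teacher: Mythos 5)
Your proof is correct, and it reaches the conclusion by a genuinely different topological route than the paper. The shared core is the KKT bookkeeping: both arguments rest on the observation that dividing \eqref{eq:opt_cond_relax_matchError_new} by $\alpha=\|\hat q-e\|_{W_{\rm hard}}$ turns the hard KKT system into the soft one for $W_{\rm soft}=W_{\rm hard}/\alpha$ (and conversely), so the two optimizers coincide and \eqref{eq:Whard2Wsoft}, \eqref{eq:Wsoft2Whard} are mutually inverse; the paper does this informally in the discussion preceding the theorem, while you make it explicit and also verify the nondegeneracy $\hat q\ne e$ and the membership $G(W_{\rm soft})\in\mathcal W$, which the paper leaves implicit. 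Where you diverge is the final step. The paper invokes \cite[Lemma 2.3]{byrnes2007interior} (continuous $+$ injective $+$ proper $\Rightarrow$ homeomorphism) and therefore must prove properness of the map \eqref{eq:Wsoft2Whard}; this is the expensive part, requiring Lemma~\ref{lem:qhatbounded} to exclude degeneration to a singular matrix and the $\kappa$--triangle-inequality argument to bound $\|\hat q-e\|_W$ away from zero so that $\|W_{\rm soft}\|_F$ cannot blow up. You instead get surjectivity onto $\mathcal W$ for free from the mutual-inverse identities and then apply invariance of domain to the continuous injection $G$ on the open cone of positive definite Hermitian matrices, obtaining openness of $\mathcal W$ and continuity of $F=G^{-1}$ without any boundedness estimates. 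Your route is leaner for this theorem, at the cost of not producing the properness estimates that the paper reuses in the proof of Theorem~\ref{thm:W2Whom2}. Two small remarks: when quoting uniqueness you should note that the remaining soft KKT conditions \eqref{eq:opt_cond_relax_slack}--\eqref{eq:opt_cond_relax_KKT} are literally identical to \eqref{eq:opt_cond_relax_slack_new}--\eqref{eq:opt_cond_relax_KKT_new}, so the whole system transfers, not just the matching-error equation; and in the degenerate case $c=p$ the statement is not vacuous but false as literally written (a homeomorphism from $\emptyset$ onto a nonempty space cannot exist) --- this is a defect of the theorem statement that the paper also silently assumes away, so it does not count against you.
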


\begin{proof}
 By \cite[Lemma 2.3]{byrnes2007interior}, a continuous map  between two spaces of the same dimension is a homeomorphism if and only if it is injective and proper, i.e., the preimage  of any compact set is compact. To see that $\mathcal{W}$ is open, we observe that $\SW$ is continuous in $W$ and that $\CP$ is an open set. As noted above, the map \eqref{eq:Wsoft2Whard} -- let us call it $f$ -- is continuous  and  also injective, as it can be inverted. Hence it only remains to show that $f$ is proper. To this end, we take a compact set $K\subset\mathcal{W}$ and  show that $f^{-1}(K)$ is also compact. There are two ways this could fail. First, the preimage could contain a singular semidefinite matrix. However this is impossible by \eqref{eq:Wsoft2Whard}, since $\|\hat{q}\|_\infty$ is bounded for $W_{\text{hard}}\in K$ (Lemma~\ref{lem:qhatbounded}) and  a nonzero scaling of a singular matrix cannot be nonsingular. Secondly, $\|W_{\rm soft}\|_F$ could tend to infinity. However, this is also impossible. To see this, we first show that there is a $\kappa >0$ such that $\|p-r\|_{W_{\rm hard}^{-1}}\geq \kappa$ for all $r\in\mathfrak{S}_{W_{\rm hard}}$ and all $W_{\rm hard}\in K$. To this end, we observe that the minimum of $\|p-r\|_{W^{-1}}$ over all $W\in K$ and $r$ satisfying the constraint $\|r-c\|_{W^{-1}}\leq 1$ is bounded by 
 \begin{displaymath}
\kappa:= \min_{W\in K} \|p-c\|_{W^{-1}} -1
\end{displaymath}
 by the triangle inequality $\|p-r\|_{W^{-1}}\geq\|p-c\|_{W^{-1}}-\|c-r\|_{W^{-1}}\geq\|p-c\|_{W^{-1}}-1$.
 The minimum is attained, since $K$ is compact, and positive, since  $p\not\in\bigcup_{W\in K}\mathfrak{S}_{W}$.
Now, from Corollary~\ref{cor:KKThard} we see that $\hat{q}=e$ if and only if $\hat{r}=p$. The map from $\hat{q}\mapsto\hat{r}$ is continuous in $q=e$. In fact, $\hat Q$ is uniformly positive in a neighborhood of $e$ and hence the corresponding $\hat{c}=0$. 
Due to this continuity, if $\hat{q}\to e$, then $\hat{r}\to p$, which cannot happen since $\|p-r\|_{W^{-1}}\geq \kappa$ for all $W\in K$. Thus, since $\|\hat{q}-e\|_{W}$ is bounded away from zero, the preimage $f^{-1}(K)$ of $K$ is bounded. Finally, consider a convergent sequence $(W_k)$  in $f^{-1}(K)$ converging to a limit $W_\infty$. Since the sequence is bounded and cannot converge to a singular matrix, we must have $W_\infty >0$, i.e.,  $W_\infty\in f^{-1}(\mathcal{W})$. By continuity, $f(W_k)$ tends to the limit $f(W_\infty)$, which must belong to $K$ since it is compact.  Hence the preimage $W_\infty$ must belong to $f^{-1}(K)$.  Therefore, $f^{-1}(K)$ is compact as claimed. 
\end{proof}

It is illustrative to consider the simple case when $W=\lambda I$. Then the two maps \eqref{eq:Whard2Wsoft} and \eqref{eq:Wsoft2Whard} become
\begin{equation}
\label{eq:Lambda2lambda}
\begin{split}
\lambda_{\rm soft}&=\frac{\sqrt{\lambda_{\rm hard}}}{\|\hat{q}-e\|_2}\\
\lambda_{\rm hard}&=\lambda_{\rm soft}^2\|\hat{q}-e\|_2^2
\end{split}
\end{equation}
Whereas the range of $\lambda_{\rm soft}$ is the semi-infinite interval $(0,\infty)$, for the homeomorphism to hold  $\lambda_{\rm hard}$ is confined to 
\begin{displaymath}
\lambda_{\rm min} <\lambda< \lambda_{\rm max},
\end{displaymath}
where $\lambda_{\rm min}$ is the distance from $c$ to the cone $\CPC$ and $\lambda_{\rm max}=\|c-p\|$. When $\lambda_{\rm soft}\to\infty$, $\lambda_{\rm hard}\to\lambda_{\rm max}$ and $\hat{q}\to e$. If $\lambda_{\rm hard}\geq\lambda_{\rm max}$, then the coresponding problem has the trivial unique solution $\hat{q}= e$, corresponding to the primal solution $\dmuhat=P\dm$.

%


Note that Theorem~\ref{thm:W2Whom} implies that some continuity results in one of the problems can be automatically transferred to the other problem. In particular, we have the following result.

\begin{theorem}\label{thm:W2qcont_hard}  
Let 
\begin{equation}
\label{eq:Jhard}
\mJ_W(q)=\langle c, q \rangle - \int_{\mT^d} P \log Q\, \dm+ \|q - e\|_{W}.
\end{equation}
Then the map $W\mapsto \hat{q}:=\argmin_{q\in\PPC} \mJ_W(q)$ is continuous.
\end{theorem}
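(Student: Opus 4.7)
The plan is to reduce the continuity claim to Theorem~\ref{thm:W2qcont} for the soft-constrained problem, exploiting the homeomorphism established in Theorem~\ref{thm:W2Whom}.

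First I would observe that the KKT systems \eqref{eq:opt_cond_relax} and \eqref{eq:opt_cond_relax_new} coincide except in their last equation. Rewriting \eqref{eq:opt_cond_relax_matchError_new} as
\[
\hat{r} - c \;=\; \frac{W}{\|\hat{q}-e\|_{W}}(\hat{q}-e),
\]
one sees that the unique hard-constrained minimizer $\hat{q}$ associated with $W\in\mathcal{W}$ also satisfies the soft-constrained KKT conditions at the modified weight $\tilde{W} := W/\|\hat{q}-e\|_{W}$, which is precisely \eqref{eq:Whard2Wsoft}. By the uniqueness clauses in Theorems~\ref{thm:softcontraints} and \ref{thm:hardcontraints}, this $\hat{q}$ is therefore also the unique minimizer of $\mJ_{c,p,\tilde{W}}$ in the sense of \eqref{eq:Jsoft}. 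Hence the map $W\mapsto\hat{q}$ factors as $W \mapsto \tilde{W} \mapsto \hat{q}$, where the first arrow is the map \eqref{eq:Whard2Wsoft}.

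Second, I would invoke Theorem~\ref{thm:W2Whom}, which states that the first arrow is a homeomorphism from $\mathcal{W}$ onto the set of all Hermitian positive definite weight matrices, and in particular is continuous. The second arrow is continuous by Theorem~\ref{thm:W2qcont}, applied along the slice of the parameter set $\mathcal{P}$ on which $c$ and $p$ are held fixed. Continuity of the composition then yields the claim on $\mathcal{W}$.

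There is no substantive obstacle: all the heavy lifting has already been carried out in the proof of Theorem~\ref{thm:W2Whom} (properness of $f$ and ruling out degeneration of the preimage). The only thing to check is the matching of KKT conditions, which is immediate from the form of \eqref{eq:Whard2Wsoft}.
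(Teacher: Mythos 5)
Your proposal is correct and follows essentially the same route as the paper: the paper's proof is precisely the observation that $W\mapsto\hat{q}$ is the composition of the homeomorphism of Theorem~\ref{thm:W2Whom} with the continuous map of Theorem~\ref{thm:W2qcont}. Your additional verification that the two KKT systems \eqref{eq:opt_cond_relax} and \eqref{eq:opt_cond_relax_new} match under the substitution \eqref{eq:Whard2Wsoft} is exactly the identification the paper makes at the start of Section~\ref{sec:homeomorphism}, so nothing is missing.
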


\begin{proof}
The theorem follows by noting that $W\mapsto \hat{q}:=\argmin_{q\in\PPC} \mJ_W(q)$ can be seen as a composition of two continuous maps, namely the one in Theorem~\ref{thm:W2qcont}  and the one in Theorem~\ref{thm:W2Whom}.
\end{proof}

Next we shall vary also $c$ and $p$, and to this end we introduce a more explicit notation for $\SW$ and $\mathcal{W}$, namely $\ScW=\SW$ in \eqref{eq:SW} and 
\[
\mathcal{W}_{c,p}:=\{W>0\mid \ScW\cap\CP\ne\emptyset, p\not\in\ScW\}.
\]
Then the corresponding set of parameters \eqref{eq:Pcal} for the problem with hard constraints is given by
\begin{equation}
\label{eq:Pcalhard}
\mathcal{P}_{\rm hard}=\{(\PARa)\mid c\in \fC, p\in \PPC\setminus\{0\}, W\in \mathcal{W}_{c,p}\},
\end{equation}
the interior of which is
\begin{equation*}
{\rm int}(\mathcal{P}_{\rm hard})=\{(\PARa)\mid c\in \fC, p\in \PP, W\in \mathcal{W}_{c,p}\}.
\end{equation*}
Theorem~\ref{thm:W2Whom} can now be modified accordingly to yield the following theorem, the proof of which is deferred to the appendix.
\begin{theorem}\label{thm:W2Whom2}
Let the map $(c,p,W_{\rm hard})\mapsto W_{\rm soft}$ be given by \eqref{eq:Whard2Wsoft} and the map $(c,p,W_{\rm soft})\mapsto W_{\rm hard}$ by \eqref{eq:Wsoft2Whard}. Then the map that sends $(c,p,W_{\rm hard})\in {\rm int}(\mathcal{P}_{\rm hard})$ to $(c,p,W_{\rm soft})\in {\rm int}(\mathcal{P})$ is a homeomorphism.
\end{theorem}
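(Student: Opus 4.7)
My plan is to produce an explicit continuous candidate for the inverse map and then invoke Brouwer's invariance of domain. To this end, define $g:{\rm int}(\mathcal{P})\to \fC\times\fC\times\{W>0\}$ by
\[
g(c,p,W_{\rm soft})=\bigl(c,\;p,\;W_{\rm soft}\|\hat q(c,p,W_{\rm soft})-e\|_{W_{\rm soft}}^2\bigr),
\]
where $\hat q(c,p,W_{\rm soft})$ is the unique minimizer of $\mJ_{c,p,W_{\rm soft}}$ from \eqref{eq:Jsoft} guaranteed by Theorem~\ref{thm:softcontraints}. By Theorem~\ref{thm:W2qcont}, $\hat q$ depends continuously on $(c,p,W_{\rm soft})\in{\rm int}(\mathcal{P})$, so $g$ is continuous.

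Next I argue that $g$ is a bijection onto ${\rm int}(\mathcal{P}_{\rm hard})$. Since $g$ preserves $(c,p)$, it suffices to check the claim fiber-wise in $(c,p)\in\fC\times\PP$. The restriction of $g$ to any such fiber is precisely the map \eqref{eq:Wsoft2Whard}, which by Theorem~\ref{thm:W2Whom} is a homeomorphism from $\{W>0\}$ onto $\mathcal{W}_{c,p}$. Fiber-wise bijectivity together with the fact that $g$ fixes the first two coordinates yields that $g$ is a bijection between ${\rm int}(\mathcal{P})$ and ${\rm int}(\mathcal{P}_{\rm hard})$, and moreover ensures that the image of $g$ lies in ${\rm int}(\mathcal{P}_{\rm hard})$.

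Both ${\rm int}(\mathcal{P})$ and ${\rm int}(\mathcal{P}_{\rm hard})$ are open subsets of the same finite-dimensional real vector space $\fC\times\fC\times\{W=W^*\}$. The former is open because $\PP$ is open and positive-definiteness is an open condition. For the latter, I must additionally verify that the defining conditions $\ScW\cap\CP\ne\emptyset$ and $p\not\in\ScW$ are preserved under small joint perturbations of $(c,p,W)$; this follows because $\CP$ is open, the ellipsoid $\ScW$ varies continuously with $(c,W)$ in the Hausdorff metric, and the strict inequality $\|p-c\|_{W^{-1}}^2>1$ is stable under perturbation. Applying Brouwer's invariance of domain to the continuous injection $g$ between open subsets of the same Euclidean space shows that $g$ is an open map, hence a homeomorphism onto its image. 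Combined with the preceding bijection, $g$ is a homeomorphism between ${\rm int}(\mathcal{P})$ and ${\rm int}(\mathcal{P}_{\rm hard})$; its inverse is precisely the map described in the theorem.

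The main obstacle I anticipate is verifying that ${\rm int}(\mathcal{P}_{\rm hard})$ is open in the ambient space, in particular the condition $\ScW\cap\CP\ne\emptyset$, which involves an implicit existential quantifier: one picks a witness $r_0\in\ScW\cap\CP$, slides it into $\CP\cap{\rm int}(\ScW)$ using convexity of $\ScW$ and openness of $\CP$, and verifies that this witness persists under small perturbations. An alternative, avoiding invariance of domain and paralleling the proof of Theorem~\ref{thm:W2Whom}, is to show directly that $g$ is proper and apply \cite[Lemma~2.3]{byrnes2007interior}. That route would require extending the bounded-sequence analysis in the proof of Theorem~\ref{thm:W2Whom} to sequences along which $c$ and $p$ also vary in a compact set, the key additional ingredient being a uniform bound on $\|\hat q\|_\infty$ across compact subsets of ${\rm int}(\mathcal{P})$, obtained by a routine adaptation of Lemma~\ref{lem:qhatbounded}.
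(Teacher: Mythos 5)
Your proof is correct, but it takes a genuinely different route from the paper's. The paper works with the forward map $(c,p,W_{\rm hard})\mapsto(c,p,W_{\rm soft})$ and verifies the hypotheses of \cite[Lemma~2.3]{byrnes2007interior} directly: continuity, injectivity, and properness. The properness step is where the work lies -- it requires the auxiliary Lemma~\ref{lem:qhatbounded} (a uniform bound on $\|\hat q\|_\infty$ over compact subsets of $\mathcal{P}_{\rm hard}$, obtained by redoing the coercivity estimate \eqref{eq:Jhard_ineq} uniformly over a parameter ball) together with a uniform lower bound $\kappa=\min_{(c,p,W_{\rm hard})\in K}\|p-c\|_{W_{\rm hard}^{-1}}-1>0$ to keep $\hat q$ away from $e$ and hence $W_{\rm soft}$ away from both singularity and infinity. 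You instead exhibit the inverse map explicitly, obtain its continuity for free from Theorem~\ref{thm:W2qcont}, obtain global bijectivity for free from the fiber-wise statement of Theorem~\ref{thm:W2Whom} (since the map fixes $(c,p)$), and close the argument with invariance of domain; no properness estimate is needed, and your verification that ${\rm int}(\mathcal{P}_{\rm hard})$ is open is in fact redundant, since openness of the image comes out of invariance of domain automatically. What your approach buys is economy: it reuses the two theorems already proved and avoids Lemma~\ref{lem:qhatbounded} entirely. What the paper's approach buys is uniformity with the proof of Theorem~\ref{thm:W2Whom} (same cited lemma, same style of estimate) and, as a by-product, the quantitative boundedness result Lemma~\ref{lem:qhatbounded}, which is of independent use. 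Both routes inherit the same implicit nondegeneracy assumptions (e.g.\ $p\not\in\ScW$, so that $\hat q\neq e$) from Theorem~\ref{thm:W2Whom}, so you are not losing generality by invoking it as a black box.
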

Note that this theorem is not a strict amplification of Theorem~\ref{thm:W2Whom} as we have given up the possibility for $p$ to be on the boundary $\partial \PP$. The same is true for the following modification of Theorem~\ref{thm:W2qcont_hard}.
\begin{theorem}\label{thm:par2qcont_hard} 
Let $\mJ_{\PARa}(q)$ be as in \eqref{eq:Jhard}.
Then the map $(\PARa)\mapsto \hat{q}:=\argmin_{q\in\PPC} \mJ_{\PARa}(q)$ is continuous on  ${\rm int}(\mathcal{P}_{\rm hard})$.
\end{theorem}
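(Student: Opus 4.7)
The plan is to realize the map $(c,p,W_{\rm hard})\mapsto \hat q$ as the composition of two already-established continuous maps, exactly as was done in Theorem~\ref{thm:W2qcont_hard}, but now allowing $c$ and $p$ to vary as well.

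First, I would invoke Theorem~\ref{thm:W2Whom2}, which provides a homeomorphism sending $(c,p,W_{\rm hard})\in{\rm int}(\mathcal{P}_{\rm hard})$ to $(c,p,W_{\rm soft})\in{\rm int}(\mathcal{P})$ via \eqref{eq:Whard2Wsoft}. In particular this map is continuous. Next, I would apply Theorem~\ref{thm:W2qcont}, which asserts continuity of the map $(c,p,W_{\rm soft})\mapsto \hat q^{\rm soft}:=\argmin_{q\in\PPC}\mJ^{\rm soft}_{c,p,W_{\rm soft}}(q)$ on all of $\mathcal{P}$, in particular on ${\rm int}(\mathcal{P})$. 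Composition of two continuous maps is continuous, so the composite $(c,p,W_{\rm hard})\mapsto \hat q^{\rm soft}(c,p,W_{\rm soft}(c,p,W_{\rm hard}))$ is continuous on ${\rm int}(\mathcal{P}_{\rm hard})$.

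The only non-routine point is to verify that this composite really coincides with the optimum $\hat q$ of the hard-constrained dual on the right-hand side of \eqref{eq:Jhard}. This is exactly the content of the discussion preceding Theorem~\ref{thm:W2Whom}: comparing the KKT systems of Corollary~\ref{cor:KKTsoft} and Corollary~\ref{cor:KKThard}, the triple $(\hat q,\hat r,\hat c)$ solves the hard-constrained KKT conditions at $W_{\rm hard}$ if and only if it solves the soft-constrained KKT conditions at $W_{\rm soft}$ given by \eqref{eq:Whard2Wsoft}. Both systems have unique primal-dual solutions by Theorems~\ref{thm:softcontraints} and \ref{thm:hardcontraints} (the latter applicable since $W_{\rm hard}\in\mathcal{W}_{c,p}$ guarantees $\ScW\cap\CP\ne\emptyset$ and $p\not\in\ScW$), so the two optima must agree.

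I do not anticipate any serious obstacle: the hard work has already been absorbed into Theorems~\ref{thm:W2qcont} and \ref{thm:W2Whom2}. The one thing that must be checked carefully is that we are indeed working in the interior ${\rm int}(\mathcal{P}_{\rm hard})$, which is needed so that $p\in\PP$ (ensuring $\hat Q$ is bounded away from zero whenever the integral term in $\mJ$ is finite) and also so that Theorem~\ref{thm:W2Whom2} applies; both restrictions are built into the hypothesis of the statement to be proved.
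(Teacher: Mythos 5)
Your proposal is correct and follows the paper's own proof essentially verbatim: the paper likewise obtains the result as the composition of the continuous map $(c,p,W_{\rm hard})\mapsto(c,p,W_{\rm soft})$ from Theorem~\ref{thm:W2Whom2} with the continuous map $(c,p,W_{\rm soft})\mapsto\hat q$ from Theorem~\ref{thm:W2qcont}. Your additional verification that the two optimizers coincide via the KKT systems is exactly the identification the paper makes in the discussion preceding Theorem~\ref{thm:W2Whom}, so nothing is missing.
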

\begin{proof}
The theorem follows immediately by noting that $(\PARa_\text{hard})\mapsto \hat{q}$ can be seen as a composition of two continuous maps, namely $(\PARa_\text{hard})\mapsto(\PARa_\text{soft})$ of Theorem~\ref{thm:W2Whom2} and $(\PARa_\text{soft})\mapsto\hat{q}$ of Theorem~\ref{thm:W2qcont}.
%
%
%
\end{proof}
Theorem~\ref{thm:par2qcont_hard}  is a counterpart of Theorem~\ref{thm:W2qcont} for the problem with hard constraints, except that $p$ is restricted to the interior $\PP$. It should be possible to extend the result to hold for all $p\in\PPC\setminus \{0\}$ via a direct proof along the lines of the proof of Theorem~\ref{thm:W2qcont}.

%


\section{Estimating covariances from data}\label{sec:covest}

For a scalar stationary stochastic process $\{y(t);\,t\in\mZ\}$, it is well-known that the biased covariance estimate 
\[
c_k = \frac{1}{N} \sum_{t = 0}^{N - k -1}  y_t\bar{y}_{t+k} ,
\]
based on an observation record $\{ y_t \}_{t = 0}^{N-1}$, yields a positive definite Toeplitz matrix, which is equivalent to $c \in \CP$ \cite[pp. 13-14]{ahiezer1962some} In fact, these estimates correspond to the ones obtained from the periodogram estimate of the spectrum (see, e.g., \cite[Sec. 2.2]{stoica1997introduction}). On the other hand, the Toeplitz matrix of the unbiased estimate 
\[
c_k = \frac{1}{N-k} \sum_{t = 0}^{N -k-1}  y_t \bar{y}_{t+k}
\]
is in general not positive definite.

The same holds in higher dimensions ($d>1$) where the observation record is $\{y_\tb\}_{\tb\in \mZ^d_N}$ with $$\mZ^d_N=\{(\ell_1,\ldots, \ell_d)\,|\, 0\le \ell_j\le N_j-1, j=1,\ldots, d\}.$$ 
The unbiased estimate is then given by
\begin{equation}\label{eq:unbiased_est}
c_\kb = \frac{1}{\prod_{j=1}^d (N_j-|k_j|)} \sum_{\tb \in \mZ^d_{N}} y_\tb \bar{y}_{\tb+\kb},
\end{equation}
and the biased estimate by
\begin{equation}\label{eq:biased_est}
c_\kb = \frac{1}{\prod_{j=1}^d N_j} \sum_{\tb \in \mZ^d_{N}} y_\tb \bar{y}_{\tb+\kb},
\end{equation}
where we define $y_\tb=0$ for $\tb\notin \mZ^d_{N}$. The sequence of unbiased covariance estimates does not in general belong to $\CP$, but  the biased covariance estimates yields $c\in\CP$ also in the multidimensional setting. In fact, this can be seen by noting that the biased estimate corresponds to the Fourier coefficients of the periodogram \cite[Sec. 6.5.1]{dudgeon1984multidimensional}, i.e.,  if the estimates $c_\kb$ are given by \eqref{eq:biased_est}, then 
\begin{equation}\label{eq:periodogram}
\Phi_{\rm periodogram}(\thetab) := \frac{1}{\prod_{j=1}^d N_j} \Big| \sum_{\tb \in \mZ^d_N} y_\tb \expfunt \Big|^2=\sum_{\kb \in \mZ^d_N-\mZ^d_N} c_{\kb} \expfunkm ,
\end{equation}
where $\mZ^d_N-\mZ^d_N$ denotes the Minkowski set difference. This leads to the following lemma.

\begin{lemma}\label{lem:biased}
Given the observed data $\{ y_{\tb} \}_{\tb \in \mZ^d_N}$, let $\{ c_\kb \}_{\kb \in \Lambda}$ be given by \eqref{eq:biased_est}.  Then $c \in \CP$.
\end{lemma}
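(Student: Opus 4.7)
The plan is to use identity \eqref{eq:periodogram} to realize $c$ as the moment sequence of the (absolutely continuous, nonnegative) periodogram measure, and then verify the defining inequality for $\CP$ by a direct integral computation.

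First, provided that the observation record is sufficiently large that $\Lambda\subseteq \mZ^d_N-\mZ^d_N$ (an implicit assumption in the setup, since otherwise the biased estimates for indices outside this Minkowski difference vanish trivially), identity \eqref{eq:periodogram} is precisely the Fourier expansion of the nonnegative trigonometric polynomial $\Phi_{\rm periodogram}$. Inverting it yields
\begin{equation*}
c_\kb \;=\; \int_{\mT^d} e^{i(\kb,\thetab)}\,\Phi_{\rm periodogram}(\thetab)\,\dm(\thetab), \qquad \kb\in\Lambda ,
\end{equation*}
so that $c$ is the truncated moment sequence of the absolutely continuous nonnegative measure $\Phi_{\rm periodogram}\dm$.

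Next, I would pick an arbitrary $p\in\PPC\setminus\{0\}$ and compute $\langle c,p\rangle$ by exchanging sum and integral. Using $\overline{P(e^{i\thetab})}=P(e^{i\thetab})$ (since $P$ is real by $p_{-\kb}=\bar p_\kb$) together with the change of index $\kb\mapsto -\kb$, this gives
\begin{equation*}
\langle c, p \rangle \;=\; \sum_{\kb\in\Lambda} c_\kb\,\bar p_\kb \;=\; \int_{\mT^d} \Phi_{\rm periodogram}(\thetab)\,P(e^{i\thetab})\,\dm(\thetab).
\end{equation*}
The integrand is a product of two nonnegative functions. Since $\Phi_{\rm periodogram}$ is the modulus squared of the trigonometric polynomial $\sum_{\tb}y_\tb\,e^{i(\tb,\thetab)}$, it vanishes only on a set of Lebesgue measure zero whenever the data is not identically zero, and similarly $P$ vanishes only on a set of measure zero. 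Hence the integrand is strictly positive on a set of positive measure, so $\langle c,p\rangle>0$. Since $p\in\PPC\setminus\{0\}$ was arbitrary, $c\in\CP$ by definition.

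The only delicate points are bookkeeping ones: one should explicitly flag that the degenerate case $y\equiv 0$ is excluded (it yields $c=0\notin\CP$), and confirm that $\Lambda\subseteq\mZ^d_N-\mZ^d_N$ so that \eqref{eq:periodogram} indeed recovers every $c_\kb$ appearing in $\langle c,p\rangle$. Neither is a genuine mathematical obstacle, but they are what one must verify to close the argument cleanly.
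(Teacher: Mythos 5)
Your proof is correct and follows essentially the same route as the paper: both express $\langle c,p\rangle$ via \eqref{innerprod} and \eqref{eq:periodogram} as the integral of the periodogram against $P$ and conclude positivity because each factor vanishes only on a null set. The caveats you flag (excluding $y\equiv 0$ and checking $\Lambda\subseteq\mZ^d_N-\mZ^d_N$) are legitimate bookkeeping points that the paper's terser proof leaves implicit.
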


\begin{proof}
Given $\{ y_{\tb} \}_{\tb \in \mZ^d_N}$, let $c = \{ c_\kb \}_{\kb \in \mZ^d_N}$, where $c_\kb$ be given by \eqref{eq:biased_est}. 
In view of  \eqref{innerprod} and \eqref{eq:periodogram} we have
\[
\langle c, p \rangle = \int_{\mT^d} \frac{1}{\prod_{j=1}^d N_j} \Big| \sum_{\tb \in \mZ^d_N} y_\tb \expfunt \Big|^2 P(e^{i\thetab}) \dm(\thetab),
\]
which is positive for all $p \in \PPC \setminus \{0\}$. Consequently $c \in \CP$. 
\end{proof}

An advantage of the approximate procedures to the rational covariance extension problem is that they can also be used for cases where the biased estimate is not available, e.g., where the covariance is estimated from snapshots.

\section{Application to spectral estimation}\label{sec:example}

As long as we use the biased estimate \eqref{eq:biased_est}, we may apply exact covariance matching as outlined in Section~\ref{sec:prel}, whereas in general approximate covariance matching will be required for biased covariance estimates. However, as will be seen in the following example, approximate covariance matching may sometimes be better even if $c\in\CP$.

In this application it is easy to determine a bound on the acceptable error in the covariance matching, so we use the procedure with hard constraints. Given data generated from a two-dimensional stochastic system, we test three different procedures, namely (i) using the biased estimate and exact matching, (ii) using the biased estimate and the approximate matching \eqref{eq:primal_relax_new}, and (iii)  using the unbiased estimate and the approximate matching \eqref{eq:primal_relax_new}. The procedures are then evaluated by checking the size of the error  between the matched covariances and the true ones from the dynamical system.

\subsection{An example} 

Let $y_{(t_1, t_2)}$ be the steady-state output of a two-dimensional recursive filter driven by a white noise input $u_{(t_1, t_2)}$. Let the transfer function of the recursive filter be
\[
\frac{b(e^{i\theta_1}, e^{i\theta_2})}{a(e^{i\theta_1}, e^{i\theta_2})} = \frac{ \sum_{\kb \in \Lambda_+} b_{\kb} e^{-i(\kb,\thetab)}}{\sum_{\kb \in \Lambda_+} a_{\kb} e^{-i(\kb,\thetab)}},
\]
where $\Lambda_+=\{(k_1,k_2)\in \mZ^2\mid 0\le k_1\le 2, 0\le k_2\le 2 \}$ and the coefficients are given by
$b_{(k_1,k_2)}=B_{k_1+1, k_2+1} $ and $a_{(k_1,k_2)}=A_{k_1+1, k_2+1}$, where
\begin{align*}
B = \!
{\scriptscriptstyle
\begin{bmatrix}
    \,0.9 &  -0.2\phantom{0} &   0.05 \\
    \,0.2 &  \phantom{-} 0.3\phantom{0} &   0.05 \\
   -0.05 &   -0.05 &   0.1\phantom{0}
\end{bmatrix}}, \;
A = \!
{\scriptscriptstyle
\begin{bmatrix}
    \phantom{-}1\phantom{.0} &   \phantom{-}0.1 &   \phantom{-}0.1 \\
   -0.2 &   \phantom{-}0.2 &  -0.1 \\
    \phantom{-}0.4 &  -0.1 &  -0.2
\end{bmatrix}
}.
\end{align*}
The spectral density $\Phi$ of $y_{(t_1, t_2)}$, which is shown in Fig.~\ref{fig:original} and is similar to the one considered in \cite{ringh2015multidimensional}, is given by
\[
\Phi(e^{i\theta_1}, e^{i\theta_2}) = \frac{P(e^{i\theta_1}, e^{i\theta_2})}{Q(e^{i\theta_1}, e^{i\theta_2})} = \left|\frac{b(e^{i\theta_1}, e^{i\theta_2})}{a(e^{i\theta_1}, e^{i\theta_2})}\right|^2,
\]
and hence the index set $\Lambda$ of the coefficients of the trigonometric polynomials $P$ and $Q$ is given by $\Lambda=\Lambda_+-\Lambda_+=\{(k_1,k_2)\in \mZ^2\,|\; |k_1|\le 2, |k_2|\le 2 \}$. Using this example, we perform two different simulation studies.
\begin{figure}
\centering
\includegraphics[width=0.4\textwidth]{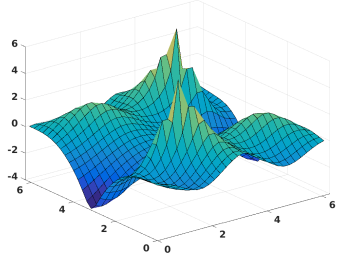}
\caption{Log-plot of the original spectrum.}
\label{fig:original}
\end{figure}

\subsection{First simulation study}
The system was simulated for $500$ time steps along each dimension, starting from $y_{(t_1, t_2)} = u_{(t_1, t_2)} = 0$ whenever either $t_1 < 0$ or $t_2 < 0$. Then covariances were estimated from the $9 \times 9$ last samples, using both the biased and the unbiased estimator. With this covariance data we investigate the  three procedures (i), (ii) and (iii)  described above. In each case, both the maximum entropy (ME) solutions and solutions with the true numerator are computed.\footnote{Maximum entropy: $P\equiv 1$. True numerator: $P=P_{\text{true}}$.} The weighting matrix is taken to be $W = \lambda I$, where $\lambda$ is $\lambda_{\text{biased}} := \| c_{\text{true}} - c_{\text{biased}} \|_2^2$ in procedure (ii) and $\lambda_{\text{unbiased}} := \| c_{\text{true}} - c_{\text{unbiased}} \|_2^2$ in procedure (iii).%
\footnote{Note that this is the smallest $\lambda$ for which the true covariance sequence belongs to the uncertainty set $\{r\,|\, \|r-c\|_2^2\le \lambda\}$.}
The norm of the error%
\footnote{Here we use the norm of the covariance estimation error as measure of fit. However, note that this is not the only way to compare accuracy of the different methods. The reason for this choice is that comparing the  accuracy of the spectral estimates is not straightforward since it depends on the selected metric or distortion measure.}      
 between the matched covariances and the true ones, $\| \hat{r} - c_{\text{true}} \|_2$, is shown in Table.~\ref{tab:mean_std_2}. The means and standard deviations are computed over the $100$ runs.

The biased covariance estimates belong to the cone $\CP$ (Lemma~\ref{lem:biased}), and therefore procedure (i) can be used. The corresponding error in Table~\ref{tab:mean_std_2} is the statistical error in estimating the covariance. This error is quite large because of a  short data record. Using approximate covariance matching in this case seems to give a worse match.
However, approximate matching of the unbiased covariances gives as good a fit as exact matching of the biased ones. 

\begin{table}[bt]
\caption{Norm differences $\| \hat{r} - c_{\rm {true}} \|_2$ for different solutions in the first simulation setup.}
\label{tab:mean_std_2}
\begin{center}
\begin{tabular}{l l l}
\toprule
                         & Mean     & Std. \\
\midrule
Biased, exact matching   & $3.2374$ & $1.7944$ \\
Biased, approximate matching, ME-solution    & $3.7886$ & $1.3274$  \\ 
Biased, approximate matching, using true $P$ & $3.8152$ & $1.6509$  \\
Unbiased, approximate matching, ME-solution    & $3.2575$ & $1.4721$  \\ 
Unbiased, approximate matching, using true $P$ & $3.2811$ & $1.7787$  \\
\bottomrule
\end{tabular}
\end{center}
\end{table}

\subsection{Second simulation study}
In this simulation the setup is the same as the previous one, except that the simulation data has been discarded if the {\em unbiased\/} estimate belongs to $\CPC$. To obtain $100$ such data sets, $414$ simulations of the system were needed. (As a comparison, in the previous experiment $23$ out of the $100$ runs resulted in an unbiased estimate outside $\CPC$.)
Again, the norm of the error between matched covariances and the true ones is shown in Table~\ref{tab:mean_std}, and the means and standard deviations are computed over the $100$ runs. 

As before, the biased covariance estimates belong to the cone $\CP$, and therefore procedure (i) can be used.
Comparing this with the results from procedure (ii)  suggests that there may be an advantage not to enforce exact matching, although we know that the data belongs to the cone.
%
%
Regarding procedure (iii), we know that the unbiased covariance estimates do not belong to the cone $\CPC$, hence we need to use approximate covariance matching. In this example, this procedure turns out to give the smallest estimation error.


\begin{table}[bt]
\caption{
Norm differences $\| \hat{r} - c_{\rm {true}} \|_2$ for different solutions in the second setup, where all unbiased estimate are outside $\CPC$.}
\label{tab:mean_std}
\begin{center}
\begin{tabular}{l l l}
\toprule
                                     & Mean     & Std. \\
\midrule
Biased, exact matching   & $2.9245$ & $2.2528$ \\
Biased, approximate matching, ME-solution    & $1.9087$ & $1.1324$ \\ 
Biased, approximate matching, using true $P$ & $1.8532$ & $1.1904$  \\
Unbiased, approximate matching, ME-solution    & $1.5018$ & $0.6601$ \\ 
Unbiased, approximate matching, using true $P$ & $1.4451$ & $0.7296$  \\
\bottomrule
\end{tabular}
\end{center}
\end{table}

\section{Application to system identification and texture reconstruction}\label{sec:ex_texture}

Next we apply the theory of this paper to texture generation via Wiener system identification.
Wiener systems form a class of nonlinear dynamical systems consisting of a linear dynamic part composed with a static nonlinearity as illustrated in Figure \ref{fig:blockdiagramRep}. This is a subclass of so called block-oriented systems \cite{billings1980identification}, and Wiener system identification is a well-researched area (see, e.g., \cite{greblicki1992nonparametric} and references therein) that is still very active \cite{lindsten2013bayesian, wahlberg2015identification, abdalmoaty2016simulated}. Here, we use  Wiener systems to model and generate textures.

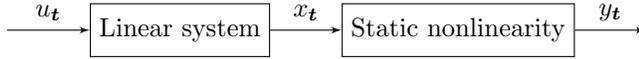
\begin{figure}[t]
\centering
\tikzstyle{int}=[draw, minimum size=2em]
\tikzstyle{init} = [pin edge={to-,thin,black}]

\begin{tikzpicture}[node distance=3.7cm, auto, >=latex']
    \node [int] (a) {\text{Linear system}};
    \node (b) [left of=a, node distance=2.3cm, coordinate] {a};
    \node [int] (c) [right of=a] {\text{Static nonlinearity}};
    \node [coordinate] (end) [right of=c, node distance=2.5cm]{};
    \path[->] (b) edge node {$u_\tb$} (a);
    \path[->] (a) edge node {$x_\tb$} (c);
    \draw[->] (c) edge node {$y_\tb$} (end) ;
\end{tikzpicture}

%

%
%
%
%
\caption{A Wiener system with thresholding as static nonlinearity.}
\label{fig:blockdiagramRep}
\end{figure} 

Using dynamical systems for modeling of images and textures is not new and has been considered in, e.g., \cite{Chiuso-F-P-05, picci2008modelling}. The setup presented here is motivated by \cite{barman2015gaussian}, where thresholded Gaussian random fields are used to model porous materials for design of surface structures in pharmaceutical film coatings. Hence we let the static nonlinearity, call it $f$, be a thresholding with unknown thresholding parameter $\tau$. In our previous work \cite{ringh2017further} we applied exact covariance matching to such a problem. However, in general there is no guarantee that the estimated covariance sequence $c$ belongs to the cone $\CP$. Consequently, here we shall use approximate covariance matching instead.

The Wiener system identification can be separated into two parts. We start by identifying  the nonlinear part. Using the notations of Figure~\ref{fig:blockdiagramRep}, let $\{u_\tb; \, \tb \in \mZ^d\}$ be a zero-mean Gaussian white noise input, and let $\{x_\tb; \, \tb \in \mZ^d\}$ be the stationary output of the linear system, which we assume to be normalized so that $c_{\mathbf 0}:= \ExpOp [x_\tb^2]=1$. Moreover, let $y_\tb=f(x_\tb)$ where $f$ is the static nonlinearity
\begin{equation}\label{eq:staticnonlin}
f(x)=\begin{cases}1 & x>\tau \\0 & \mbox{otherwise}\end{cases}
\end{equation}
with unknown thresholding parameter $\tau$. Since $ \ExpOp [y_\tb] = 1-\phi(\tau)$, where $\phi(\tau)$ is the Gaussian cumulative distribution function, an estimate of $\tau$ is given by $\tau_{\rm est}= \phi^{-1}(1- \ExpOp [y_\tb])$.

Now, let $c_\kb^x :=  \ExpOp [x_{\tb+\kb} x_\tb]$ be the covariances of $x_\tb$, and let $c_\kb^y:=  \ExpOp [y_{\tb+\kb} y_\tb] -  \ExpOp [y_{\tb + \kb}] \ExpOp [y_\tb]$ be the covariances of $y_\tb$. As was explained in \cite{ringh2017further}, by using results from \cite{price1958useful} one can obtain a relation between $c_\kb^y$ and $c_\kb^x$, given by 
\begin{equation}
\label{eq:randcrelation}
\begin{split}
c_\kb^y = \int_0^{c_{\kb}^x} \frac{1}{2\pi \sqrt{1-s^2}}\exp\left(-\frac{\tau^2}{1+s}\right)ds .
\end{split}
\end{equation}
This is an invertible map, which we compute numerically, and given $\tau_{\rm est}$ we can thus get estimates of the covariances $c_\kb^x$ from estimates of the covariances $c_\kb^y$.   However, even if $c^y$ is is a biased estimate so that $c^y \in \CP$, $c^x$ may not be a {\em bona fide} covariance sequence.


\subsection{Identifying the linear system}\label{subsec:linear_syst}

Solving \eqref{eq:primal_relax} or \eqref{eq:primal_relax_new} for a given sequence of covariance estimates $c$, we obtain an estimate of the absolutely continuous part of the power spectrum $\Phi$ of that process. In the case $d=1$, $\Phi =P/Q$ can be factorized as 
\[
\Phi(e^{i\theta}) = \frac{P(e^{i\theta})}{Q(e^{i\theta})} = \frac{|b(e^{i\theta})|^2}{|a(e^{i\theta})|^2},
\]
which provides a transfer function of a corresponding linear system, which fed by a  white noise input  will produce an autoregressive-moving-average (ARMA) process with an output signal with precisely the  power distribution  $\Phi$ in steady state. For $d\geq 2$, a spectral factorization of this kind is not possible in general  \cite{dumitrescu2007positive}, but instead there is always a factorization as a sum-of-several-squares \cite{dritschel2004factorization, geronimo2006factorization},
\[
\Phi(e^{i\thetab}) = \frac{P(e^{i\thetab})}{Q(e^{i\thetab})} = \frac{\sum_{k = 1}^{\ell} |b_k(e^{i\thetab})|^2}{\sum_{k = 1}^m |a_k(e^{i\thetab})|^2},
\]
the interpretation of which in terms of a dynamical system is unclear when $m > 1$. Therefore we resort to a heuristic and apply the factorization procedure in \cite[Theorem 1.1.1]{geronimo2004positive} although some of the conditions 
required to ensure the existence of a spectral factor may not be met.
(See \cite[Section 7]{ringh2015multidimensional} for a more detailed discussion.)

\subsection{Simulation results}
The method, which is summarized in Algorithm~\ref{alg:WienerEst}, is tested on some textures from the Outex database  \cite{ojala2002outex} (available online at \url{http://www.outex.oulu.fi/}). These textures are color images and have thus been converted to binary textures by first converting them to black-and-white and then thresholding them.%
\footnote{The algorithm has been implemented and tested in Matlab, version R2015b. The textures have been normalized to account for light inhomogenities using a reference image available in the database. The conversion from color images to black-and-white images was done with the built-in function \texttt{rgb2gray}, and the threshold level was set to the mean value of the maximum and minimum pixel value in the black-and-white image.}
Three such textures are shown in Figure~\ref{subfig:texture_one} through \ref{subfig:texture_three}.

\renewcommand{\algorithmicrequire}{\textbf{Input:}}
\renewcommand{\algorithmicensure}{\textbf{Output:}}
\algsetup{indent=12pt}

\begin{algorithm}
\caption{}
\label{alg:WienerEst}
\begin{algorithmic}[1]
\REQUIRE $(y_\tb)$
\STATE Estimate threshold parameter: $\tau_{\rm est}= \phi^{-1}(1-E[y_\tb])$
\STATE Estimate covariances: $c_\kb^y := E[y_{\tb+\kb} y_\tb] - E[y_{\tb + \kb}]E[y_\tb]$
\STATE Compute covariances $c_\kb^x := E[x_{\tb+\kb} x_\tb]$  by using \eqref{eq:randcrelation}
\STATE Estimate a rational spectrum using Theorem~\ref{thm:softcontraints} or \ref{thm:hardcontraints}
\STATE Apply the factorization procedure in \cite[Theorem 1.1.1]{geronimo2004positive}
\ENSURE $\tau_{\rm est}$, coefficients for the linear dynamical system
\end{algorithmic}
\end{algorithm}
In this example there is no natural bound on the error, so we use the problem with soft constraints, for which we choose the weight $W = \lambda I$ with $\lambda=0.01$ for all data sets. Moreover, we do maximum-entropy reconstructions, i.e., we set the prior to $P \equiv 1$. The optimization problems are then solved by first discretizing the grid $\mT^2$, in this case in $50 \times 50$ points (cf. \cite[Theorem 2.6]{ringh2015multidimensional}), and solving the corresponding problems using the CVX toolbox \cite{cvx, grant2008graph}. The reconstructions are shown in Figures~\ref{subfig:recon_texture_one}~-~\ref{subfig:recon_texture_three}. Each reconstruction seems to provide a reasonable visual representation of the structure of the corresponding original. This is especially the case for the second texture.

\begin{figure*}%
  \centering
  \subfloat[First texture.\label{subfig:texture_one}]{\includegraphics[width=0.3\textwidth]{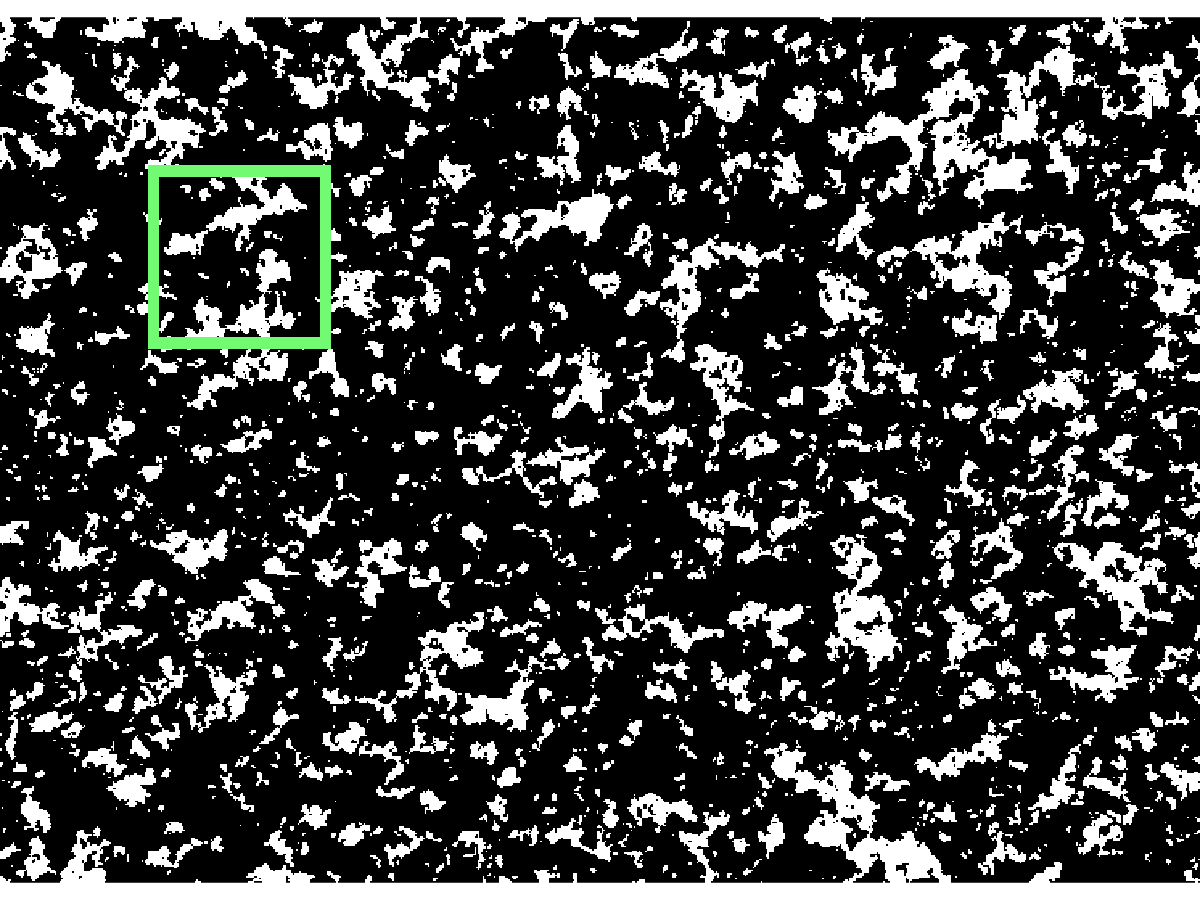}}%
  \hfil
  \subfloat[Second texture.\label{subfig:texture_two}]{\includegraphics[width=0.3\textwidth]{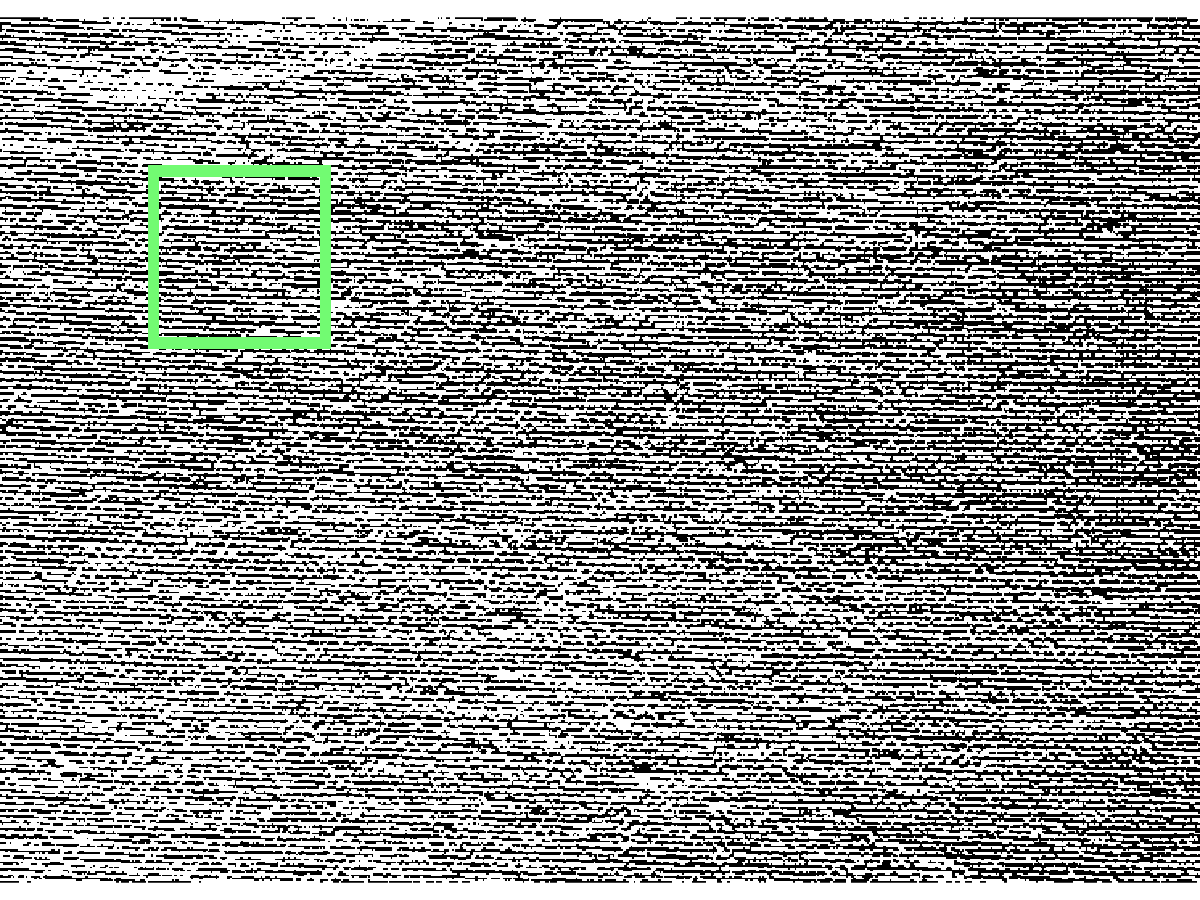}}%
  \hfil
  \subfloat[Third texture.\label{subfig:texture_three}]{\includegraphics[width=0.3\textwidth]{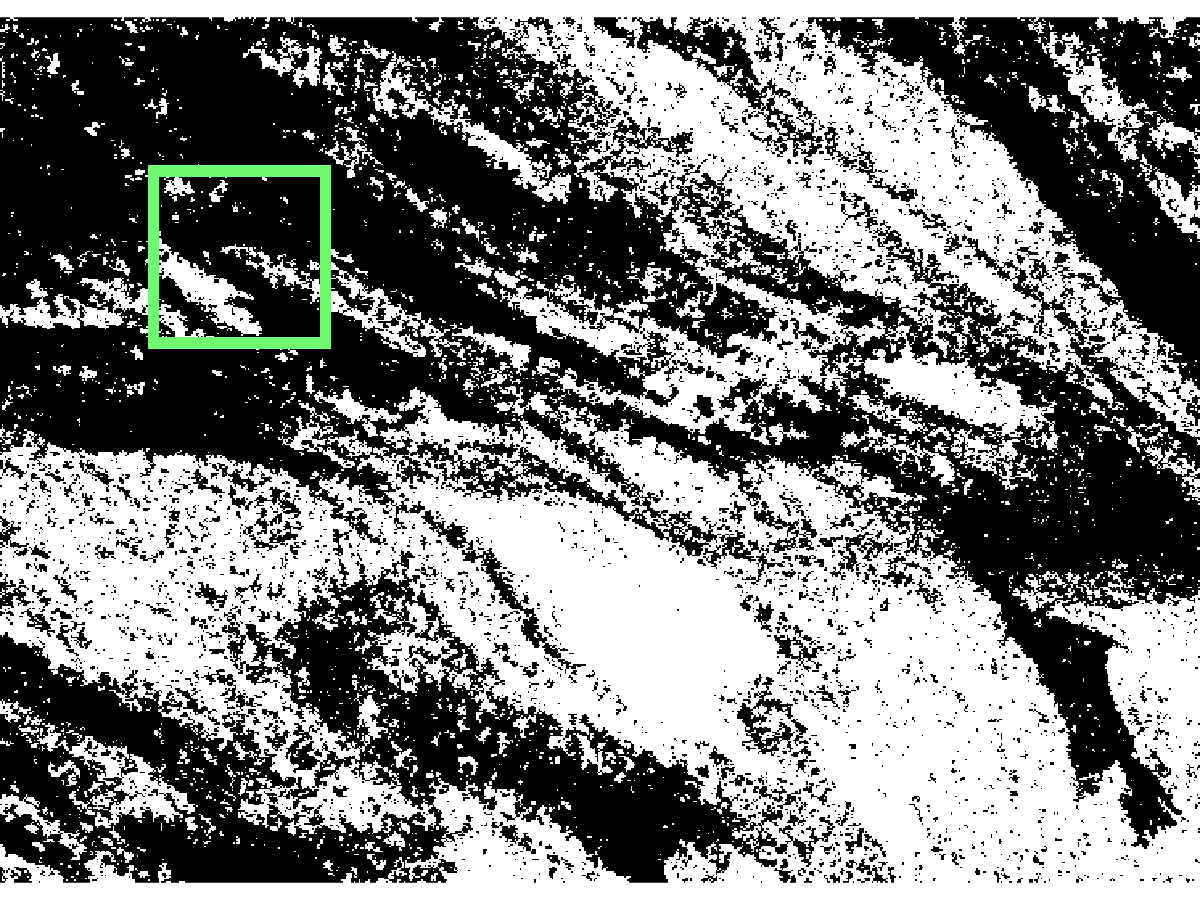}}%
  \hfil 
  \subfloat[Reconstruction of \ref{subfig:texture_one}.\label{subfig:recon_texture_one}]{\includegraphics[width=0.3\textwidth]{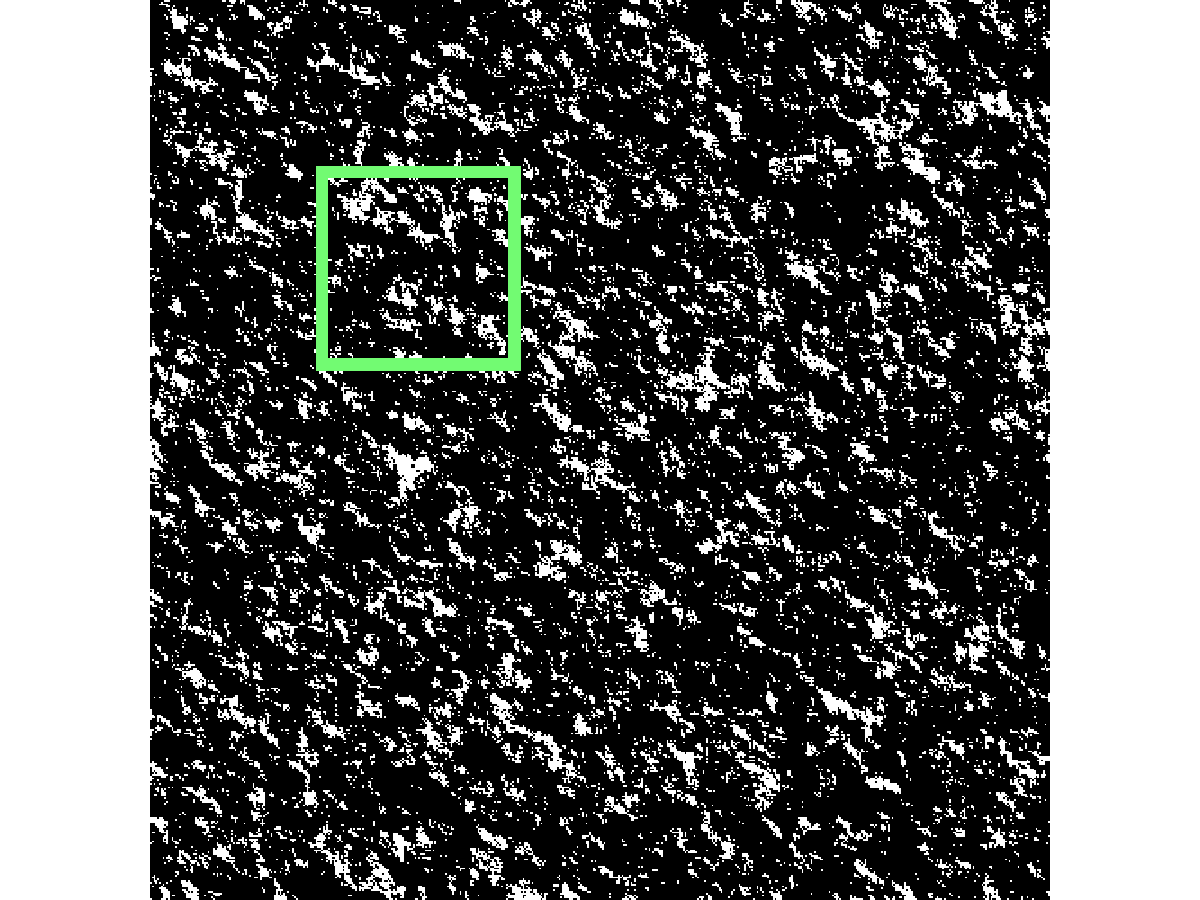}}%
  \hfil 
  \subfloat[Reconstruction of \ref{subfig:texture_two}.\label{subfig:recon_texture_two}]{\includegraphics[width=0.3\textwidth]{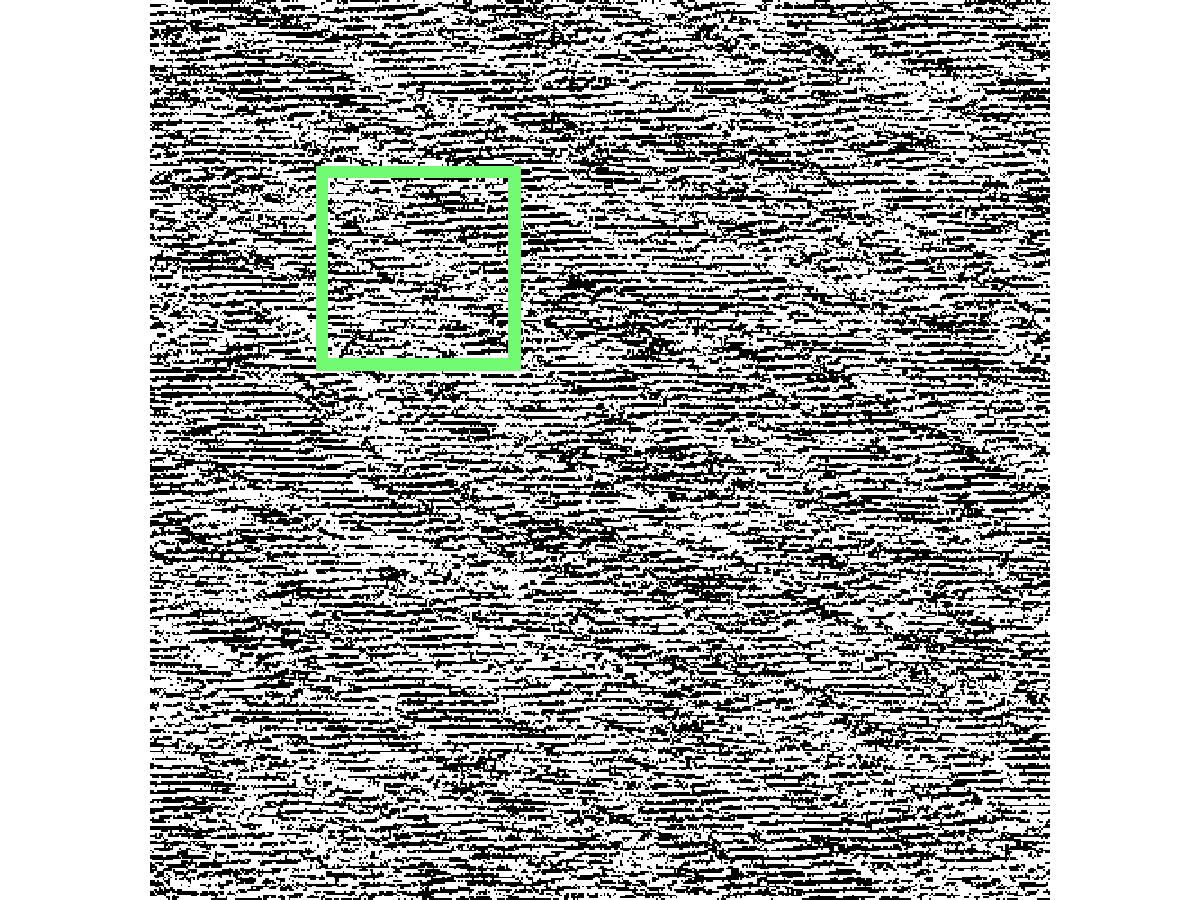}}%
  \hfil
  \subfloat[Reconstruction of \ref{subfig:texture_three}.\label{subfig:recon_texture_three}]{\includegraphics[width=0.3\textwidth]{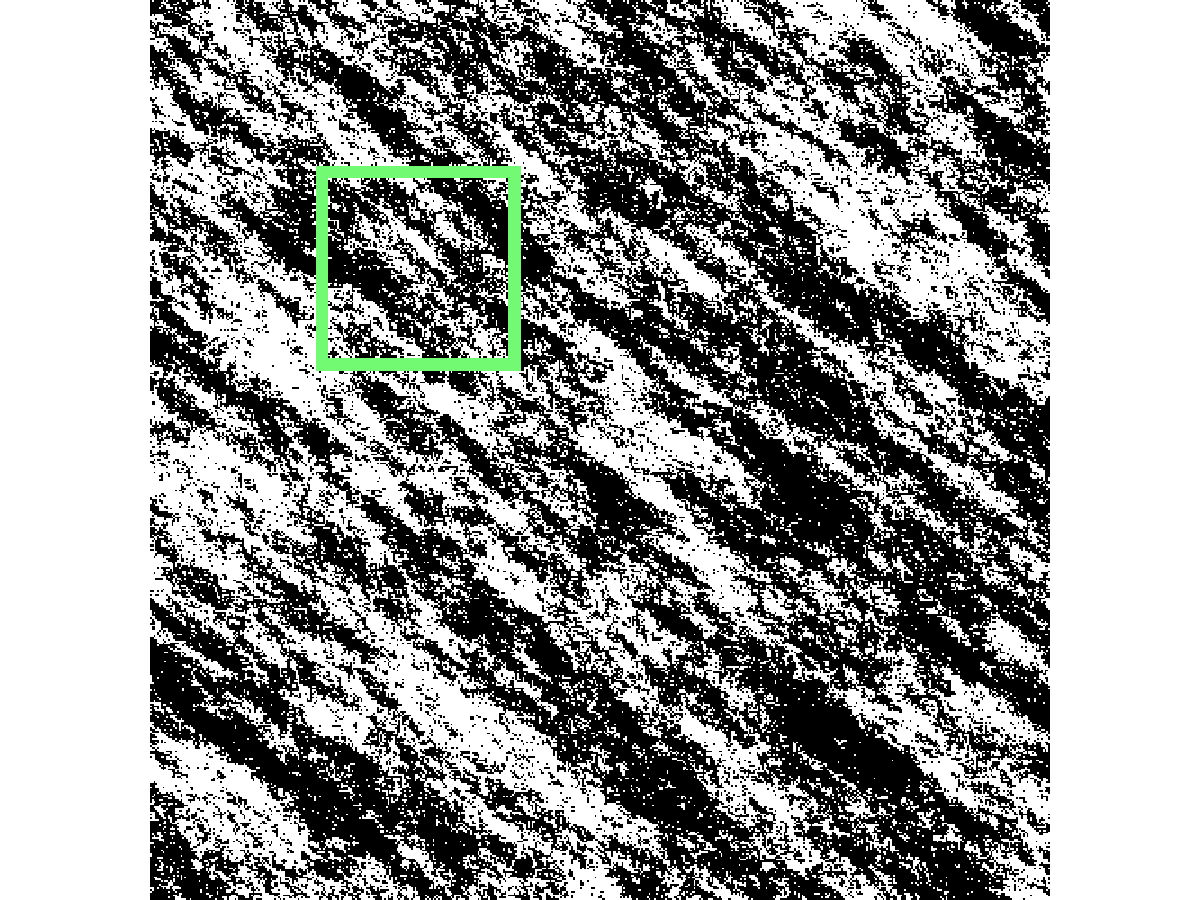}}%
  \hfil
  \subfloat[Close-up of \ref{subfig:texture_one}.\label{subfig:texture_one_closeup}]{\includegraphics[width=0.3\textwidth]{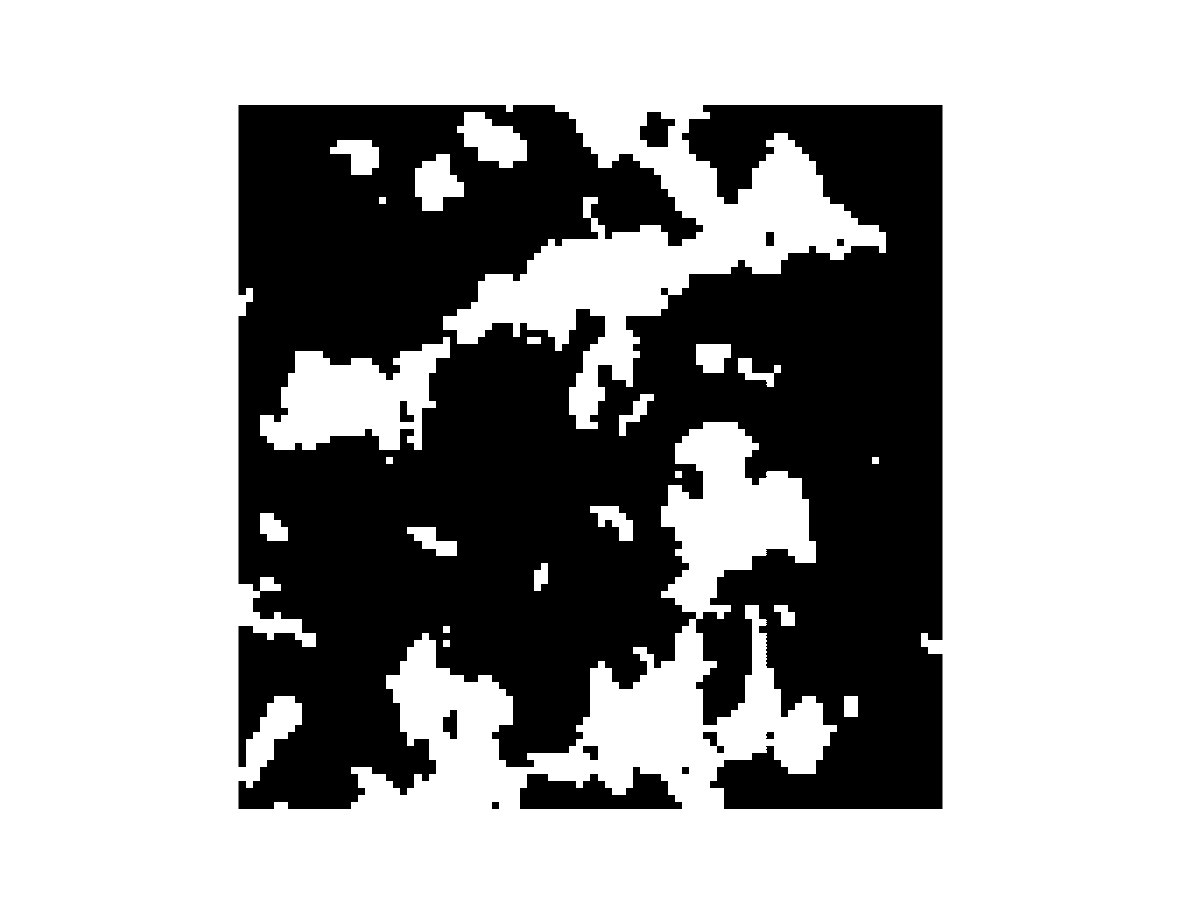}}%
  \hfil
  \subfloat[Close-up of \ref{subfig:texture_two}.\label{subfig:texture_two_closeup}]{\includegraphics[width=0.3\textwidth]{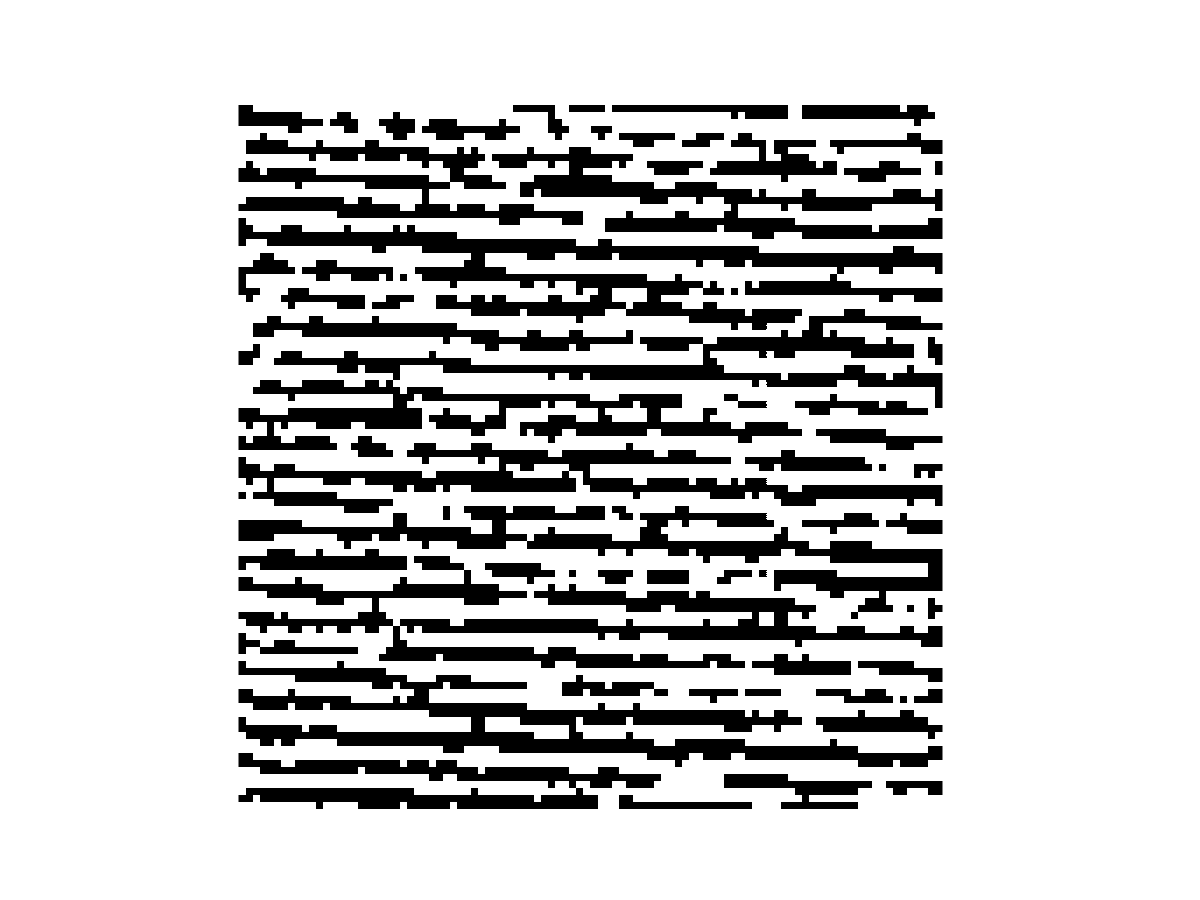}}%
  \hfil
  \subfloat[Close-up of \ref{subfig:texture_three}.\label{subfig:texture_three_closeup}]{\includegraphics[width=0.3\textwidth]{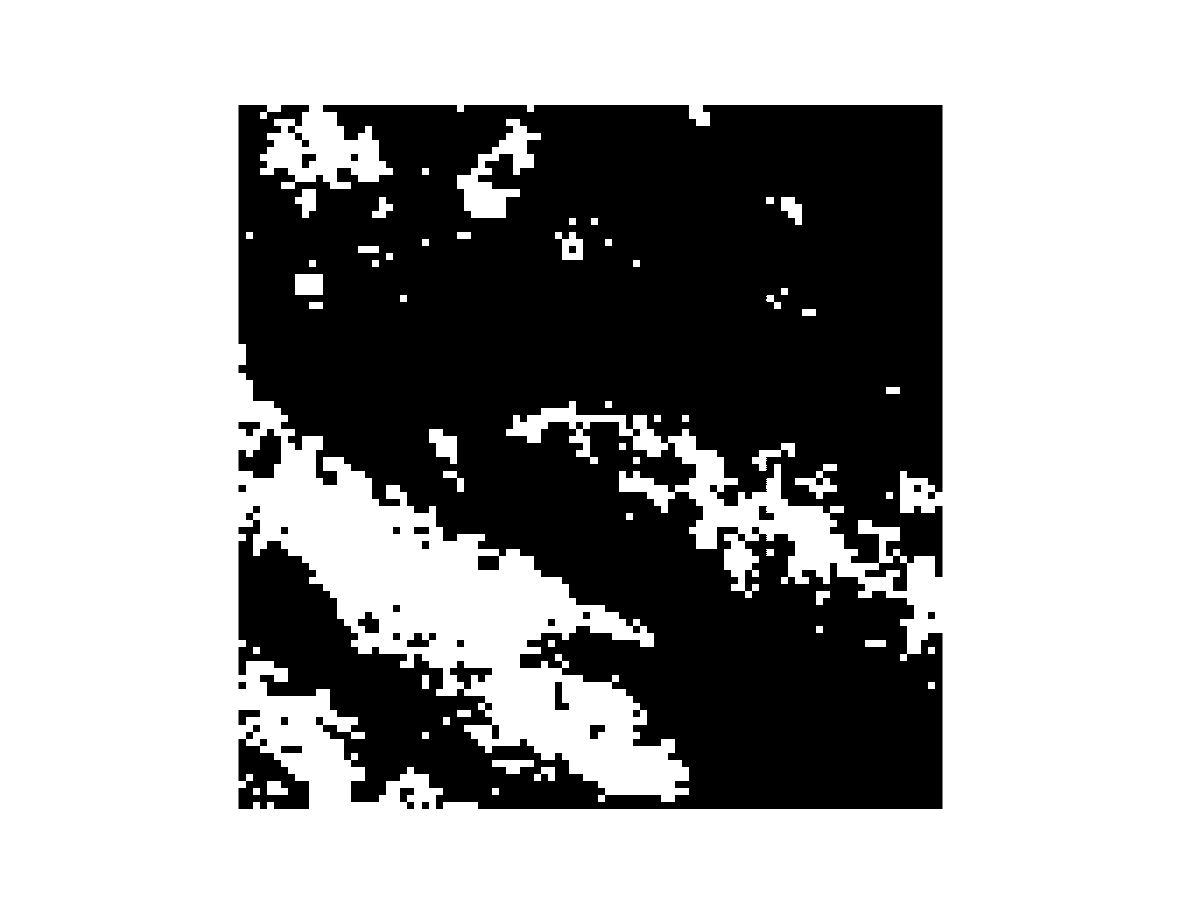}}%
  \hfil 
  \subfloat[Close-up of \ref{subfig:recon_texture_one}.\label{subfig:recon_texture_one_closeup}]{\includegraphics[width=0.3\textwidth]{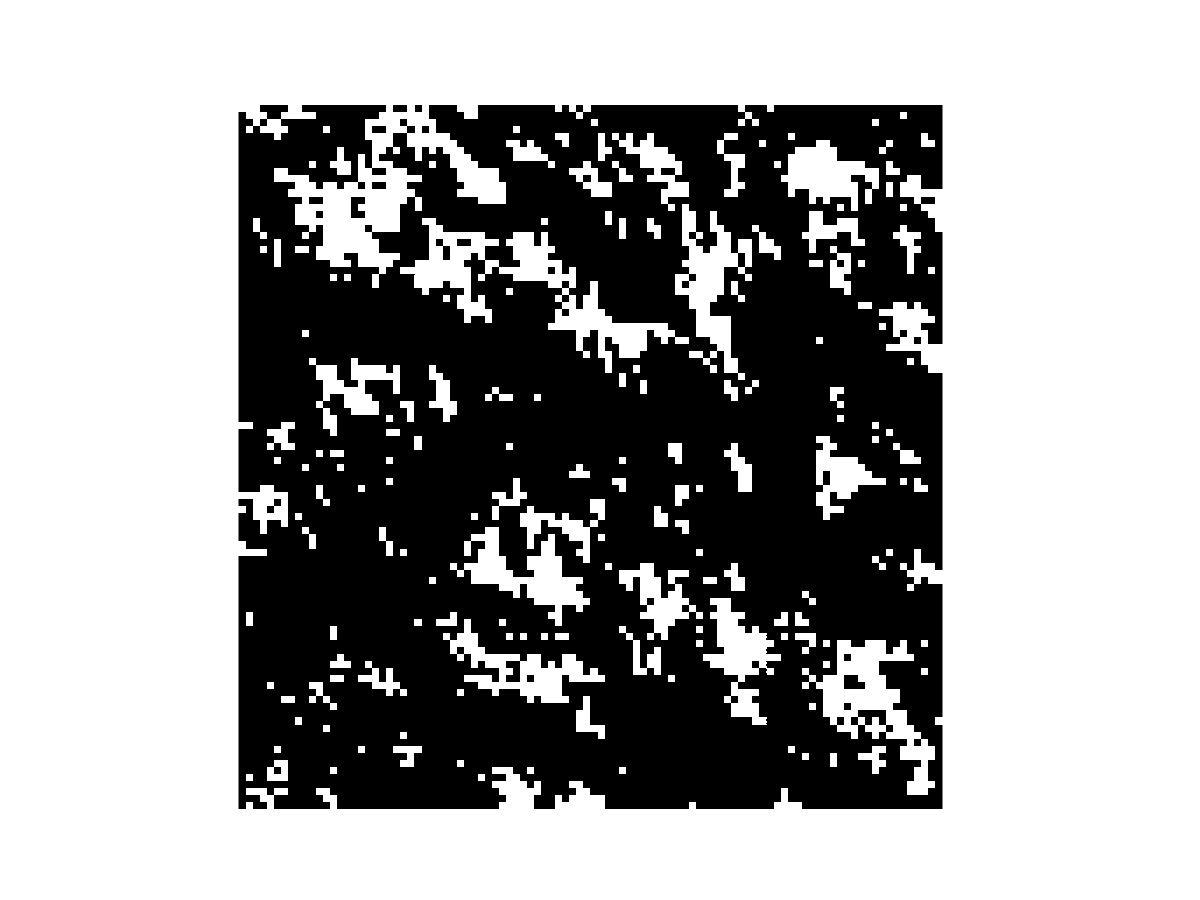}}%
  \hfil 
  \subfloat[Close-up of \ref{subfig:recon_texture_two}.\label{subfig:recon_texture_two_closeup}]{\includegraphics[width=0.3\textwidth]{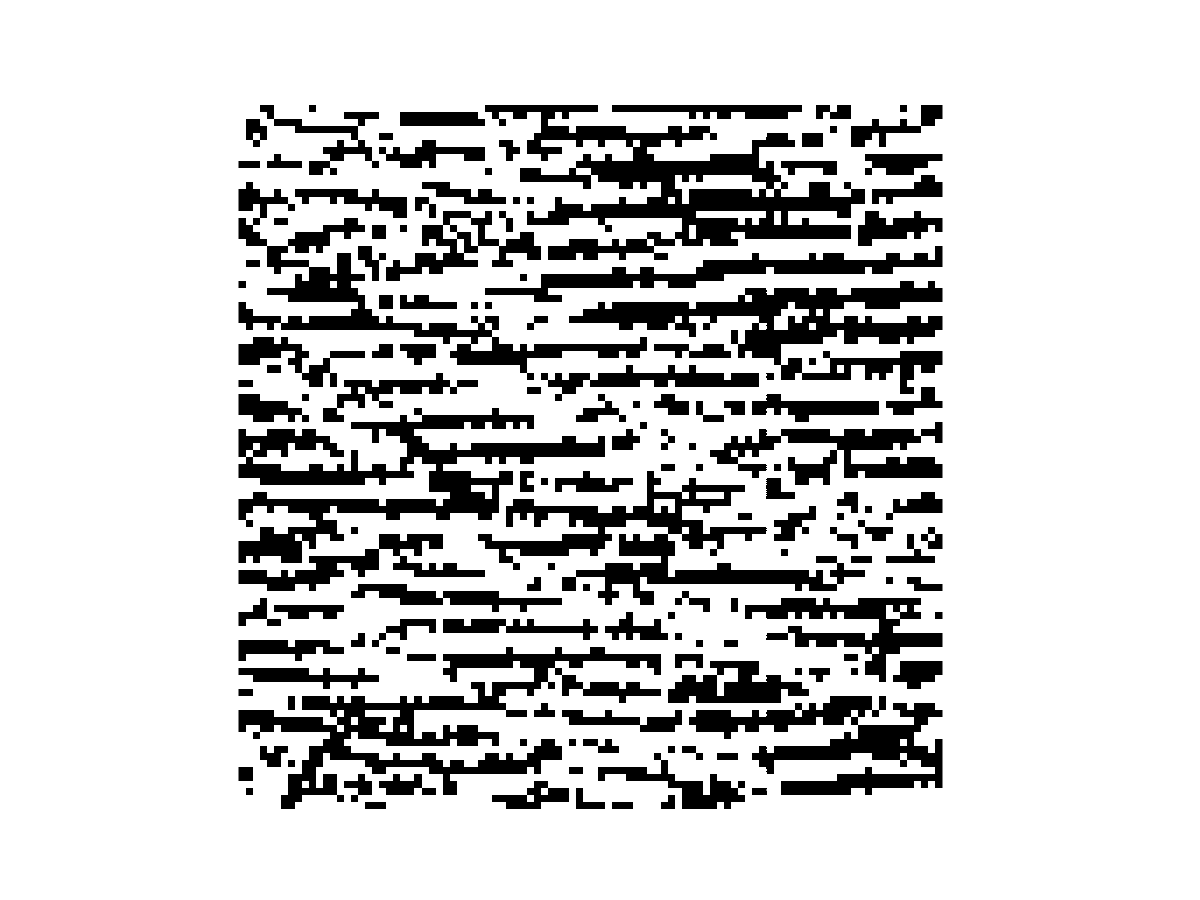}}%
  \hfil
  \subfloat[Close-up of \ref{subfig:recon_texture_three}.\label{subfig:recon_texture_three_closeup}]{\includegraphics[width=0.3\textwidth]{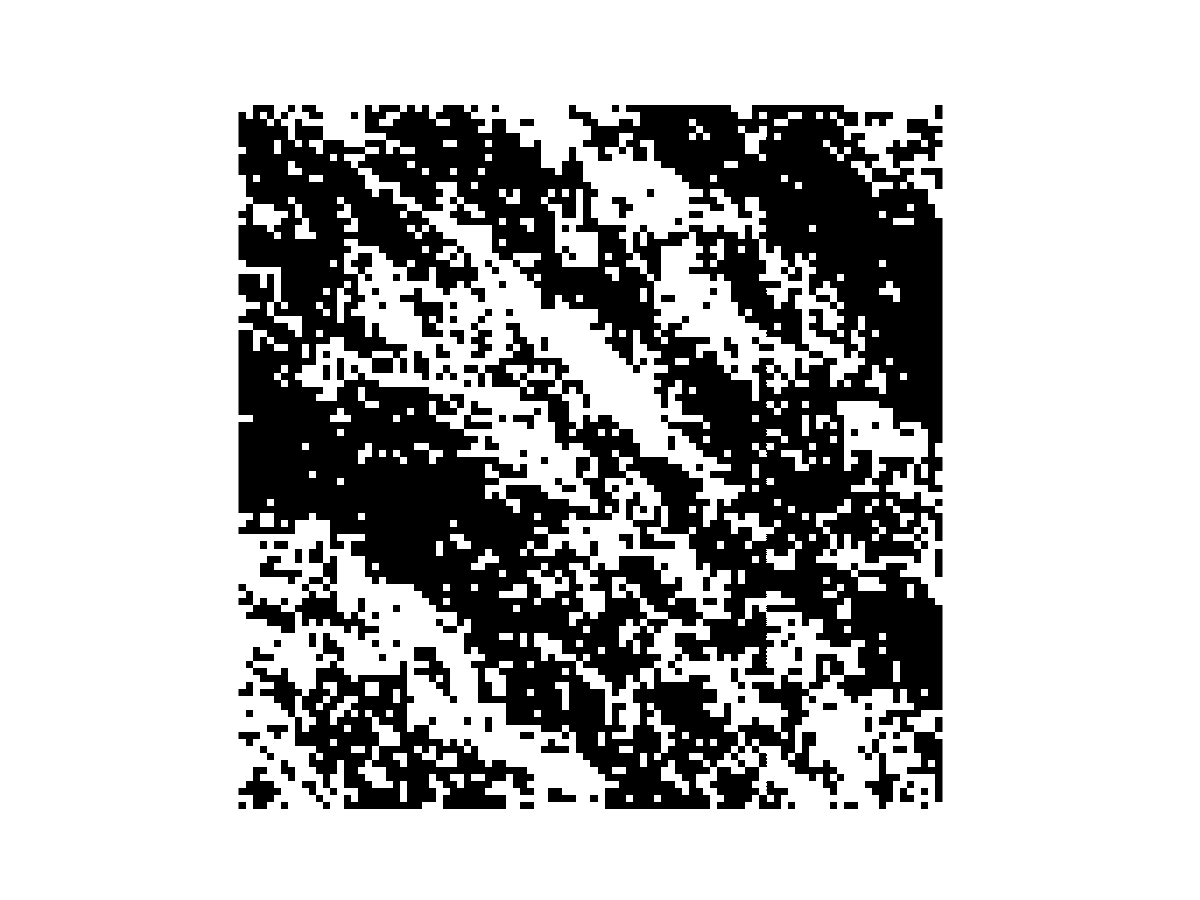}}%
    \caption{In Figures~\ref{subfig:texture_one}~-~\ref{subfig:texture_three} three different binary textures, of size $1200 \times 900$ pixels, are shown. These are obtained from the textures granite001-inca-100dpi-00, paper010-inca-100dpi-00, and plastic008-inca-100dpi-00 in the Outex database, respectively. The textures in Figures~\ref{subfig:texture_one}~-~\ref{subfig:texture_three} are used as input $(y_\tb)$ to Algorithm~\ref{alg:WienerEst} and in Figures~\ref{subfig:recon_texture_one}~-~\ref{subfig:recon_texture_three} the corresponding reconstructed textures of size $500\times 500$ are shown. In Figures~\ref{subfig:texture_one_closeup}~-~\ref{subfig:recon_texture_three_closeup}  close-ups of size $100\times 100$ are shown of the original and reconstructed textures (areas marked in Figures~\ref{subfig:texture_one}~-~\ref{subfig:recon_texture_three}).
  }%
  \label{fig:texture_generation}%
\end{figure*}

\section{Conclusions}
In this work we extend the results of our previous paper \cite{ringh2015multidimensional}  on the multidimensional rational covariance extension problem to allow for approximate covariance matching. We have provided two formulations to this problem, and we have shown that they are connected via a homeomorphism. In both formulations we have used  weighted 2-norms to quantify the missmatch of the estimated covariances. However,  we expect that by suitable modifications of the proofs similar results can be derived for other norms, since all norms have directional derivatives in each point \cite[p. 49]{deimling1985nonlinear}. 

These results  provide a procedure for multidimensional spectral estimation, but in order to obtain a complete theory for multidimensional system identification and realization theory  there are still some open problems, such as spectral factorization and interpretations in terms of multidimensional stochastic systems, as briefly discussed in Section~\ref{subsec:linear_syst}.

\appendix

\section{Deferred proofs}

Let $B_\rho(x^{(0)})$ denote the closed ball $\{ x\in X\mid \|x-x^{(0)}\|_X\leq\rho\}$, where $X$ is either a set of vectors or a set of matrices depending on then context. The  norm $\|\cdot\|_X$ is the Euclidean norm for vectors and Frobenius norm for matrices.

%
%
\begin{lemma}\label{lem:W2Jcont}
Let $\mathcal{P}$ be given  by \eqref{eq:Pcal} and $\mJ_{\PARa}$ by \eqref{eq:Jsoft}. Furthermore, let $\hat{q}:=\min_{q\in\PPC} \mJ_{\PARa}(q)$. Then  the map $(\PARa)\mapsto \mJ_{\PARa}(\hat{q})$ is continuous for  $(\PARa)\in\mathcal{P}$. Moreover, for any compact $K\subset\mathcal{P}$, the corresponding set of optimal solutions $\hat{q}$ is bounded.
\end{lemma}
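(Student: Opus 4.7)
The plan is to use the quadratic penalty $\tfrac12\|q-e\|_W^2$ to secure coercivity of $\mJ_{\PARa}$ without the standing assumption $c\in\CP$ used in \cite[Lem.~3.2]{ringh2015multidimensional}, and then combine a lower and an upper semicontinuity bound on the optimal value. The setup follows the scheme of \cite[Prop.~7.3]{karlsson2015themultidimensional} but with the quadratic term doing the confining work.

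For \textbf{boundedness on compact $K\subset\mathcal{P}$} I would use the coefficient-to-sup bound $\|Q\|_\infty\le|\Lambda|\,\|q\|_\infty$ from \cite[Lem.~A.1]{ringh2015multidimensional} to estimate
\begin{equation*}
\mJ_{\PARa}(q)\;\ge\;-\|c\|_2\|q\|_2\;-\;c_\nollb\log\bigl(|\Lambda|\,\|q\|_\infty\bigr)\;+\;\tfrac12\lambda_{\min}(W)\,\|q-e\|_2^2.
\end{equation*}
Since $\mJ_{\PARa}(e)=c_\nollb<\infty$, the optimizer $\hat q$ lies in the sublevel set at level $c_\nollb$. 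On a compact $K\subset\mathcal{P}$, $\|c\|_2$ and $c_\nollb$ are bounded above and $\lambda_{\min}(W)$ is bounded below by a positive constant, so the quadratic term dominates uniformly and all these sublevel sets are contained in a common ball $B_R(0)$, giving the second claim.

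\textbf{Lower semicontinuity} of $\PARa\mapsto\mJ_{\PARa}(\hat q)$ then proceeds as follows. For $\PARk\to\PARa$ in $\mathcal{P}$, the uniform bound above extracts a subsequence with $\hat q^{(k)}\to q_\infty\in\PPC$. A routine adaptation of \cite[Lem.~3.1]{ringh2015multidimensional} (Fatou applied to $-P\log Q$, now with varying $P$) gives joint lower semicontinuity of $(\PARa,q)\mapsto\mJ_{\PARa}(q)$ on $\mathcal{P}\times\PPC$, while the linear and quadratic parts are jointly continuous. Using optimality of $\hat q$ for $\PARa$, one obtains $\mJ_{\PARa}(\hat q)\le\mJ_{\PARa}(q_\infty)\le\liminf_k\mJ_{\PARk}(\hat q^{(k)})$, and a standard subsequence argument extends this to the full sequence.

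For \textbf{upper semicontinuity} I would approximate $\hat q\in\PPC$ from the interior by $q_\varepsilon:=(1-\varepsilon)\hat q+\varepsilon e\in\PP$ for $\varepsilon\in(0,1)$; then $Q_\varepsilon\ge\varepsilon>0$ uniformly on $\mT^d$, and $(\PARa',q)\mapsto\mJ_{\PARa'}(q)$ is jointly continuous at $(\PARa,q_\varepsilon)$, so $\mJ_{\PARk}(q_\varepsilon)\to\mJ_{\PARa}(q_\varepsilon)$ as $k\to\infty$. Concavity of $\log$ yields $P\log Q_\varepsilon\ge(1-\varepsilon)\,P\log\hat Q$, while $P\log Q_\varepsilon\le P\log(\|\hat Q\|_\infty+1)$ is trivially integrable. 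Since $\mJ_{\PARa}(\hat q)<\infty$ forces $P\log\hat Q\in L^1(\mT^d)$, dominated convergence delivers $\mJ_{\PARa}(q_\varepsilon)\to\mJ_{\PARa}(\hat q)$ as $\varepsilon\to 0$. Chaining,
\begin{equation*}
\limsup_{k\to\infty}\mJ_{\PARk}(\hat q^{(k)})\;\le\;\limsup_{k\to\infty}\mJ_{\PARk}(q_\varepsilon)\;=\;\mJ_{\PARa}(q_\varepsilon)\;\xrightarrow{\varepsilon\to 0}\;\mJ_{\PARa}(\hat q),
\end{equation*}
which combined with the liminf bound gives continuity. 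The main obstacle is the possible vanishing of $\hat Q$ on $\partial\PP$, where the log-integral cannot be handled by naive continuity of $\mJ$; the interior convex combination $q_\varepsilon$ together with dominated convergence, justified by $\mJ_{\PARa}(\hat q)<\infty\Rightarrow P\log\hat Q\in L^1(\mT^d)$, is the key device that circumvents it.
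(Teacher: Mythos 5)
Your proof is correct, and while your boundedness argument coincides with the paper's (both confine the minimizer to the sublevel set at level $c_\nollb=\mJ_{(c,p,W)}(e)$ and let the quadratic term $\tfrac12\|q-e\|_W^2$ beat the linear and logarithmic growth uniformly over a compact parameter set), your continuity argument takes a genuinely different route. The paper proves continuity of the optimal value by a symmetric sandwich: for two nearby parameter points it combines the cross-optimality inequalities $\mJ_{\PARc}(\hat q_1)\le\mJ_{\PARc}(\hat q_2+\varepsilon q_0)$ and $\mJ_{\PARd}(\hat q_2)\le\mJ_{\PARd}(\hat q_1+\varepsilon q_0)$ with an additive interior perturbation $\varepsilon q_0$, $q_0\in\PP$, and controls the delicate term $\int_{\mT^d}P\log\bigl(1+\varepsilon Q_0/\hat Q_1\bigr)\dm\le\varepsilon\int_{\mT^d}PQ_0/\hat Q_1\dm$ by identifying $\int_{\mT^d}PQ_0/\hat Q_1\dm=\langle\hat r_1-\hat c_1,q_0\rangle$ via the KKT conditions of Corollary~\ref{cor:KKTsoft}. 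You instead split into lower semicontinuity (Fatou along a convergent subsequence of minimizers) and upper semicontinuity via the convex combination $q_\varepsilon=(1-\varepsilon)\hat q+\varepsilon e$, using concavity of the logarithm and dominated convergence, with integrability of $P\log\hat Q$ supplied by finiteness of the optimal value. Your variant is more self-contained---it never invokes the optimality system---and it is all that the downstream Theorem~\ref{thm:W2qcont} actually needs; the paper's two-sided estimate gives in exchange a uniform $\delta$--$\tilde\rho$ modulus valid on a whole ball of parameters. Two cosmetic points: the coefficient of the logarithmic term in your coercivity bound should be $\|P\|_1=p_\nollb$ rather than $c_\nollb$ (immaterial, since both are bounded on $K$), and for the a.e.\ pointwise convergence $P\log Q_\varepsilon\to P\log\hat Q$ you should note explicitly that the set $\{P>0,\ \hat Q=0\}$ has measure zero precisely because $P\log\hat Q\in L^1(\mT^d)$.
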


\begin{proof}
The proof follows along the lines of Lemma 7.2 and  Proposition 7.4 in \cite{karlsson2015themultidimensional}. Let $(\PARb)\in\mathcal{P}$ be arbitrary and let 
\begin{align*}
&\tilde B_\rho(\PARb):=B_\rho(\cb) \times \left(B_\rho(\pb)\cap\PPC\right) \times B_\rho(\Wb),
\end{align*}
where $\rho>0$ is chosen so that $\tilde B_\rho(\PARb)\subset \mathcal{P}$, i.e., $\rho<\|\pb\|_2$ and $W>0$ for all $\|W-\Wb\|_F\leq \rho$.  First we will show that the minimizer $\hat q_{\PARa}$ of $\mathbb{J}_{\PARa}$ is bounded for all  $(\PARa)\in \tilde B_\rho(\PARb)$. To this end, note that by optimality
\[
\mathbb{J}_{\PARa}(\hat q_{\PARa})\le \mathbb{J}_{\PARa}(e)=\langle c, e \rangle - \int_{\mT^d} P \log 1\, \dm+ \frac{1}{2}\|e - e\|_{W}^2= c_\nollb ,
\]
and hence $\mathbb{J}_{c, p, W}(p)$ is bounded from above on the compact set $\tilde B_\rho(\PARb)$. Consequently, by using the same inequality as in the proof of \cite[Lemma~7.1]{karlsson2015themultidimensional}, we see that 
\[
c_\nollb\geq \mathbb{J}_{\PARa}(\hat q_{\PARa})\ge \langle c, \hat q_{\PARa} \rangle -  \|P\|_1 \log \|\hat Q_{\PARa}\|_\infty\, \dm+ \frac{1}{2}\|\hat q_{\PARa} - e\|_{W}^2.
\]
Due to norm equivalence between $\|Q\|_\infty$ and $\|q\|_W$, and since the quadratic term is dominating, the norm of $\hat q_{\PARa}$ is bounded in the set $\tilde B_\rho(\PARb)$.

Now, let $K\subset\mathcal{P}$ be compact. We want to show that $\hat q$ is bounded on $K$. Assume it is not. Then let $(\PARk)\in K$ be a sequence with $\|\hat q_k\|\to \infty$. Since $K$ is compact there is a converging subsequence $(\PARk)\to (\PARa)\in K$ with $\|\hat q_k\|\to \infty$. Since $(\PARa)\in K$ there is a $\rho>0$ such that $\tilde B_\rho(\PARa)\subset \mathcal{P}$. However, all but finitely many points $(\PARk)$ belong to $\tilde B_\rho(\PARa)$, and 
since $\hat q_k$ is bounded for all $(\PARk)\in \tilde B_\rho(\PARa)$, we cannot have $\|\hat q_k\|\to \infty$.

Next, let $(\PARc), (\PARd)\in \tilde B_\rho(\PARb)$ and let $\hat{q}_{1}, \hat{q}_2\in\bar{\mathfrak{P}}_+$ be the unique minimizers of $\mathbb{J}_{\PARc}$ and $\mathbb{J}_{\PARd}$, respectively. 
Choose a $q_0 \in \mathfrak{P}_+$ and note that $Q_0$ is strictly positive and bounded. By  optimality, 
\begin{subequations}\label{optimality}
\begin{eqnarray}
&\mathbb{J}_{\PARc}(\hat{q}_1)\leq \mathbb{J}_{\PARc}(\hat{q}_2+\varepsilon q_0) & \label{optimality1}\\
&\mathbb{J}_{\PARd}(\hat{q}_2)\leq \mathbb{J}_{\PARd}(\hat{q}_1+\varepsilon q_0) &\label{optimality2}
\end{eqnarray}
\end{subequations}
 for all $\varepsilon >0$. Hence, if  we can show that, for any $\delta >0$, there is an $\varepsilon>0$ and a $\tilde{\rho}>0$ such that 
\begin{subequations}\label{variation}
\begin{eqnarray}
&|\mathbb{J}_{\PARd}(\hat{q}_1+\varepsilon q_0) - \mathbb{J}_{\PARc}(\hat{q}_1)|\leq \delta & \label{variation1}\\
&|\mathbb{J}_{\PARc}(\hat{q}_2+\varepsilon q_0)-\mathbb{J}_{\PARd}(\hat{q}_2)|\leq \delta &\label{variation2}
\end{eqnarray}
\end{subequations}
hold whenever $\|\cc-\cd\|_2\le\tilde{\rho}$, $\|\pc-\pd\|_2\le\tilde{\rho}$ and $\|\Wc-\Wd\|_F\le\tilde{\rho}$, then this would imply that
\begin{displaymath}
\mathbb{J}_{\PARd}(\hat{q}_2)-\delta\leq \mathbb{J}_{\PARc}(\hat{q}_1)\leq \mathbb{J}_{\PARd}(\hat{q}_2)+\delta ,
\end{displaymath}
showing that the optimal value is continuous in $\PARc$. 
The lower bound is obtained by using \eqref{variation1} and \eqref{optimality2}, and the upper bound is obtained from \eqref{optimality1} and \eqref{variation2}. To prove \eqref{variation1}, 
we note that
\begin{align}
&|\mathbb{J}_{\PARd}(\hat{q}_1+\varepsilon q_0) - \mathbb{J}_{\PARc}(\hat{q}_1)|\notag\\
&\phantom{xx}=\bigg|\langle \cd-\cc, \hat q_1\rangle+\langle \cd, \varepsilon q_0 \rangle - \int_{\mT^d} \Pc \log\left(1 + \frac{\varepsilon Q_0}{\hat{Q}_1}\right) \dm \notag\\
&\phantom{xxxx}-\int_{\mT^d} (\Pd-\Pc) \log\left(\hat{Q}_1+\varepsilon Q_0\right) \dm +\tfrac12\| \hat{q}_1+\varepsilon q_0 -e\|_{\Wd}^2  -\tfrac12\|\hat{q}_1 -e\|_{\Wc}^2\bigg| \notag\\
&\phantom{xx}\leq \|\cd-\cc\|_2\| \hat q_1\|_2+ \varepsilon \left( \langle \cd, q_0 \rangle + \int_{\mT^d} \Pc \frac{Q_0}{\hat{Q}_1} \dm \right)\notag\\ 
&\phantom{xxxx}+\|\Pd-\Pc\|_2 \|\log(\hat{Q}_1+\varepsilon Q_0)\|_\infty +\tfrac12\left|\| \hat{q}_1+\varepsilon q_0 -e\|_{\Wd}^2- \|\hat{q}_1 -e\|_{\Wd}^2\right|\notag\\
& \phantom{xxxx}  +\tfrac12\left|\|\hat{q}_1 -e\|_{\Wd}^2 -\|\hat{q}_1 -e\|_{\Wc}^2\right|. \label{eq:ugly}
\end{align}
Next we observe that $$0\le \int_{\mT^d} \Pc \frac{Q_0}{\hat{Q}_1} \dm =\langle \hat r_1-\hat c_1,q_0\rangle\le\langle c_1+\Wc(\hat q_1-e),q_0\rangle$$ by the KKT conditions \eqref{eq:opt_cond_relax} and the fact that $q_0\in\PP$, $\hat c_1\in \CPC$. Hence $\varepsilon$ can be selected small enough for  the second and fourth term in \eqref{eq:ugly} each to be bounded by $\delta/5$ for any $(\PARc), (\PARd)\in B_{\rho}(\PARb)$. Each of the remaining terms can now be bounded by $\delta/5$ by selecting $\tilde\rho$ sufficiently small.
 Hence \eqref{variation1} follows. This also proves \eqref{variation2}. 
\end{proof}

\begin{lemma}\label{lem:qhatbounded}
Let $\mathcal{P}_{\rm hard}$ be given  by \eqref{eq:Pcalhard} and $\mJ_{\PARa}$ by \eqref{eq:moddual}. Furthermore, let $\hat{q}:=\min_{q\in\PPC} \mJ_{\PARa}(q)$. Then for any compact $K\subset\mathcal{P}_{\rm hard}$, the corresponding set of optimal solutions $\hat{q}$ is bounded.
\end{lemma}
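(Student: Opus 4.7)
The plan is to mimic the coercivity argument used in the proof of Theorem~\ref{thm:hardcontraints} and render it uniform over $K$. Optimality of $\hat q_{\PARa}$ first yields the upper bound $\mJ_{\PARa}(\hat q_{\PARa}) \leq \mJ_{\PARa}(e) = c_\nollb$ (since $e \in \PPC$ with $E \equiv 1$ and $\|e-e\|_W = 0$), and since $c_\nollb$ is continuous in $(\PARa)$ it is uniformly bounded above on $K$. Decomposing $\mJ_{\PARa}(q) = h_{\PARa}(q) + c_\nollb - \int_{\mT^d} P \log Q\, \dm$ with $h_{\PARa}(q) := \langle c, q-e\rangle + \|q-e\|_W$ reduces the task to proving coercivity of $h_{\PARa}$ in $q$, uniformly in $(\PARa) \in K$.

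For each fixed $(\PARa) \in \mathcal{P}_{\rm hard}$, the proof of Theorem~\ref{thm:hardcontraints} already supplies $h_{\PARa} > 0$ on the compact set $S := \{q \in \PPC \mid \|q-e\|_\infty = 1\}$. Using the positive $1$-homogeneity $h_{\PARa}(e + \lambda v) = \lambda h_{\PARa}(e + v)$ for $\lambda > 0$, together with the fact that for $q \in \PPC$ with $\lambda := \|q-e\|_\infty \geq 1$ the element $e + (q-e)/\lambda = (1-1/\lambda)e + q/\lambda$ is a convex combination of points of $\PPC$ and hence lies in $S$, I would conclude
\[
h_{\PARa}(q) \geq \varepsilon_{\PARa} \|q-e\|_\infty \quad \text{whenever } q \in \PPC,\ \|q-e\|_\infty \geq 1,
\]
where $\varepsilon_{\PARa} := \min_{q \in S} h_{\PARa}(q) > 0$.

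To upgrade to a uniform bound on $K$, I would appeal to the standard fact that the minimum of a jointly continuous function over a fixed compact set depends continuously on its parameters, so $(\PARa) \mapsto \varepsilon_{\PARa}$ is continuous on $\mathcal{P}_{\rm hard}$ and attains a strictly positive minimum $\varepsilon_0 > 0$ on $K$. Combining the upper bound with the lower bound via $\log Q \leq \log \|Q\|_\infty$, $\|Q\|_\infty \leq |\Lambda|\|q\|_\infty$ (from \cite[Lem.~A.1]{ringh2015multidimensional}), and $\|q\|_\infty \leq \|q-e\|_\infty + 1$, one obtains for $\|\hat q_{\PARa} - e\|_\infty \geq 1$ that
\[
\varepsilon_0 \|\hat q_{\PARa} - e\|_\infty \leq p_\nollb \log\bigl(|\Lambda|(\|\hat q_{\PARa}-e\|_\infty + 1)\bigr) \leq P_0 \log\bigl(|\Lambda|(\|\hat q_{\PARa}-e\|_\infty + 1)\bigr),
\]
with $P_0 := \sup_K p_\nollb$ finite by continuity and nonnegativity of $P$. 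Since linear growth dominates logarithmic, $\|\hat q_{\PARa} - e\|_\infty$ is uniformly bounded on $K$ (the case $\|\hat q_{\PARa}-e\|_\infty < 1$ being trivial), and hence so is $\|\hat q_{\PARa}\|$.

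The principal obstacle is establishing the uniform positivity $\varepsilon_0 > 0$: the pointwise strict positivity $h_{\PARa} > 0$ on $S$ in Theorem~\ref{thm:hardcontraints} relies on $\SW \cap \CP \ne \emptyset$ to guarantee that $\mJ_{\PARa}$ is bounded below, and one must verify that this positivity propagates continuously in $(\PARa)$ over all of $\mathcal{P}_{\rm hard}$ so that compactness of $K$ delivers a uniform positive lower bound. All other estimates are routine adaptations of those already present in the proof of Theorem~\ref{thm:hardcontraints}.
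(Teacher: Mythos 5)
Your proof is correct and follows essentially the same route as the paper's: the upper bound $\mJ_{\PARa}(\hat q)\le \mJ_{\PARa}(e)=c_\nollb$, the decomposition via $h_{\PARa}(q)=\langle c,q-e\rangle+\|q-e\|_W$, a uniform positive lower bound for $h_{\PARa}$ on $\{q\in\PPC \mid \|q-e\|_\infty=1\}$ obtained from joint continuity and compactness (the paper minimizes over the product of this sphere with a parameter ball, which is the same thing), and the linear-versus-logarithmic growth comparison. The only harmless differences are that you work directly on $K$ where the paper first argues on closed balls $\tilde B_\rho(\PARb)$ and then invokes a subsequence argument, and that you are slightly more explicit about the homogeneity step; the ``principal obstacle'' you flag is resolved exactly as you sketch, which is also how the paper handles it.
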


\begin{proof}
The proof follows closely the proof of the corresponding part of Lemma~\ref{lem:W2Jcont}. Let $(\PARb)\in\mathcal{P}_{\rm hard}$ be arbitrary and let 
\begin{align*}
&\tilde B_\rho(\PARb):=B_\rho(\cb) \times \left(B_\rho(\pb)\cap\PPC\right) \times B_\rho(\Wb),
\end{align*}
where $\rho>0$ is chosen so that $\tilde B_\rho(\PARb)\subset \mathcal{P}_{\rm hard}$. 
To see that the minimizer $\hat q_{\PARa}$ of $\mathbb{J}_{\PARa}$ is bounded for all  $(\PARa)\in \tilde B_\rho(\PARb)$, first note that by optimality
\[
\mathbb{J}_{\PARa}(\hat q_{\PARa})\le \mathbb{J}_{\PARa}(e)=\langle c, e \rangle - \int_{\mT^d} P \log 1\, \dm + \|e - e\|_{W}= c_\nollb ,
\]
and hence $\mathbb{J}_{c, p, W}(p)$ is bounded from above on the compact set $\tilde B_\rho(\PARb)$.

Now let $h(q):= \langle c, q-e\rangle +\|q-e\|_W$, as in the proof of Theorem ~\ref{thm:hardcontraints}. Following the same line of argument as in that proof, we see that $h(q)>0$  for all $q\in\PPC$ and $(\PARa)\in \tilde B_\rho(\PARb)$. Since $h$ is continuous in the arguments $(q, \PARa)$, it has a minimum $\varepsilon>0$ on the compact set of tuples $(q, \PARa)$ such that $q\in\PPC\setminus\{0\}, \|q-e\|_\infty =1,$ and $(\PARa)\in \tilde B_\rho(\PARb)$ hold. Thus the second half of inequality \eqref{eq:Jhard_ineq} still holds, i.e., 
\begin{equation}
\mathbb{J}_{\PARa}(q) \geq  \frac{\varepsilon}{|\Lambda |} \|Q\|_\infty -\int_{\mathbb{T}^d}\! \!  P \log \|Q\|_\infty \dm  - \varepsilon\|e\|_\infty \,
\end{equation}
for all $q$. This is true in particular for $\hat q_{\PARa}$, thus 
\[
c_\nollb\geq \mathbb{J}_{\PARa}(\hat q_{\PARa})\ge 
\frac{\varepsilon}{|\Lambda |} \|Q_{\PARa}\|_\infty -\int_{\mathbb{T}^d}\! \!  P \log \|Q_{\PARa}\|_\infty \dm  - \varepsilon\|e\|_\infty. 
\]
Since the linear growth dominates the logarithmic growth, the norm of $\hat q_{\PARa}$ is bounded on the set $\tilde B_\rho(\PARb)$. The proof now follows {\em verbatim} from the argument in the second paragraph in the proof of Lemma~\ref{lem:W2Jcont}.
\end{proof}

\begin{proof}[{Proof of Theorem~\ref{thm:W2Whom2}}]
This is a modification of the proof of Theorem~\ref{thm:W2Whom}, again utilizing \cite[Lemma 2.3]{byrnes2007interior},  where we replace the map $f$ defined by 
$\mathcal{W}\ni W_{\rm hard}\mapsto W_{\rm soft}\in \{W\mid W>0\}$ and redefine it with the map ${\rm int}(\mathcal{P}_{\rm hard})\ni (c,p,W_{\rm hard})\mapsto (c,p,W_{\rm soft})\in {\rm int}(\mathcal{P})$.
To show that $f$ is a homeomorphism we need to show that the map is proper. 
To this end, we take a compact set $K\subset{\rm int}(\mathcal{P}_{\rm hard})$ and show that $f^{-1}(K)$ is also compact. 
Again, there are two ways this could fail. First, the preimage could contain a singular semidefinite matrix. However this is impossible by \eqref{eq:Wsoft2Whard}, since $\|\hat{q}\|_\infty$ is bounded for $(c,p,W_{\text{hard}})\in K$ (Lemma~\ref{lem:qhatbounded}) and a nonzero scaling of a singular matrix cannot be nonsingular. 
Secondly, $\|W_{\rm soft}\|_F$ could tend to infinity. However, this is also impossible. To see this, we first show that there is a $\kappa >0$ such that $\|p-r\|_{W_{\rm hard}^{-1}}\geq \kappa$ for all $r\in\mathfrak{S}_{c,W_{\rm hard}}$ and all $(c,p,W_{\rm hard})\in K$. 
Again, using the triangle inequality $\|p-r\|_{W_{\rm hard}^{-1}}\geq\|p-c\|_{W_{\rm hard}^{-1}}-\|c-r\|_{W_{\rm hard}^{-1}}$, we observe that the minimum of $\|p-r\|_{W_{\rm hard}^{-1}}$ over all $(c,p,W_{\rm hard})\in K$ and $r$ satisfying the constraint $\|r-c\|_{W_{\rm hard}^{-1}}\leq 1$ is bounded by 
\begin{equation*}
\kappa:= \min_{(c,p,W_{\rm hard})\in K} \|p-c\|_{W_{\rm hard}^{-1}} -1.
\end{equation*}
 The minimum is attained,  as $K$ is compact, and positive, since  $p\not\in\bigcup_{(c,W_{\rm hard})\in K}\ScW$.
The remaining part of the proof now follows with minor modifications from the proof of Theorem~\ref{thm:W2Whom} by noting that $\hat q$ is bounded away from $e$, and hence the preimage $f^{-1}(K)$ is bounded. Therefore the limit of a sequence in the preimage must belong to $f^{-1}(K)$, and hence $f^{-1}(K)$ is compact as claimed.
\end{proof}

\bibliographystyle{siam}
\bibliography{ref}

\end{document}